\newcommand{\R}{\mathbb{R}}
\newcommand{\F}{\mathcal{F}}
\newcommand{\N}{\mathcal{N}}
\newcommand{\E}{\mathbb{E}}
\newcommand{\C}{\mathcal{C}}
\newcommand{\OH}{\mathcal{O}}
\newcommand{\PR}{\mathbb{P}}
\newcommand{\ST}{\mathcal{S}}
\DeclarePairedDelimiter\floor{\lfloor}{\rfloor}
\def\1{1\kern-.20em {\rm l}}
\newtheorem{theorem}{Theorem}[section]
\newtheorem{corollary}[theorem]{Corollary}
\newtheorem{definition}[theorem]{Definition}
\newtheorem{lemma}[theorem]{Lemma}
\newtheorem{proposition}[theorem]{Proposition}
\newtheorem{remark}[theorem]{Remark}
\newenvironment{proof}[1][Proof]{\noindent\textbf{#1.} }{\ \rule{0.5em}{0.5em}}
\numberwithin{equation}{section}
\numberwithin{equation}{section}
\begin{document}


\title{\sf 
%
%
Generalized regression operator estimation for continuous time functional data processes with missing at random response
}

\author{{ \sc Mohamed Chaouch}$^{1}$ ~~ and~~ {\sc Na\^amane La\"ib}$^{2}$ \\
$^{1}$ Department of Mathematics, Statistics, and Physics.\\
 Qatar University, Qatar\\
e-mail: mchaouch@qu.edu.qa\\
$^{2}$ CY Cergy Paris Univerist\'e, Laboratoire AGM, UMR 8088 du CNRS\\
F-95000 Cergy, France. \\
e-mail:  naamane.laib@cyu.fr,  naamane.laib@sorbonne-univeriste.fr\\
}
\maketitle
\begin{abstract}
In this paper,  we are interested in  nonparametric
 kernel estimation of a generalized regression function, including conditional cumulative distribution and  conditional quantile  functions,  
based on an incomplete sample $(X_t, Y_t, \zeta_t)_{t\in \mathbb{ R}^+}$ copies of a {\it continuous-time  stationary ergodic process} $(X, Y, \zeta)$.
The predictor $X$ is valued in some {\it infinite-dimensional} space, whereas the real-valued  process  $Y$ is observed when $\zeta= 1$ and   missing whenever $\zeta = 0$. 
Pointwise and uniform consistency (with rates)
 of these estimators  as well as  a central limit theorem  are established.  
Conditional bias  and  asymptotic quadratic error are also provided. 
Asymptotic and bootstrap-based confidence intervals for the generalized regression function are also discussed.  A first simulation study is performed to compare the discrete-time to the continuous-time estimations. A second simulation is also conducted to discuss the selection of the optimal sampling mesh in the continuous-time case. Finally, it is worth noting that our results are stated under {\it ergodic} assumption without assuming any classical mixing conditions.

\bigskip 
\noindent {\bf Keywords}: \vspace{1mm} Asymptotic quadratic error, continuous time ergodic processes, confidence intervals, exchangeable bootstrap, functional data, generalized regression, missing at random.

\noindent{\bf Subject Classifications:}  60F10, 62G07, 62F05,
62H15.
\end{abstract}

\section{Introduction}\label{intro}

Let $({\cal E}, d)$ be  an {\it infinite-dimensional} space equipped with a semi-metric $d(\cdot, \cdot)$.  We consider some 
${\cal E}\times \mathbb{R}\times  \{0,  1\}$-valued stationary and ergodic continuous  time process
$\left\{ Z_t=(X_t, Y_t, \zeta_t), \ {t\in \mathbb{R}^+}\right\}$
defined on a probability space $(\Omega, {\cal A}, \mathbb{P})$
and observed at any time $t\in[0, T ]$, where $\zeta$ is an indicator process taking values zero or one at any instant $t.$ 
For any $y$ in a compact set $S\subset \mathbb{R}$,  let $\psi_y(\cdot)$  be a real valued Borel
function defined on $S\times \mathbb{R}$.  The generalized regression function of 
$\psi_y(Y)$ given $X=x$ is defined by
$m_\psi(x,y):=\mathbb{E}(\psi_y( Y_t)| X_t=x)=\mathbb{E}(\psi(y, Y_t)| X_t=x)$
which is supposed to exist for any $x\in {\cal E}$  and independent of $t$.

Here, the process $Z$ has the same distribution as $(X, Y, \zeta)$, where   $Y$ is a real integrable process and     $X$ is a continuous time process valued in the functional space ${\cal E}$. This 
 means that, for any fixed time $t=t_0$, $X_{t_0} \in {\cal E}$. 
 Therefore, if, for instance, we take ${\cal E}:=L^2[0,1]$  the space of square integrable functions defined on $[0, 1]$, then for any fixed $t_0$, the process  $X_{0}:=\{X_{t_0}(s): s\in [0,1]\}$   describes a curve. 
 In this paper, we assume that    the response variable $Y$ is subject to the {\it missing at random} mechanism (MAR). This means that for an available observed sample $(X_t, Y_t, \zeta_t)_{0\leq t \leq T}$,  $X_t$ is completely observed, whereas $\zeta_t =1$ if $Y_t$ is observed at time $t$ and $\zeta_t=0$ otherwise. 
 The random variables $\zeta$ and $Y$ are supposed to be conditionally independent given $X$, that is $\mathbb{P}(\zeta=1 | X=x, Y=y) = \mathbb{P}(\zeta=1 | X=x) := p(x)$ almost surely (a.s.).  The MAR phenomena of the response variable may occur in several situations. For instance, in survey sampling studies the  non-response is an increasingly common problem, where the missing response reaches rates of $25\%$  to $30\%$ or even higher (see, e.g., \cite{Sikov18}). In such case the missing data become a real source of bias in survey sampling estimation.  Another example where the response may be subject to the AMR phenomena is  the household electricity consumption monitoring. Indeed, the real time collection of intraday electricity consumption is now possible after the deployment of smart meters at the household level. The transmission of the information from the smart meter towards the information system goes usually through WIFI or optical fiber networks which are significantly dependent on the weather conditions. Therefore, a response variable such as the daily total electricity consumption might be subject to missing at random mechanism due to bad weather conditions. 


 The statistical  analysis involving missing data,
 based on an incomplete discrete sample $(X_i, Y_i, \zeta_i)_{1\leq i\leq n}$ where the data are assumed independent and identically distributed (i.i.d), 
has been considered by several authors,  see for instance \cite{Ling15}, 
\cite{Cheng94}, \cite{LR02}, \cite{N03}, \cite{T06}, \cite{Liang07}, \cite{E11}, \cite{CLL16}, and \cite{C09}. 
 However, less attention has been given to the case when the covariate is infinite dimensional and the response is missing at random. One can cite, for instance,  \cite{F13}, where the authors considered the estimation of the regression function based on an i.i.d.  random sample, and \cite{Ling15} extended  their work   to discrete time ergodic processes.

 
The literature on the estimation of the regression function based on a completely observed sample $(X_t, Y_t)_{t\geq 0}$ copies of a strongly mixing  {\it continuous time  stationary  process} $(X, Y) \in \mathbb{R}^d\times \mathbb{R}^{d^\prime}$ is very extensive.
 %
 One may refer to the monograph  by \cite{B98} and the references theirin. Some of these results are extended by 
 \cite{DL14} and  \cite{B17}  to  the case where the underlying real-valued process is stationary and ergodic.
 \cite{ChaouchLaib2019}  have obtained an explicit upper bound of the asymptotic mean square error (AMSE) of kernel regression estimator when the data are sampled from a real-valued continuous time process with a missing at random response. Some of these results are extended   to the case  where $X$  is valued in an infinite-dimensional space and  $Y$  is a  real-valued completely observed. Under the $\alpha$-mixing condition \cite{Maillot2008} established   the 
  convergence  with rates of the regression operator, whereas 
  \cite{CM14}  obtained a superoptimal mean square convergence  rate 
  of  
  the MSE of regression function for continuous functional  time process with irregular paths.

This paper aims to extend \cite{Maillot2008} and  \cite{CM14} work at several levels. First, we suppose that the continuous time process satisfies an ergodic assumption rather than an $\alpha$-mixing one. Therefore, the dependence condition we consider is more general and involves several processes which do not satisfy the mixing property. On the other hand, this paper extends results established in \cite{Ling15} in the context of discrete time functional data processes to the continuous time framework. Indeed, given $(X_t, Y_t, \zeta_t)_{t\in \mathbb{ R}^+}$ copies of a continuous-time  stationary ergodic process $(X, Y, \zeta)$, we estimate the general operator $m_\psi(x,y)$ which  including conditional distribution function and conditional quantiles. It is worth noting that such extension is not obvious since it requires an appropriate definition of $\sigma$-fields adapted to continuous time context. Such adaptation  is  crucial when using martingale difference tools to establish asymptotic properties of the estimator. Second, the response variable considered here is affected by the MAR mechanism and therefore is not completely observed as in \cite{Maillot2008}. Moreover, in contrast to \cite{Maillot2008} and  \cite{CM14}, we do not limit our study to the mean square convergence but we provide a more exhaustive inference on the regression operator estimator including pointwise and uniform almost sure convergence rate, identification of the limiting distribution of our estimator and provide two methods to build confidence intervals (central limit theorem and   the  bootstrap procedure). 
  
The rest of this paper  is organised as follows. In Section \ref{sec2} we present the framework adapted to  continuous time ergodic  processes  and introduce assumptions needed for establishing asymptotic results.  The main asymptotic properties of the estimator are discussed in Section \ref{sec3}. An illustration of the performance of the proposed estimator is discussed through simulated data in Section \ref{sec4}. Section \ref{sec5} discusses an application to conditional quantiles. Finally technical proofs are given in Section \ref{secProofs}.

 \section{Framework and  assumptions}\label{sec2}
To define the framework of our study, we need to introduce  some definitions. 
 Let $X=(X_t)_{t\in [0, \infty)}$ be a continuous time process defined on a measurable probability space $(\Omega, {\cal F}, \mathbb{P})$ and observed at any time $t\in[0, T]$.

\begin{definition}
Let $\tau$ be a nonnegative real number,  a measurable set $ A$ is $\tau$-invariant if $T^\tau(A)=A$ for any $\tau$-shift transformation $T^\tau$, i.e. $(T^\tau(x))_\lambda=x_{s+\lambda}$.
The process $X$ is said to be  $\tau$-ergodic if for any $\tau$-invariant set $ A$, $\mathbb{P}( A)=\mathbb{P}^2(A)$.  It has the ergodic property if there exists an invariant distribution $F(\cdot)$, such that for any measurable function $h(\cdot)$ and a random variable $\xi$ distributed  as $F(\cdot)$  satisfying  $\mathbb{E}(|h(\xi)|)<\infty$, we have
\begin{eqnarray}\label{ergdicit1}
 \lim_{T\to \infty}\frac{1}{T}\int_{-\infty}^{\infty}h(X_t)dt= \mathbb{E}(h(\xi)) \quad \mbox{almost surely} \ \mbox{(a.s.).}
 \end{eqnarray}
\end{definition}

From now on,  we will be working on the filtered probability space   $\left(\Omega, {\cal F}, \{ {\cal F}_t, t\geq 0\}, \mathbb{P}\right)$. For a positive real number $\delta$ such that $n=\frac{T}{\delta} \in \mathbb{N}$ and $j\in \mathbb{N}\cap[1,n]$, consider the $\delta$- partition $(T_j=j\delta)_{1\leq j \leq n}$ of the interval $[0, T]$. Furthermore, for $t>0$ and $1\leq j \leq n$, we define  the following  $\sigma$-fields:
\begin{eqnarray*}
	\F_{t-\delta} &:=& \sigma((X_s, Y_s, \zeta_s): 0 \leq s < t-\delta), \quad  \F_j = \sigma((X_s, Y_s, \zeta_s), 0\leq s < T_j),\\
	{\cal S}^\ell_{t,\delta}&: =& \sigma((X_s, Y_s, \zeta_s); (X_r, (1-\ell)Y_r): 0\leq s < t, \quad t\leq r \leq t+\delta, \; \ell\in \{0, 1\}).\\
\end{eqnarray*}

\vskip -8mm

Whenever $s < 0$, $\F_s$ stands for the trivial $\sigma$-field. Here,   $\zeta_s$ is a  standard Bernoulli process (see Section below). Notice that,  for any $\delta>0$ and $t>0$,  ${\cal S}^0_{t, \delta} \subset {\cal S}^1_{t, \delta}$, ${\cal F}_{t-\delta} \subset {\cal S}^\ell_{t-\delta, \delta} \subset {\cal S}^\ell_{t, \delta},  
$    
$\text{and for any} \  j\geq 2 \ \text{such  that} \  T_{j-1}\leq t\leq T_j,  \quad 
{\cal F}_{j-2} \subseteq {\cal F}_{t-\delta}  \subset {\cal S}^\ell_{t, \delta}$.
%
%
%

\noindent The estimator $\widehat{m}_{\psi,T}(x,y)$  adapted to MAR response  may be defined, for any $\zeta\neq 0$  and  $\int_0^T \zeta_t \Delta_t(x) dt \neq 0$,  by
\begin{eqnarray}\label{estimator}
\widehat{m}_{\psi,T}(x,y) = \frac{\int_0^T \zeta_t \psi_y(Y_t) \Delta_t(x) dt}{\int_0^T \zeta_t \Delta_t(x) dt},
\end{eqnarray}
where 
$\Delta_t(x)= K\left( \frac{d(x, X_t)}{h_T}\right)$,  $K(\cdot)$ is a kernel density function, $h_T$ is the smoothing parameter tending to zero as $T$ goes to infinity.  Let 
$Z_1(x):= \int_0^\delta \Delta_t(x) dt$ and define the  conditional bias as  
\begin{eqnarray}\label{Bais}
B_T(x,y) :=  \frac{\overline{m}_{\psi,T,2}(x,y)}{\overline{m}_{\psi,T,1}(x,y)} - m_\psi(x,y):= C_T(x,y) - m_\psi(x,y).
\end{eqnarray}
where, for $j=1,2$,
\begin{eqnarray}\label{mbarTi-1}
\overline{m}_{\psi,T,i}(x,y) := \frac{1}{n\E(Z_1(x))} \int_0^T \E\left\{\zeta_t (\psi_y(Y_t))^{i-1} \Delta_t(x) | \F_{t-\delta} \right\} dt.
%
\end{eqnarray}

\bigskip
 Let us now introduce the assumptions under which we establish our asymptotic results.

\begin{itemize}
\item[(A1)] ({\bf Assumptions on the kernel function}). 
 Let $K$ is a nonnegative bounded kernel of class $\mathcal{C}^1$ over its support $[0,1]$ such that $K(1) >0$. The derivative $K^\prime$ exists on $[0,1]$ and satisfies the condition $K^\prime(v) <0,$ for all $v\in [0,1]$ and $|\int_0^1 (K^j)^\prime(v) dv| < \infty$ for $j=1,2.$
\item[(A2)] ({\bf Assumptions related to the continuous time functional ergodic processes})

$(i_0)$ Let $\alpha_0$ be a   nonnegative real number  and 
 $x\in \cal{E}$. Suppose, for any $0\leq s < t \leq T$ such that $t-s \leq \alpha_0$,  there exists a nonnegative continuous random function $f_{t,s}(x)$ a.s. bounded by a deterministic function $b_{t,\alpha_0}(x)$. 

Moreover, let $g_{t,s,x}(\cdot)$ be a random function defined on $\R$, $f(x)$ is a deterministic nonnegative bounded function and $\phi(\cdot)$ a nonnegative real function tending to zero (as its argument tends to 0), and assume that. 
\begin{itemize}
\item[(i)] $F_x(u):=\PR\left(d(x,X_t) \leq u \right) = \phi(u)f(x) + o(\phi(u))$ as $u\rightarrow 0.$
\item[(ii)] For any $0\leq s\leq t$, 
 $F_x^{{\cal F}_s}(u):=  \PR^{{\cal F}_s} (d(x,X_t)\leq u)= \PR(d(x,X_t)\leq u | \F_s) = \phi(u) f_{t,s}(x) + g_{t,s,x}(u)$ with $g_{t,s,x}(u) = o_{a.s.}(\phi(u))$ as $u\rightarrow 0,$ $g_{t,s,x}(u)/\phi(u)$ a.s. bounded and

 $T^{-1}\int_0^T g_{t, t-\delta,x}(u) dt = o_{a.s.}(\phi(u))$ as $T \rightarrow \infty$ and $u\rightarrow 0.$
\item[(iii)] For any $x\in \cal{E}$: 
$\lim_{T\rightarrow\infty}\frac{1}{T}\int_0^T f_{t, t-\delta}(x) dt = f(x)$, a.s.
\item[(iv)] There exists a nondecreasing bounded function $\tau_0$ such that, uniformly in $u\in[0,1]$,
$$\frac{\phi(hu)}{\phi(h)}=\tau_0(u) +o(1)\quad \mbox{as} \  h \downarrow 0  \quad \mbox{ and} \ \  \int_0^1 (K(v))' \tau_0(v) dv < \infty.$$
\item[(v)] $T^{-1}\int_0^T b_{t,\alpha_0}(x) dt \rightarrow D_{\alpha_0}(x)$ as $T\rightarrow\infty$ with $0<D_{\alpha_0}(x)<\infty$. 
\end{itemize}
\item[(A3)] ({\bf Local smoothness and continuity conditions})

Suppose, for any $(y, t)\in  S\times  [0,T]$
and $r$ such that  and $t\leq r \leq t+\delta$: 
\begin{itemize}
\item[$(i)$] $\E\left(\psi_y(Y_{r})| \ST_{t,\delta}^1 \right) = \E\left(\psi_y(Y_{r})| X_{r} \right) = m(X_r,y)$ a.s.
\item[$(ii)$] $\exists \beta>0$ and a constant $c>0$ such that, for any $(x',x'')\in {\cal E}^2$,

 $|m(x',y) - m_y(x'',y)| \leq c d^\beta(x', x'').$
\item[$(iii)$] For any $ \kappa_1\geq 2$, $\E\left( |\psi_y(Y_r)|^{\kappa_1} | \ST_{t,\delta}^1\right) = \E\left(|\psi_y(Y_r)|^{\kappa_1} | X_r \right)$ a.s. 

The functions  $W_{\kappa_1}(x,y) := \E\left(|\psi_y(Y)|^{\kappa_1} | X=x \right)$ and  $\overline{W}_{\kappa_1}(x,y) := \E\left(|\psi_y(Y) - m_\psi(x,y)|^{\kappa_1} | X=x \right)$ are continuous in the neighbourhood of $x$ and $\sup_{x\in {\cal C}, y\in S}|W_{\kappa_1}(x,y)|<\infty$ a.s.

\item[$(iv)$] $\E(\zeta_t | {\cal S}_{t,\delta}^0) = \E(\zeta_t | X_t)= p(X_t)$ a.s.  

For any $x\in \cal{E}$, $\sup_{\{x': d(x,x')\leq u\}} |p(x') - p(x)| = o(1)$ a.s. as $u\rightarrow 0.$ 

\item[$(iv^\prime)$]  For any $x\in {\cal E}$, $ \kappa_1\geq 2$, $\E(|\zeta_t|^{\kappa_1} | X_t=x): = U_\kappa(x)$, and 

 $\sup_{\{x': d(x,x')\leq u\}} |U_\kappa(x') - U_\kappa(x)| = o(1)$ a.s. as $u\rightarrow 0.$
\end{itemize}
\end{itemize}
For $j=1, 2$, define the following moments, which    are independent  of
$x \in {\cal E}$
\begin{eqnarray}\label{mbarTi}
M_j=K^{(j)}(1)-\int_0^1 \left( K^j \right)^\prime (u) \tau_0(u)du.
\end{eqnarray}

\subsection{ Comments on the  assumptions}

Condition (A1) is related to the choice of the kernel $K$, which  is very usual in nonparametric functional estimation. 
Condition (A2)(i)-(ii) reflects the ergodicity property assumed on the continuous time functional process. It plays an important role in studying the asymptotic properties of the estimator. The functions   $f_{t, s}$ and $f$  play the same role as the conditional and unconditional densities in finite dimensional case, whereas $\phi(u)$
characterizes the impact of the radius $u$ on the small ball probability  as  $u$ goes to $0$.  
Several examples to satisfy  these conditions are given in \cite{LL10} for discrete time functional data process. 

Some examples are also given to satisfy this condition in  \cite{DL14}  for the case where the observations $(X_t, Y_t)$ are sampled from  an {\it ergodic continuous times} process  taking values in $\mathbb{R}^d \times \mathbb{R}$ space. 

 Condition (A2)-(iii) involves the ergodic nature of the process where the random function $f_{t, t-\delta}$ belongs to the space of continuous functions ${\cal C}^0$. 
   Approaching the integral  $\int_0^{T} f_{t, t-\delta}(x) dt$ by its Riemann's sum:  $ T^{-1}\int_0^{T} f_{t, t-\delta}(x) dt \simeq n^{-1}\sum_{j=1}^{n} f_{j\delta, (j-1)\delta}(x)$, it is easy to prove that the sequence $(f_{j\delta, (j-1)\delta}(x))_{j\geq 1}$  is stationary and ergodic (see, \cite{DL14}). (A2)-(iv) is an usual condition when dealing with functional data. Assumption (A2)-(v) is a consequence of ergodic assumption. 
Condition (A3) is a Markov-type condition and characterizes the conditional moments of $\psi_y(Y)$. 
 It is satisfied when considering, for instance,   the model $\psi_y(Y_t)=m_\psi(X_{t})+\epsilon_t$ where $(\epsilon_t)$ is a square integrable process independent  of $(X_t)$ for  any $t\geq 0$.

 (A3)(iv) 
 assumes the continuity of the conditional probability of observing a missing response.  
 The moments $M_j$ are linked to  the small probability function $\tau_0$. One can refer to \cite{F07} for a  discussion on the choice of $\tau_0$, the Kernel $K$ and the positivity of $M_j$. 



\section{Main results}\label{sec3}
In this section we investigate several asymptotic properties of the continuous time generalized regression estimator. Some particular cases, related to specific choices of the function $\psi_y(\cdot)$, including the conditional cumulative distribution function and the conditional quantiles will also be discussed. 
\subsection{Almost sure consistency rates }\label{subsec2-1}
\subsubsection{Pointwise consistency}
%
The following theorem establishes an almost sure pointwise consistency rate of $\widehat{m}_{\psi,T}(x,y).$
\begin{theorem}\label{thm1}{\rm ({\bf Pointwise consistency})}
		Assume that (A1)-(A3) hold true and the following conditions are satisfied 	
	\begin{eqnarray}\label{cond}
	\lim_{T\rightarrow\infty}T\phi(h_T) =\infty \quad\quad \mbox{and} \quad\quad \lim_{T\rightarrow\infty}\frac{\log T}{T\phi(h_T)} = 0.
	\end{eqnarray}
	Then, we have for $T$ sufficiently large that   	
	\begin{eqnarray}\label{Th-PointwiseConv}
	\widehat{m}_{\psi,T}(x,y) - m_\psi(x,y) = \OH(h_T^\beta) + \OH\left(\sqrt{\frac{\log T}{T\phi(h_T)}} \right).
\end{eqnarray}
\end{theorem}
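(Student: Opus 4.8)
The plan is to treat $\widehat{m}_{\psi,T}$ as a ratio and split the error into a deterministic bias part and a stochastic part. Introduce the normalized functionals
$$
\widehat{m}_{\psi,T,i}(x,y)=\frac{1}{n\E(Z_1(x))}\int_0^T \zeta_t\,(\psi_y(Y_t))^{i-1}\,\Delta_t(x)\,dt,\qquad i=1,2,
$$
so that $\widehat{m}_{\psi,T}=\widehat{m}_{\psi,T,2}/\widehat{m}_{\psi,T,1}$ and the common factor $n\E(Z_1(x))$ cancels. Adding and subtracting the $\overline{m}_{\psi,T,i}$ of (\ref{mbarTi-1}) and using $\overline{m}_{\psi,T,2}-m_\psi\,\overline{m}_{\psi,T,1}=\overline{m}_{\psi,T,1}\,B_T(x,y)$ from (\ref{Bais}) yields
$$
\widehat{m}_{\psi,T}-m_\psi=\frac{(\widehat{m}_{\psi,T,2}-\overline{m}_{\psi,T,2})-m_\psi(\widehat{m}_{\psi,T,1}-\overline{m}_{\psi,T,1})}{\widehat{m}_{\psi,T,1}}+\frac{\overline{m}_{\psi,T,1}}{\widehat{m}_{\psi,T,1}}\,B_T(x,y).
$$
It then suffices to bound $B_T$ by $\OH(h_T^\beta)$, to show that $\widehat{m}_{\psi,T,1}$ stays a.s. bounded away from $0$ (with $\overline{m}_{\psi,T,1}$ bounded), and to bound each centered term $\widehat{m}_{\psi,T,i}-\overline{m}_{\psi,T,i}$ by $\OH(\sqrt{\log T/(T\phi(h_T))})$ a.s.

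First I would handle the bias. Writing $g_t=\zeta_t\psi_y(Y_t)\Delta_t(x)$ and using the tower property with the nestings $\F_{t-\delta}\subset\ST^0_{t,\delta}$ and $\F_{t-\delta}\subset\ST^1_{t,\delta}$, conditioning on $\ST^0_{t,\delta}$ and invoking (A3)(iv) replaces $\zeta_t$ by $p(X_t)$, while conditioning on $\ST^1_{t,\delta}$ and invoking (A3)(i) together with the MAR conditional independence replaces $\psi_y(Y_t)$ by $m_\psi(X_t,y)$. This gives
$$
\E\{\zeta_t\psi_y(Y_t)\Delta_t(x)\mid\F_{t-\delta}\}=\E\{p(X_t)\,m_\psi(X_t,y)\,\Delta_t(x)\mid\F_{t-\delta}\},
$$
and likewise $\E\{\zeta_t\Delta_t(x)\mid\F_{t-\delta}\}=\E\{p(X_t)\Delta_t(x)\mid\F_{t-\delta}\}$. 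Since $\Delta_t(x)\neq0$ forces $d(x,X_t)\le h_T$, the Hölder condition (A3)(ii) gives $|m_\psi(X_t,y)-m_\psi(x,y)|\le c\,h_T^\beta$ on that event, whence $|C_T(x,y)-m_\psi(x,y)|\le c\,h_T^\beta$, i.e. $B_T(x,y)=\OH(h_T^\beta)$.

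Next I would control the stochastic terms. For $t\in[T_{j-1},T_j]$ one has $t-\delta\in[T_{j-2},T_{j-1}]$ and $\F_{j-2}\subseteq\F_{t-\delta}$, so the block increments
$$
\xi_{j,i}=\int_{T_{j-1}}^{T_j}\Big[\zeta_t(\psi_y(Y_t))^{i-1}\Delta_t(x)-\E\{\zeta_t(\psi_y(Y_t))^{i-1}\Delta_t(x)\mid\F_{t-\delta}\}\Big]\,dt
$$
satisfy $\E(\xi_{j,i}\mid\F_{j-2})=0$, so that $\widehat{m}_{\psi,T,i}-\overline{m}_{\psi,T,i}=(n\E(Z_1(x)))^{-1}\sum_{j=1}^n\xi_{j,i}$ is, after splitting into even- and odd-indexed blocks, a sum of martingale differences. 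I would evaluate the conditional second moment of $\xi_{j,i}$ using (A2)(i)--(iv), the small-ball calibration $\phi(h_T)$ and the moment $M_2$ of (\ref{mbarTi}), together with the conditional-moment bounds (A3)(iii) and (iv$^\prime$) (handling the possibly unbounded $\psi_y(Y_t)$ by truncation), obtaining a conditional variance of order $\phi(h_T)$ per block. A Bernstein-type exponential inequality for martingale differences, applied with threshold $\varepsilon\asymp\sqrt{\log T/(T\phi(h_T))}$, then yields summable deviation probabilities; combined with Borel--Cantelli along a discretization $T_k\to\infty$ and a monotonicity argument to fill the gaps, this gives the claimed a.s. rate. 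The same computation with $i=1$, the ergodic limit (A2)(iii) and the continuity of $p$ in (A3)(iv) shows $\overline{m}_{\psi,T,1}\to p(x)>0$ a.s., and the deviation bound then keeps $\widehat{m}_{\psi,T,1}$ a.s. bounded away from $0$ for large $T$, controlling the denominators.

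The main obstacle is this stochastic term. The delicate points are: (a) securing the martingale-difference structure in continuous time, which forces the centering by $\F_{t-\delta}$ rather than block-by-block and requires the two-step relation $\E(\xi_{j,i}\mid\F_{j-2})=0$ to be exploited correctly; (b) the exact evaluation of the conditional variance of the integrated block increments, where the Riemann approximation $T^{-1}\int_0^T f_{t,t-\delta}(x)\,dt\simeq n^{-1}\sum_j f_{j\delta,(j-1)\delta}(x)$ must combine with (A2)(ii),(iv)--(v) to produce the $\phi(h_T)$ rate and the constants $M_j$ without spurious lower-order terms; and (c) the passage from deviation bounds at discrete times to the almost-sure statement for the continuous index $T\to\infty$. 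By contrast, once the $\sigma$-field reductions are in place, the bias bound $\OH(h_T^\beta)$ and the denominator analysis are comparatively routine.
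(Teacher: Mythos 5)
Your proposal follows essentially the same route as the paper's proof: the same ratio decomposition (your second term $\frac{\overline{m}_{\psi,T,1}}{\widehat{m}_{\psi,T,1}}B_T$ is just the paper's $B_T + R_T/\widehat{m}_{\psi,T,1}$ rewritten), the bias bounded by double conditioning on $\ST^0_{t,\delta}$, $\ST^1_{t,\delta}$ together with the H\"older condition (A3)(ii), the centered terms $\widehat{m}_{\psi,T,i}-\overline{m}_{\psi,T,i}$ controlled via block increments $\int_{T_{j-1}}^{T_j}[\cdots]\,dt$ centered at $\F_{t-\delta}$, a Bernstein-type exponential inequality for martingale differences and Borel--Cantelli, and the denominator shown to converge a.s. to $p(x)>0$. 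If anything, your even/odd splitting of the blocks is the more careful treatment: the blocks are $\F_j$-measurable but centered only at $\F_{j-2}$, a two-step structure that the paper glosses over by asserting they form a martingale difference sequence with respect to $(\F_{j-1})_{j\geq 1}$.
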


\begin{remark}
	{\rm 
(i) Theorem \ref{thm1} generalizes Theorem 1 of \cite{LL2011}
 established in the context of discrete time stationary ergodic process and Theorem 3.4 of  \cite{F-al05} stated under a mixing assumption
 with completely observed response and the support of $y$ is reduced to one point. 

 (iv) The function $\phi(h_T)$ can decrease to zero at an exponential rate, whenever   $h_T$  goes  to zero,  therefore  $h_T$ should be  chosen  decreases to zero   at a logarithmic rate.
}
\end{remark}

\subsubsection{Uniform consistency}
To establish the uniform consistency with rate of the regression operator, we need   additional   assumptions  that allow to express the  uniform convergence rate as a function of the entropy number. 
Let ${\cal C}$ and  $S$ be compact sets in ${\cal E}$ and $\mathbb{R}$, respectively.
Consider, for any $\epsilon >0$,
let the $\epsilon$-covering number of the compact set ${\cal C}$, say, $N_\epsilon:=\N(\epsilon, \C, d)$, which measures how full is the class $\C$, and defined as:
\begin{eqnarray*}
N_\epsilon&:=& \min \left\{n: \mbox{there exist} \; c_1, \dots, c_n \in \C\; \mbox{such that}\; \forall x\in \C \;\mbox{we can find } \right.\\
	&&\left. \quad\quad\quad\quad 1\leq i  \leq n \;\mbox{such that}\; d(x, c_i) < \epsilon \right\}.
\end{eqnarray*}

The finite set of points $c_1,c_2, \ldots,c_{N_\epsilon}$ is called
an $\epsilon$ -net of ${\cal C}$  if ${\cal C} \subset  \cup_{k=1}^{N_\epsilon} B(c_k, \epsilon)$, where  $B(c_k, \epsilon)$  is the ball, with respect to the topology induced by the semi-metric $d(\cdot,\cdot)$, centred at $c_k$ 
and of radius $\epsilon$.  The quantity 
$\varphi_{\cal C}(\epsilon)=\log(N_\epsilon)$ is called the Kolmogorov's $\epsilon$-entropy of the set  ${\cal C}$ that may be seen as a  tool to measure the complexity of the subset ${\cal C}$,  in the sense that high entropy means that a large amount of information is needed to describe  an element of ${\cal C}$ with an accuracy $\epsilon$.
Several examples of  $\varphi_{\cal C}(\epsilon)$ 
covering special cases of functional process   are  given in \cite{F10}
and \cite{LL2011}.
\begin{itemize}
	\item[(U0)]  Assume that  (A2) holds uniformly, in the following sense: 	
	\begin{description}	
		\item[(i)] (A2)(i) and (A2)(ii)  hold true  with the remaining term $o(\phi(u))$    is uniform  in $x$,
		\item[(ii)] 
		For any $x\in {\cal C}$, 
		$\lim_{T \rightarrow \infty}\sup_{x \in {\cal C}}\big| \frac1T \int_0^T f_{t,t-\delta}(x)dt-f(x)\big| =0 \ a.s.$
		\item[(iii)] $T^{-1}\int_0^T b_{t,\alpha_0}(x) dt \rightarrow D_{\alpha_0}(x)$ as $T\rightarrow\infty$ with $0<
		\sup_{x\in {\cal C}}D_{\alpha_0}(x)<\infty$. 
		\item[(iv))] 	
		$b_0<\inf_{x\in \C} f(x) \leq  \sup_{x\in \C} f(x) \ <\infty$ for some  nonegative real number $b_0$.
		\item[(v)] $\inf_{x\in C} p(x) > b_1$ for some  nonegative real number $b_1$.
	\end{description}
\end{itemize}
\begin{itemize}
	\item[(U1)] The kernel function $K$ satisfies the following conditions:
	\begin{itemize}
		\item[(i)] K is a H\"older function of order one  with a constant $a_K$,
		\item[(ii)] There exist two constants $a_2$ and $a_3$ such that $0<a_2 \leq K(x) \leq a_3 <\infty$, for any $x\in \cal{C}.$
	\end{itemize}
	\item[(U2)] For $1\leq \ell \leq 2$, the sequence of random variables $(\psi^\ell_y(Y_t))_{t}$ is ergodic and $\E\left(|\psi^\ell_y(Y_0)| \right) < \infty.$
	
	%
	%
	
	\item[(U3)]	There exist $c_\psi> 0$ and nonnegative real number  $\gamma$   such that 
	for any $y\in S$ 
	$$\sup_{y^\prime \in[y-u,y+u]\cap S}\vert \psi_y(Y_t)-\psi_{y^\prime}(Y_t)\vert
	\leq  c_\psi  u^{\gamma}.$$
	%
	%
	\item[(U4)]	 Let $T_n$ be the integer pat of $T$ and suppose   for  $T$ large enough that
	$$\frac{(\log T)^2}{T\phi(h_T)}	<\varphi_C(\epsilon_n)<\frac{T\phi(h_T)}{\log T} \qquad \text{with} \quad  \epsilon_n=\frac{ \log T_n}{T_n}.$$
\end{itemize}

Conditions (U0)  are  standard  in this context in order to get uniform consistency rate. Condition (U1) is usually used   when we deal with nonparametric estimation  for functional data, whereas (U2) requires 
to the existence of up to the order two moments.   Hypothesis  (U3) is a regularity condition upon the function $\psi_y(\cdot)$ which is necessary to
obtain a uniform result over the compact $S$.
Hypothesis (U4)  allows to  cover the subset  ${\cal C}$  with a finite number of balls and to  express the convergence rate in terms of the Kolmogorov's entropy of this subset. Similar condition  has been used in  \cite{F10}, where the authors have pointed out that
for a radius not too large, one requires the quantity $\varphi_C(\log T_n/T_n) $
is not too small and not too large. This condition  seems to satisfy this exigence, since  it  implies that   $\frac{\varphi_C(\log T_n/T_n)}{T\phi(h_T)} $ goes to $0$ for sufficiently large  $T$,  in addition   some  examples given in \cite{F10}
and \cite{LL2011} satisfy  (U4). 

\vskip 2mm
Theorem \ref{uniform} states uniform  consistency rate of the kernel regression estimator. It generalizes  
Theorems  2 in \cite{LL2011} established in the context of  discrete time stationary ergodic process with  completely observed response.	
\begin{theorem}\label{uniform} {\rm ({\bf Uniform consistency})}.
	Assume  (A1), (U0)-(U4), (A3) hold true.
	Moreover, suppose  Conditions (\ref{cond}) are satisfied and 
	%
	\begin{eqnarray}\label{CEntropy}
	\sum_{n\geq 1} n^{\gamma} \exp\{(1-\eta)\varphi_{\cal C}(\frac{\log n}{n})\}<\infty \quad \text{for some} \  \eta>0 \ \text{where} \ \gamma 
	\ \text{is as in } \ (U3).
	\end{eqnarray}	 
	Then, we have 
	\begin{eqnarray}\label{V-unifome1}
	\sup_{y\in S}\sup_{x\in \C} |\widehat{m}_{\psi,T}(x,y) - m_\psi(x,y)| = \OH_{a.s.}\left(h_T^\beta \right) + \OH_{a.s.}\left(\sqrt{\frac{\varphi_{\cal C} (\epsilon_T)}{T\phi(h_T)}} \right) \quad 
	\text{as} \quad T\to \infty.
	\end{eqnarray}
\end{theorem}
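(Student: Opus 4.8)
The plan is to reduce the uniform statement to the pointwise machinery behind Theorem~\ref{thm1}, combined with a covering argument over the compacts $\C$ and $S$. Writing $\widehat{m}_{\psi,T,i}(x,y)=\frac{1}{n\E(Z_1(x))}\int_0^T \zeta_t(\psi_y(Y_t))^{i-1}\Delta_t(x)\,dt$ for $i=1,2$, so that $\widehat{m}_{\psi,T}=\widehat{m}_{\psi,T,2}/\widehat{m}_{\psi,T,1}$, the first step is the standard ratio decomposition
\begin{equation*}
\widehat{m}_{\psi,T}-m_\psi=\frac{\big(\widehat{m}_{\psi,T,2}-\overline{m}_{\psi,T,2}\big)-m_\psi\big(\widehat{m}_{\psi,T,1}-\overline{m}_{\psi,T,1}\big)}{\widehat{m}_{\psi,T,1}}+\frac{\overline{m}_{\psi,T,1}}{\widehat{m}_{\psi,T,1}}\,B_T,
\end{equation*}
all quantities evaluated at $(x,y)$, with $\overline{m}_{\psi,T,i}$ the $\F_{t-\delta}$-conditional counterparts from \eqref{mbarTi-1} and $B_T$ the conditional bias \eqref{Bais}. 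Everything then rests on three uniform estimates: a uniform lower bound on $\widehat{m}_{\psi,T,1}$, a uniform bias bound $\sup_{x\in\C,y\in S}|B_T(x,y)|=\OH(h_T^\beta)$, and a uniform fluctuation bound $\sup_{x\in\C,y\in S}|\widehat{m}_{\psi,T,i}-\overline{m}_{\psi,T,i}|=\OH_{a.s.}\big(\sqrt{\varphi_{\cal C}(\epsilon_T)/(T\phi(h_T))}\big)$ for $i=1,2$.

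The bias is the easier of the two rates. Using the Markov-type identity (A3)(i) to replace $\E(\psi_y(Y_r)\mid\ST^1_{t,\delta})$ by $m(X_r,y)$, the H\"older continuity (A3)(ii), and the fact that $\Delta_t(x)$ vanishes once $d(x,X_t)>h_T$, I would bound $|C_T(x,y)-m_\psi(x,y)|$ by $c\,h_T^\beta$ with a constant made uniform over $\C$ by the uniform ergodicity hypotheses (U0)(i)--(iii) and uniform over $S$ since (A3)(ii) holds for every $y$. The uniform positivity of the denominator follows from (U0)(iv),(v), which keep $f$ and $p$ bounded away from zero so that $\overline{m}_{\psi,T,1}$ is asymptotically bounded below; combined with the fluctuation bound this transfers to $\widehat{m}_{\psi,T,1}$.

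The heart of the argument is the uniform fluctuation bound, which I would treat by covering. Cover $\C$ by the $N_{\epsilon_n}=\exp(\varphi_{\cal C}(\epsilon_n))$ balls $B(c_k,\epsilon_n)$ of the $\epsilon_n$-net with $\epsilon_n=\log T_n/T_n$, and cover $S$ by a finite grid, then write each supremum as the maximum over centres $(c_k,y_\ell)$ plus two oscillation terms. The oscillation in $x$ is controlled through the H\"older property (U1)(i) of $K$ and the small-ball regularity (A2)(iv), and the oscillation in $y$ through the modulus of continuity (U3); under (U4) and the choice of $\epsilon_n$ both are of smaller order than the target rate. At each fixed centre I would invoke the exponential (Bernstein/Fuk--Nagaev type) inequality for the centred continuous-time integral $\int_0^T[G_t-\E(G_t\mid\F_{t-\delta})]\,dt$ with $G_t=\zeta_t(\psi_y(Y_t))^{i-1}\Delta_t(x)$, which underlies the proof of Theorem~\ref{thm1}: discretizing along the $\delta$-partition $(T_j)$ turns this into a martingale-difference sum in $j$ whose conditional variance scales like $T\phi(h_T)$ by (A2) and the moment conditions (A3)(iii),(iv$'$). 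A union bound over the $N_{\epsilon_n}$ centres and the $S$-grid then produces a tail of the order appearing in \eqref{CEntropy}, whose summability gives the almost sure rate via Borel--Cantelli.

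The step I expect to be the main obstacle is making that exponential inequality genuinely uniform while respecting the continuous-time ergodic structure. One must verify that the martingale blocks $D_j=\int_{T_{j-1}}^{T_j}[G_t-\E(G_t\mid\F_{t-\delta})]\,dt$ have the claimed conditional-variance order \emph{after} the Riemann approximation of the integral (A2)(iii),(U0)(ii), that the variance constant is uniform over $\C\times S$, and that the $y$-dependence of $\psi_y$ does not spoil the centred-moment computation. Reconciling the blockwise martingale bound with the Riemann-sum approximation while keeping every constant independent of $x$ and $y$ is the delicate point; the remaining covering and Borel--Cantelli bookkeeping is routine once that inequality is in hand.
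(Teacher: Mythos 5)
Your proposal follows essentially the same route as the paper's own proof: the identical ratio decomposition into bias, denominator, and martingale-fluctuation terms (the paper's $B_T$, $\widehat{m}_{T,1}$, $Q_T$, $R_T$), a covering of $\C$ by the $\epsilon_T$-net with $\epsilon_T=\log T_n/T_n$ together with a finite grid on $S$, control of the $x$- and $y$-oscillations via (U1)(i) and (U3), the pointwise exponential martingale inequality (the paper's Lemma \ref{lem2} applied along the $\delta$-partition, yielding \eqref{Pointwise}) at each net point, and a union bound whose summability under \eqref{CEntropy} gives the rate by Borel--Cantelli. The delicate point you flag --- uniformity of the constants in the exponential inequality over $\C\times S$ --- is precisely what the paper's Lemma \ref{U1} addresses, so your plan is correct and matches the published argument.
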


\subsection{Asymptotic conditional bias  and risk evaluation}\label{subsec2-2}

Before evaluating the conditional bias, let us introduce some additional notations. Consider for $i=1,2$, the following assumptions:
\vskip 1mm

\noindent ({\bf BC1}) Let $d_t(x)=d(X_t, x)$ and suppose, for any  $t\geq 0$, that
$$\mathbb{E}\left[ m_\psi(X_t, y)-m_\psi(x,y) \ | d_t(x), \ {\cal F}_{t-\delta}\right]=\mathbb{E}\left[ m_\psi(X_t,y)-m_\psi(x,y) \ | 
d_t(x)\right]:=\Psi_y(d_t(x)),$$
 the function $\Psi_y$ is differentiable at $0$ and satisfies  $\Psi_y(0)=0$ and $\Psi_y^\prime(0) \neq 0$ for any $y\in \mathbb{R}$. 
%
This condition was introduced  in \cite{F07} and  used by \cite{LL10} to evaluate the conditional bais.  The introduction of $\psi_y(\cdot)$ allows to make an  integration with respect to the real  random variable $d_t(x)$  rather than with respect to the couple of random variables $(d_t(x), X_t)$,  where $X_t$ being functional continuous random variable.

The following Proposition gives  an asymptotic  expression of the conditional bias term, which  generalizes Proposition 1 of \cite{LL10} for discrete time estimator
to our setting. Its  proof is similar and therefore is omitted. 
\begin{proposition}[{\bf Conditional Bias}]\label{bias}
	Under assumptions (A1)-(A3), (BC1) and conditions  (\ref{cond}),  we have
	\begin{eqnarray*}
		B_T(x,y) &=& \frac{h_T \Psi_y^\prime(0)}{M_1} \left[K(1) - \int_0^1 (sK(s))^\prime \tau_0(s) ds + o_{a.s.}(1) \right] + \OH_{a.s.}\left( h_T\sqrt{\frac{\log T}{T\phi(h_T)}} \right). 
	\end{eqnarray*}
\end{proposition}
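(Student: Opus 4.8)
The plan is to exploit the ratio structure
$B_T(x,y) = C_T(x,y) - m_\psi(x,y) = \frac{\overline{m}_{\psi,T,2}(x,y) - m_\psi(x,y)\,\overline{m}_{\psi,T,1}(x,y)}{\overline{m}_{\psi,T,1}(x,y)}$
and to analyse the denominator and the numerator separately, following the discrete-time argument of \cite{LL10}. The engine throughout is a single small-ball computation: for the boundary kernel $K$ one integrates by parts the Stieltjes integral $\E\{K(d(x,X_t)/h_T)\mid \F_{t-\delta}\} = \int_{[0,h_T]} K(u/h_T)\,dF_x^{\F_{t-\delta}}(u)$, inserts the conditional small-ball expansion $F_x^{\F_{t-\delta}}(u)=\phi(u)f_{t,t-\delta}(x)+g_{t,t-\delta,x}(u)$ from (A2)(ii), and uses (A2)(iv) (so that $\tau_0(1)=1$) together with $K(1)>0$ to obtain $\E\{\Delta_t(x)\mid\F_{t-\delta}\} = \phi(h_T) f_{t,t-\delta}(x)\,M_1 + o_{a.s.}(\phi(h_T))$. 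The same computation applied to the weight $\tilde K(s):=sK(s)$ (which satisfies $\tilde K(0)=0$ and $\tilde K(1)=K(1)$) yields $\E\{d(x,X_t)\,\Delta_t(x)\mid\F_{t-\delta}\} = h_T\phi(h_T) f_{t,t-\delta}(x)\big[K(1)-\int_0^1 (sK(s))'\tau_0(s)\,ds\big] + o_{a.s.}(h_T\phi(h_T))$.

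For the denominator I would first reduce the MAR factor. Since $\Delta_t(x)$ is $\ST^0_{t,\delta}$-measurable and $\F_{t-\delta}\subset\ST^0_{t,\delta}$, the tower property and (A3)(iv) give $\E\{\zeta_t\Delta_t(x)\mid\F_{t-\delta}\} = \E\{p(X_t)\Delta_t(x)\mid\F_{t-\delta}\}$, and the continuity of $p$ in (A3)(iv) lets me replace $p(X_t)$ by $p(x)$ on the support $\{d(x,X_t)\le h_T\}$ up to $o_{a.s.}(1)$. Combining this with the small-ball expansion above, the ergodic average (A2)(iii) $\frac1T\int_0^T f_{t,t-\delta}(x)\,dt\to f(x)$, and the normalisation $n\E(Z_1(x)) = T\phi(h_T)f(x)M_1(1+o(1))$ obtained from the unconditional (A2)(i), I get $\overline{m}_{\psi,T,1}(x,y)\to p(x)$ a.s., the deviation from $p(x)$ being of order $\sqrt{\log T/(T\phi(h_T))}$ as supplied by the pointwise machinery behind Theorem \ref{thm1}.

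For the numerator I iterate the conditioning through both $\sigma$-fields. Conditioning first on $\ST^0_{t,\delta}$ (which carries $Y_t$) and using (A3)(iv) strips $\zeta_t$ into $p(X_t)$; conditioning next on $\ST^1_{t,\delta}$ and using the Markov-type identity (A3)(i) with $r=t$ replaces $\psi_y(Y_t)$ by $m(X_t,y)=m_\psi(X_t,y)$, so that $\E\{\zeta_t\psi_y(Y_t)\Delta_t(x)\mid\F_{t-\delta}\} = \E\{p(X_t)m_\psi(X_t,y)\Delta_t(x)\mid\F_{t-\delta}\}$. Writing $m_\psi(X_t,y)=m_\psi(x,y)+\big(m_\psi(X_t,y)-m_\psi(x,y)\big)$, the first piece reproduces $m_\psi(x,y)\,\overline{m}_{\psi,T,1}(x,y)$, which is exactly what is subtracted in the numerator of $B_T$. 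For the second piece, because $\Delta_t(x)$ depends only on $d_t(x)=d(X_t,x)$, condition (BC1) collapses the inner conditional expectation to $\Psi_y(d_t(x))$; the differentiability $\Psi_y(u)=\Psi_y'(0)u+o(u)$ together with the weighted small-ball integral above then produces, after dividing by $n\E(Z_1(x))$, the leading contribution $\frac{p(x)\,\Psi_y'(0)\,h_T}{M_1}\big[K(1)-\int_0^1(sK(s))'\tau_0(s)\,ds\big](1+o_{a.s.}(1))$.

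Finally I divide by $\overline{m}_{\psi,T,1}(x,y)$: since $\overline{m}_{\psi,T,1}(x,y)\to p(x)$, the factor $p(x)$ cancels and one is left with $\frac{h_T\Psi_y'(0)}{M_1}\big[K(1)-\int_0^1(sK(s))'\tau_0(s)\,ds+o_{a.s.}(1)\big]$, while the $O_{a.s.}(\sqrt{\log T/(T\phi(h_T))})$ fluctuation of $\overline{m}_{\psi,T,1}(x,y)$ about its limit $p(x)$ multiplies the $O(h_T)$ leading term to give precisely the stated remainder $\OH_{a.s.}(h_T\sqrt{\log T/(T\phi(h_T))})$. I expect the main obstacle to be the nested $\sigma$-field bookkeeping, namely justifying that the MAR reduction and the identity (A3)(i) may be applied in sequence after the tower steps, and verifying that the various error terms in the integration by parts are genuinely $o_{a.s.}(\phi(h_T))$ (respectively $o_{a.s.}(h_T\phi(h_T))$) uniformly enough, via the control of $g_{t,t-\delta,x}$ in (A2)(ii) and the regular-variation property (A2)(iv), to survive the ergodic time-average and the division by $n\E(Z_1(x))$.
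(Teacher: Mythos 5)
Your route is the one the paper itself intends: the authors omit the proof of Proposition \ref{bias}, saying only that it is ``similar'' to Proposition 1 of \cite{LL10}, and your reconstruction is precisely that argument transported to the MAR, continuous-time setting --- the ratio form $B_T=\widetilde{B}_T/\overline{m}_{T,1}$ of (\ref{btilde}), the double conditioning on $\ST^0_{t,\delta}$ and $\ST^1_{t,\delta}$ to replace $\zeta_t$ by $p(X_t)$ and $\psi_y(Y_t)$ by $m_\psi(X_t,y)$, the reduction via (BC1) with the Taylor expansion $\Psi_y(u)=\Psi_y'(0)u+o(u)$, the integration by parts with the weight $sK(s)$ producing the constant $K(1)-\int_0^1(sK(s))'\tau_0(s)\,ds$, and finally the ergodic averaging (A2)(iii) with the cancellation of $p(x)$ against the limit of $\overline{m}_{T,1}$. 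In outline this is correct and is exactly what the omitted proof would do.

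Two points deserve criticism. First, your justification of the second remainder term is wrong: you claim $\overline{m}_{T,1}(x)-p(x)=\OH_{a.s.}\bigl(\sqrt{\log T/(T\phi(h_T))}\bigr)$ ``as supplied by the pointwise machinery behind Theorem \ref{thm1}.'' That machinery (Lemma \ref{lem3}) controls only the martingale part $\widehat{m}_{T,1}-\overline{m}_{T,1}$; the deviation $\overline{m}_{T,1}-p(x)$ is an average of conditional expectations whose convergence, per (\ref{m1-bar}) in Lemma \ref{mhat}, rests on the rate-free ergodic limit (A2)(iii) and the rate-free continuity of $p$ in (A3)(iv), hence is only $o_{a.s.}(1)$. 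Since $B_T$ is built exclusively from the $\overline{m}$-quantities, no martingale fluctuation enters it at all. The slip is harmless to the conclusion --- all of your errors are $o_{a.s.}(1)\cdot h_T$ and are absorbed into the bracket $[\cdots+o_{a.s.}(1)]$, after which the displayed $\OH_{a.s.}\bigl(h_T\sqrt{\log T/(T\phi(h_T))}\bigr)$ term is vacuous slack --- but that term cannot be ``derived'' the way you state. Second, a genuine subtlety of the MAR extension that you (and the paper) pass over: (BC1) applies to $m_\psi(X_t,y)-m_\psi(x,y)$, not to the product $p(X_t)\bigl(m_\psi(X_t,y)-m_\psi(x,y)\bigr)$ which the $\ST^0_{t,\delta}$-conditioning actually produces. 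Splitting off $(p(X_t)-p(x))\bigl(m_\psi(X_t,y)-m_\psi(x,y)\bigr)$ and bounding it via (A3)(ii) and (A3)(iv) leaves a cross term of order $o_{a.s.}(h_T^\beta)$, which is negligible against the leading $h_T$ term only when $\beta\ge 1$, or under an explicit rate for the modulus of continuity of $p$ that (A3)(iv) does not provide. This gap is shared with the paper's omitted proof, but a careful write-up should state the extra requirement.
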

The next result gives an explicit expression of the asymptotic quadratic risk of the estimator $\widehat{m}_{\psi,T}(x,y)$.
\begin{theorem} [{\bf Quadratic risk }]\label{MSE}
	Suppose that assumptions (A1)-(A3) and condition (\ref{ergdicit1})  hold true. Then, we have, for a fixed $(x, y)\in {\cal E}\times \mathbb{R} $, whenever $p(x)>0$ and
	$f(x)>0$,  that
	%
		\begin{eqnarray*}
\mbox{\textsc{MSE}}(x,y):=\E\left[ \left( \widehat{m}_{\psi,T}(x,y) - m_\psi(x,y) \right)^2 \right]
=	A_1 h^2_T\left[ A_1+    \OH_{a.s.}\left(\sqrt{\frac{\log T}{T\phi(h_T)}} \right) \right]
+\frac{A_2(x,y)}{T\phi(h_T)},
	\end{eqnarray*}
%
$$A_1=\frac{\Psi_y^\prime(0) }{M_1} \left[K(1) - \int_0^1 (sK(s))^\prime \tau_0(s) ds +o(1) \right] \quad \text{and} \ 
A_2(x,y)=\frac{4 \left( W_2(x,y) +(m_\psi(x,y))^2 \right) M_2}{p(x) M_1^2 f(x)}.$$
\end{theorem}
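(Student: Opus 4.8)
The plan is to decompose the estimation error into a bias contribution and a stochastic contribution, and then compute the asymptotic mean square error by squaring, taking expectations, and controlling the cross term. Write $\widehat{m}_{\psi,T}(x,y)-m_\psi(x,y) = B_T(x,y) + \big(\widehat{m}_{\psi,T}(x,y)-C_T(x,y)\big)$, where $B_T$ is the conditional bias from \eqref{Bais} and the second piece is a centered stochastic term. Proposition \ref{bias} already supplies the exact asymptotic form of $B_T(x,y)$, giving $B_T(x,y)=A_1 h_T + \OH_{a.s.}\big(h_T\sqrt{\log T/(T\phi(h_T))}\big)$ with $A_1$ as in the statement; squaring this immediately produces the leading $A_1^2 h_T^2$ term and the claimed $h_T^2\,\OH_{a.s.}(\sqrt{\log T/(T\phi(h_T))})$ correction. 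So the real work is the variance of the stochastic term.

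For the stochastic part I would write the centered numerator and denominator explicitly and use a linearization. Setting $\widehat{g}_{\psi,T}(x,y)=\frac{1}{n\E(Z_1(x))}\int_0^T \zeta_t\psi_y(Y_t)\Delta_t(x)\,dt$ and $\widehat{f}_T(x)=\frac{1}{n\E(Z_1(x))}\int_0^T \zeta_t\Delta_t(x)\,dt$, the estimator is a ratio $\widehat{g}_{\psi,T}/\widehat{f}_T$, and the standard Taylor argument reduces $\widehat{m}_{\psi,T}-C_T$ to $\frac{1}{f_{\mathrm{eff}}(x)}\big(\widehat{g}_{\psi,T}-m_\psi \widehat{f}_T\big)$ up to negligible terms, where the effective denominator involves $p(x)f(x)$. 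The dominant variance then comes from the martingale-type fluctuation of $\int_0^T \zeta_t(\psi_y(Y_t)-m_\psi(x,y))\Delta_t(x)\,dt$. Using the $\delta$-partition and the $\sigma$-fields $\F_{t-\delta}$, $\ST^\ell_{t,\delta}$ introduced in Section \ref{sec2}, I would compute the conditional second moment via (A3)(i),(iii),(iv): conditioning on $\ST^0_{t,\delta}$ turns $\E(\zeta_t|\cdot)$ into $p(X_t)$ and conditioning on $X_t$ turns $\E((\psi_y(Y_t)-m_\psi)^2|\cdot)$ into $W_2$-type quantities. The kernel moment $M_2$ from \eqref{mbarTi} enters through $\E(\zeta_t^2\Delta_t^2(x)|\cdots)$, and the small-ball factor $\E(Z_1(x))\sim \delta\,\phi(h_T)f(x)M_1$ supplies the normalization, yielding the variance $\frac{A_2(x,y)}{T\phi(h_T)}$ with the stated constant. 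The factor $4$ and the appearance of $W_2(x,y)+m_\psi^2$ arise from expanding $(\psi_y(Y_t))^2$ relative to $m_\psi$; I would keep careful track of these through the second-moment computation.

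The main obstacle will be justifying the variance computation rigorously in continuous time under the ergodic (rather than mixing) hypothesis. Unlike the i.i.d. or mixing case, there is no direct covariance-summability bound, so I would pass to the Riemann approximation $T^{-1}\int_0^T(\cdot)\,dt \simeq n^{-1}\sum_{j=1}^n(\cdot)$ over the $\delta$-grid and exploit that the summands form a martingale-difference array with respect to the filtration $(\F_j)$, as flagged after (A2)(iii). The cross-covariance terms between different grid blocks then vanish by the tower property (the increments are conditionally centered given $\F_{t-\delta}$), leaving only the diagonal second-moment contribution; this is precisely where the careful choice of $\sigma$-fields $\ST^0_{t,\delta}\subset\ST^1_{t,\delta}$ and $\F_{j-2}\subseteq\F_{t-\delta}$ pays off. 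Controlling the error in replacing the conditional densities $f_{t,t-\delta}(x)$ by $f(x)$ uses (A2)(iii), and the off-diagonal terms of size $\OH(\alpha_0)$ are absorbed using (A2)($i_0$),(v) via the bound $b_{t,\alpha_0}(x)$ and $D_{\alpha_0}(x)$. Finally I would check that the cross term between bias and stochastic part is of smaller order than both $h_T^2$ and $1/(T\phi(h_T))$ under conditions \eqref{cond}, so that it is absorbed into the displayed remainder, and that the continuity assumptions (A3)(iii) together with $p(x)>0$, $f(x)>0$ guarantee $A_2(x,y)$ is finite.
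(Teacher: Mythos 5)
Your overall architecture coincides with the paper's own proof: the paper uses the decomposition (\ref{decomp}), Lemma \ref{mhat} to replace $\widehat{m}_{T,1}$ by $p(x)$, Proposition \ref{bias} for the squared bias, and a martingale-difference second-moment computation over the $\delta$-grid (Lemma \ref{esp.Q}) for the variance, with cross terms discarded via Cauchy--Schwarz. Up to that structural level your proposal matches.

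The genuine problem is your derivation of the variance constant: it would not produce the $A_2(x,y)$ of the statement, and your explanation of the factor $4$ and of the combination $W_2+m_\psi^2$ is incorrect. If, as you propose, you compute the second moment of the single centered martingale built from $\int_0^T \zeta_t\bigl(\psi_y(Y_t)-m_\psi(x,y)\bigr)\Delta_t(x)\,dt$, then the double conditioning gives the conditional second moment $\E\bigl[(\psi_y(Y)-m_\psi(x,y))^2\,\big|\,X=x\bigr]=W_2(x,y)-m_\psi^2(x,y)=\overline{W}_2(x,y)$: the cross term $-2m_\psi(x,y)\,\E[\psi_y(Y)\,|\,X=x]$ is negative, so "expanding $(\psi_y(Y_t))^2$ relative to $m_\psi$" yields $W_2-m_\psi^2$, not $W_2+m_\psi^2$. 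That $\overline{W}_2$-based quantity is exactly the variance appearing in the paper's asymptotic normality result (Lemma \ref{AsymNormalityQ}), and it is strictly smaller than $A_2$. The constant in Theorem \ref{MSE} comes instead from a cruder route, which the paper follows in Lemma \ref{esp.Q}: $Q_T$ is split into the two separate fluctuations $\widehat{m}_{T,2}-\overline{m}_{T,2}$ and $m_\psi\,(\widehat{m}_{T,1}-\overline{m}_{T,1})$, their product is discarded, and each second moment is bounded on its own (the centering step $L_{T,j}=\eta_j-\E[\eta_j\,|\,\F_{t-\delta}]$ costing the factor $4$ through $(a-b)^2\leq 2a^2+2b^2$ and Jensen); adding the two bounds is what produces $4\bigl(W_2(x,y)+m_\psi^2(x,y)\bigr)$. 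So either you reproduce that splitting to obtain the theorem's exact constant, or you must acknowledge that your computation proves a bound with $\overline{W}_2$ in place of $4(W_2+m_\psi^2)$ --- sharper, but not the statement as written. A second, smaller inaccuracy: the bias--variance cross term is not "of smaller order than both" $h_T^2$ and $1/(T\phi(h_T))$; by Cauchy--Schwarz it is of the order of their geometric mean $h_T/\sqrt{T\phi(h_T)}$, which is only bounded by the maximum of the two terms (and is of the same order as both when $h_T^2\asymp 1/(T\phi(h_T))$). The paper absorbs it exactly in that way, so the conclusion survives, but the justification you give for neglecting it would fail in that regime.
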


\begin{remark}. 
{\rm 
(i)	Notice that  for sufficiently large  $T$, the expression of MSE becomes $A_1^2 h^2_T+ \frac{A_2(x,y)}{T\phi(h_T)}$.  This result generalizes the one in \cite{ChaouchLaib2019}  established  in the framework of  real-valued continuous time processes where the bias term was of order $h_T^2$. Notice however that the  bias  term  obtained in \cite{ChaouchLaib2019}  is of  order $h_T^2$ which is smaller  than  $h_T$ given in Proposition \ref{bias}.  This increase in the bias term is because of the infinite dimensional characteristic of the functional space. 

(ii)   The mean squared error  can be used as a
theoretical  guidance to select the ``optimal" bandwidth
by  minimizing the quantity  $A_1^2 h^2_T+ \frac{A_2(x,y)}{T\phi(h_T)}$ with respect to $h_T$. The terms $A_1$ and $A_2(x,y)$ have unknown  explicit from; and should be replaced by their empirical consistent estimators:
 $\Psi_{y, T}^\prime(0), \ (M_{j,T})_{j=1,2}, \ \tau_{0, T}, \ p_T, W_{2,T}, \ f_T.$
  Notice that, $\Psi_y^\prime(0)$ may be viewed  as real regression function with response  variable $m_\psi(X,y)-m_\psi(x,y)$ and  covariable  $d(X,x)$, it may be then estimated by a  kernel regression estimate
   $\Psi^\prime_{y, T}(0)$ by replacing 
   $m_\psi(x,y)$  by  its estimator $\widehat{m}_{\psi,T}(X,y)$.
}
\end{remark}

\subsection{Asymptotic normality }\label{subsec2-3}

\begin{theorem}\label{AsymNormality} 
	Assume  conditions (A1)-(A3) are fulfilled and 
	\begin{eqnarray}\label{ConTheoremNor}
	\lim_{T \rightarrow \infty}T\phi(h_T)=+\infty, \ 
	h_T^\beta\sqrt{T\phi(h_T)}=o(1) \quad \text{and} \quad 
	h_T^\beta \log T^{1/2}=
	o(1) \quad \text{as} \ T \rightarrow \infty.
	\end{eqnarray}
	
	Then,  we have,  for any $(x, y)\in {\cal E}\times S$ such that $f(x)>0$
	
	$$\sqrt{T\phi(h_T)}(\widehat{m}_{\psi,T}(x,y)-m_\psi(x,y)) \quad \xrightarrow{d}\quad {\cal N}(0, \sigma^2(x, y)),$$
	where 
	\begin{eqnarray}\label{sigma1}
\sigma^2(x, y) \leq \frac1{f(x)}\frac{M_2}{M_1^2  p(x)}\overline{W}_2(x, y):=\frac1{f(x)}\widetilde{V}(x,y)  \quad 
	\text{as} \quad T\longrightarrow \infty,
		\end{eqnarray}
		and
	$$\overline{W}_2(x,y)= \mathbb{E}\left[ \left(\psi_y(Y)-m_\psi(x,y)\right)^2 \ | X=x  \right]. 
	$$		
\end{theorem}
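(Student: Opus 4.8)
The plan is to reduce the statement to a martingale central limit theorem for a triangular array indexed by the $\delta$-blocks $(T_{j-1},T_j]$ of $[0,T]$. First I would peel off the bias by writing
$$\widehat{m}_{\psi,T}(x,y)-m_\psi(x,y)=\big(\widehat{m}_{\psi,T}(x,y)-C_T(x,y)\big)+B_T(x,y),$$
with $C_T$ and $B_T$ as in (\ref{Bais}). Since (BC1) is not assumed here, I would bound $B_T$ crudely through the H\"older condition (A3)(ii), giving $|B_T(x,y)|=\OH(h_T^\beta)$ as in the proof of Theorem \ref{thm1}; the scaling $h_T^\beta\sqrt{T\phi(h_T)}=o(1)$ of (\ref{ConTheoremNor}) then yields $\sqrt{T\phi(h_T)}\,B_T(x,y)\to0$. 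Hence it suffices to establish the limit law for the stochastically centered quantity $\widehat{m}_{\psi,T}-C_T$.

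Next I would linearize the ratio. Let $\widehat{m}_{\psi,T,i}$ denote the empirical analogues of the $\overline{m}_{\psi,T,i}$ of (\ref{mbarTi-1}) (i.e.\ with the inner conditional expectation removed). Using $C_T=\overline{m}_{\psi,T,2}/\overline{m}_{\psi,T,1}$ one checks the identity
$$\widehat{m}_{\psi,T}(x,y)-C_T(x,y)=\frac{Q_T(x,y)}{\widehat{m}_{\psi,T,1}(x)},\qquad Q_T:=\big(\widehat{m}_{\psi,T,2}-\overline{m}_{\psi,T,2}\big)-C_T\big(\widehat{m}_{\psi,T,1}-\overline{m}_{\psi,T,1}\big).$$
A continuous-time ergodic-theorem argument (as in the consistency proofs, via (A2)(i),(iii) and (A3)(iv)) gives $\widehat{m}_{\psi,T,1}(x)\xrightarrow{P}p(x)>0$, so by Slutsky it remains to find the limit law of $\sqrt{T\phi(h_T)}\,Q_T$ and divide its variance by $p(x)^2$. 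Explicitly, the numerator is the conditionally centered integral
$$Q_T=\frac{1}{n\E(Z_1(x))}\int_0^T\Big\{\zeta_t\big(\psi_y(Y_t)-C_T\big)\Delta_t(x)-\E\big[\zeta_t(\psi_y(Y_t)-C_T)\Delta_t(x)\mid\F_{t-\delta}\big]\Big\}\,dt.$$

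I would then discretize over the partition and set, for $1\le j\le n$,
$$\eta_{j}:=\frac{\sqrt{T\phi(h_T)}}{n\E(Z_1(x))}\Big\{\int_{T_{j-1}}^{T_j}\zeta_t\big(\psi_y(Y_t)-C_T\big)\Delta_t(x)\,dt-\E\Big[\int_{T_{j-1}}^{T_j}\zeta_t(\psi_y(Y_t)-C_T)\Delta_t(x)\,dt\,\Big|\,\F_{j-1}\Big]\Big\}.$$
By construction $(\eta_j,\F_j)_{1\le j\le n}$ is a genuine martingale difference array, and since $\F_{j-2}\subseteq\F_{t-\delta}\subseteq\F_{j-1}$ for $T_{j-1}\le t\le T_j$, I would show that $\sum_j\eta_j$ agrees with $\sqrt{T\phi(h_T)}\,Q_T$ up to an $o_P(1)$ term coming from replacing the pointwise $\F_{t-\delta}$-conditioning by the block $\F_{j-1}$-conditioning. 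I would then invoke the martingale CLT (Hall and Heyde), which requires (i) convergence of the predictable quadratic variation $\sum_{j=1}^n\E[\eta_j^2\mid\F_{j-1}]$, and (ii) a conditional Lyapunov condition $\sum_{j=1}^n\E[|\eta_j|^{2+\nu}\mid\F_{j-1}]\xrightarrow{P}0$ for some $\nu>0$. Step (ii) is routine: the moment bounds (A3)(iii),(iv$'$) together with the small-ball rate of (A2) make each block contribution of the order of $(T\phi(h_T))^{-\nu/2}$, which vanishes because $T\phi(h_T)\to\infty$ by (\ref{ConTheoremNor}).

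The main obstacle is Step (i), the computation of the predictable quadratic variation in continuous time. Expanding the square of each block integral produces a double integral over $[T_{j-1},T_j]^2$; inside it I would apply the Markov-type identities (A3)(i),(iii) to replace $\E[\zeta_t\mid X_t]$ by $p(x)$ and $\E[(\psi_y(Y_t)-m_\psi(x,y))^2\mid X_t]$ by $\overline{W}_2(x,y)$ up to continuity errors, reduce the products $\Delta_t(x)\Delta_s(x)$ through the small-ball asymptotics (A2)(i),(iv) and the constants $M_1,M_2$ of (\ref{mbarTi}), and pass to the limit via the ergodic averages (A2)(iii),(v). The delicate point, absent in discrete time, is the near-diagonal region $|t-s|\le\alpha_0$: its contribution is controlled only through $f_{t,s}(x)\le b_{t,\alpha_0}(x)$ and $T^{-1}\int_0^T b_{t,\alpha_0}(x)\,dt\to D_{\alpha_0}(x)$ of (A2)$(i_0)$,(v), and it is precisely this bounding that forbids an exact identity and yields the inequality in (\ref{sigma1}). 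Assembling the pieces gives
$$\sum_{j=1}^n\E\big[\eta_j^2\mid\F_{j-1}\big]\ \xrightarrow{P}\ p(x)^2\,\sigma^2(x,y)\ \le\ \frac{p(x)}{f(x)}\,\frac{M_2}{M_1^2}\,\overline{W}_2(x,y).$$
Dividing by $\widehat{m}_{\psi,T,1}(x)^2\to p(x)^2$ converts this into the stated bound $\sigma^2(x,y)\le f(x)^{-1}\widetilde{V}(x,y)$, and combining with the negligibility of the bias from the first paragraph completes the proof.
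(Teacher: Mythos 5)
Your overall architecture coincides with the paper's: peel off the bias $B_T$ (negligible since $h_T^\beta\sqrt{T\phi(h_T)}=o(1)$), reduce to the stochastic term $Q_T/\widehat{m}_{\psi,T,1}$, show $\widehat{m}_{\psi,T,1}(x)\to p(x)$ (the paper's Lemma \ref{mhat}), and apply the Hall--Heyde martingale CLT to block sums over the $\delta$-partition (the paper's Lemma \ref{AsymNormalityQ}), finishing with Slutsky. Your centering at $C_T$ rather than at $m_\psi(x,y)$ is an exact algebraic identity that simply absorbs the paper's remainder $R_T$ into the main term, and is fine. Your block-level $\F_{j-1}$-centering, instead of the paper's pointwise $\F_{t-\delta}$-centering with conditioning on the lagged field $\F_{i-2}$, is also workable, but the $o_P(1)$ recentering claim you leave unproved must itself be justified: it follows from a bias-type bound of order $h_T^\beta$ per unit time together with $h_T^\beta\sqrt{T\phi(h_T)}=o(1)$, which is the same estimate the paper uses to pass from $\sum_i\E[\xi_{T,i}^2\mid\F_{i-2}]$ to $\sum_i\E[\eta_{T,i}^2\mid\F_{i-2}]$.

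The genuine gap is in your Step (i), the predictable quadratic variation. You propose to expand each block square into a double integral over $[T_{j-1},T_j]^2$ and to ``reduce the products $\Delta_t(x)\Delta_s(x)$ through the small-ball asymptotics (A2)(i),(iv)''. But (A2) supplies only one-point small-ball quantities --- the marginal $F_x(u)$ and the conditional-on-the-past $F_x^{\F_s}(u)$ --- and says nothing about the joint two-point behavior $\PR\left(d(x,X_t)\le u,\, d(x,X_s)\le u \mid \cdot\right)$ that an exact evaluation of $\E[\Delta_t(x)\Delta_s(x)\cdots\mid\F_{j-2}]$ requires; within a block the two times satisfy $|t-s|<\delta$, so no conditioning argument can disentangle them. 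This is precisely why the paper never computes the quadratic variation: it dominates each block square by Cauchy--Schwarz, $\bigl(\int_{T_{i-1}}^{T_i}\cdots\,dt\bigr)^2\le\delta\int_{T_{i-1}}^{T_i}(\cdots)^2\,dt$, which reduces everything to the single-time quantities $\E[\Delta_t^2(x)\mid\F_{i-2}]$ covered by Lemma \ref{lem1}. That Cauchy--Schwarz step --- not the near-diagonal bound $f_{t,s}(x)\le b_{t,\alpha_0}(x)$, which in the paper enters only the moment estimates --- is the true source of the inequality in (\ref{sigma1}). A related consequence: since neither your double-integral sketch nor the paper's domination argument identifies the limit of $\sum_j\E[\eta_j^2\mid\F_{j-1}]$, your final display asserting convergence in probability to $p(x)^2\sigma^2(x,y)$ is not established; an upper bound on the quadratic variation is strictly weaker than the convergence that the Hall--Heyde corollary demands, and your write-up presents as proved a convergence that the sketched estimates cannot deliver.
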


Note that, the statement (\ref{sigma1}) gives only an upper bound of  the asymptotic variance  $\sigma^2(x, y)$.  The following proposition  gives an estimate of $\widetilde{V}(x,y)$  that will be needed to construct confidence bounds for the unknown operator $m(x, y)$. 
\begin{proposition}\label{PropoV(x)} Suppose 
 conditions of  Theorem \ref{AsymNormality}  hold and $\sigma^2(x,y)>0$, then   
	$$\widetilde{V}_T(x,y):=\frac{\sqrt{M_{T,2}}}{M_{1, T}}\sqrt{\frac{\overline{W}_{2,T}(x,y)}
		{T F_{x,T}(h_T) p_T(x)}},$$
	is an asymptotically  consistent estimator for $\widetilde{V}(x,y)$. The quantities  $M_{1,T}$, $M_{2,T}$, $\overline{W}_{2,T}$, $p_T(x)$ and  $F_{x,T}$ are empirical versions of   $M_{2}$, $M_{1}$, 
	$\overline{W}_{2}$, $p(x)$ and  $F_{x}$ respectively. 
\end{proposition}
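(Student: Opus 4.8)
The plan is to read $\widetilde V_T(x,y)$ as a fixed smooth function---built from products, ratios and a square root---of a finite family of empirical quantities $\big(M_{1,T},M_{2,T},\overline W_{2,T}(x,y),p_T(x),F_{x,T}(h_T)\big)$, to prove that each of these converges almost surely to its population counterpart, and then to transfer the convergence to $\widetilde V_T$ by the continuous mapping theorem. Because $\phi(h_T)\to0$, the only genuinely vanishing factor is the scaling $T F_{x,T}(h_T)$; recalling $F_x(h_T)=\phi(h_T)f(x)+o(\phi(h_T))$ from (A2)(i), the substance of the claim is the identity
\[
T F_{x,T}(h_T)\,\widetilde V_T^2(x,y)=\frac{M_{2,T}}{M_{1,T}^2}\,\frac{\overline W_{2,T}(x,y)}{p_T(x)}\ \xrightarrow{a.s.}\ \frac{M_2}{M_1^2}\,\frac{\overline W_2(x,y)}{p(x)}=\widetilde V(x,y),
\]
so that $\widetilde V_T$ consistently estimates the standardising quantity $\sqrt{\widetilde V(x,y)/\big(T F_x(h_T)\big)}$ entering the confidence interval. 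For the continuous map to be legitimate I will use that all limiting denominators are bounded away from $0$: this follows from $f(x)>0$, $p(x)>0$, the positivity of $M_1$ (discussed in the comments on the assumptions), and from $\sigma^2(x,y)>0$, which forces $\overline W_2(x,y)>0$.

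First I would dispatch the two ``regression-type'' factors. The propensity $p(x)=\E(\zeta_t\mid X_t=x)$ is the complete-data specialisation of $\widehat m_{\psi,T}$ obtained by taking $\zeta_t$ as the response and setting the missingness indicator identically equal to $1$, so $p_T(x)=\big(\int_0^T\zeta_t\Delta_t(x)\,dt\big)/\big(\int_0^T\Delta_t(x)\,dt\big)$ falls under Theorem \ref{thm1}; the continuity hypotheses (A3)(iv)--(iv$'$) supply the smoothness needed there and give $p_T(x)\to p(x)$ a.s. at the rate of (\ref{Th-PointwiseConv}). For the conditional dispersion I would exploit $\overline W_2(x,y)=W_2(x,y)-m_\psi^2(x,y)$: estimating $W_2(x,y)=\E(|\psi_y(Y)|^2\mid X=x)$ by the generalized-regression estimator built from $\psi_y^2$ (whose moments are controlled by (A3)(iii) and (U2)) and $m_\psi(x,y)$ by $\widehat m_{\psi,T}(x,y)$, a second application of Theorem \ref{thm1} followed by the continuous map $(a,b)\mapsto a-b^2$ yields $\overline W_{2,T}(x,y)\to\overline W_2(x,y)$ a.s.

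It then remains to handle the kernel-analytic factors $M_{1,T},M_{2,T}$ and the scaling, all of which are governed by the empirical small-ball distribution $F_{x,T}(u)=T^{-1}\int_0^T\1\{d(x,X_t)\le u\}\,dt$ (or its $\delta$-discretised version). Under the ergodicity of (A2) together with the averaging property (\ref{ergdicit1}) I expect $F_{x,T}(h_Tu)/\phi(h_T)\to \tau_0(u)\,f(x)$ a.s., uniformly in $u\in[0,1]$ by the regularity (A2)(iv); specialising at $u=1$ gives $F_{x,T}(h_T)/\phi(h_T)\to f(x)$, hence $T F_{x,T}(h_T)\sim T\phi(h_T)f(x)$, while the ratio $\tau_{0,T}(u):=F_{x,T}(h_Tu)/F_{x,T}(h_T)$ converges uniformly to $\tau_0(u)$. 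Substituting $\tau_{0,T}$ into $M_{j,T}=K^{(j)}(1)-\int_0^1(K^j)'(u)\,\tau_{0,T}(u)\,du$ and invoking dominated convergence---legitimate since $(K^j)'$ is integrable by (A1)---gives $M_{j,T}\to M_j$ a.s. for $j=1,2$.

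The step I expect to be the main obstacle is exactly the consistency of the degenerate quantities $F_{x,T}(h_T)$ and $\tau_{0,T}$: since $\phi(h_T)\to0$ the underlying events have vanishing probability, so a direct appeal to (\ref{ergdicit1}) with a \emph{fixed} integrand is unavailable. Instead I would reuse the martingale-plus-ergodic decomposition already developed for Theorems \ref{thm1}--\ref{uniform}, writing $\1\{d(x,X_t)\le h_Tu\}$ as its $\F_{t-\delta}$-conditional expectation (which averages, via (A2)(ii)--(iii), to $\phi(h_Tu)f(x)$) plus a martingale-difference remainder controlled by a Lenglart-type inequality under condition (\ref{cond}); carrying this bound uniformly over $u\in[0,1]$ is what makes the continuous-mapping conclusion rigorous. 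Assembling the four almost sure convergences in the display above and applying the continuous mapping theorem to
\[
\widetilde V_T(x,y)=\frac{\sqrt{M_{2,T}}}{M_{1,T}}\sqrt{\frac{\overline W_{2,T}(x,y)}{T F_{x,T}(h_T)\,p_T(x)}}
\]
then delivers the claimed asymptotic consistency.
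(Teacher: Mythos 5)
Your proposal is correct, and it is actually more complete than what the paper itself provides: Proposition \ref{PropoV(x)} is stated without an explicit proof, the text after it merely specifying the plug-in estimators $\tau_{0,T}(u)=F_{x,T}(uh_T)/F_{x,T}(h_T)$ and $F_{x,T}$, and deferring $\overline{W}_{2,T}$ to a nonparametric conditional-variance estimator as in \cite{LL10}. Your architecture (consistency of each empirical ingredient, then continuous mapping with limiting denominators kept away from zero) is exactly the route the paper intends: your treatment of $p_T(x)$ coincides with Lemma \ref{mhat}; your estimation of $\overline{W}_2=W_2-m_\psi^2$ by two applications of Theorem \ref{thm1} is the paper's suggestion via \cite{LL10}; and your martingale-plus-ergodic handling of the degenerate quantity $F_{x,T}(h_T)/\phi(h_T)$ --- conditional expectation via (A2)(ii)--(iii), martingale-difference remainder controlled by an exponential inequality under (\ref{cond}) --- reuses precisely the machinery of Lemmas \ref{lem1}--\ref{lem3}. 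You correctly singled this out as the step that genuinely needs an argument, since the plain ergodic property (\ref{ergdicit1}) cannot be invoked for an integrand that varies with $T$. Two further points in your favour: first, you noticed that, read literally, $\widetilde{V}_T(x,y)\to 0$ because of the factor $(TF_{x,T}(h_T))^{-1/2}$, so the proposition can only mean ratio-consistency, i.e.
\begin{equation*}
T F_{x,T}(h_T)\,\widetilde{V}_T^2(x,y)\ \xrightarrow{a.s.}\ \widetilde{V}(x,y),
\qquad\text{equivalently}\qquad
T\phi(h_T)\,\widetilde{V}_T^2(x,y)\ \xrightarrow{a.s.}\ \sigma^2(x,y),
\end{equation*}
which is the reading actually used in the proof of Corollary \ref{colorlarryNor}, where only the ratio $\widetilde{V}^2(x,y)/\widetilde{V}_T^2(x,y)\to 1$ enters; second, your use of $\sigma^2(x,y)>0$ to force $\overline{W}_2(x,y)>0$ is what legitimizes the final continuous-mapping step. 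One minor overstatement: for $p_T(x)$ and the estimator of $W_2(x,y)$ the hypotheses supply only continuity in $x$ ((A3)(iii)--(iv$'$)), not H\"older regularity, so these plug-ins carry an $o(1)$ bias rather than converging ``at the rate of (\ref{Th-PointwiseConv})''; this is harmless for the consistency claim, which is all the proposition asserts.
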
 
The estimators  $M_{1,T}$ and $M_{2,T}$ depend on the 
unknown  quantity $\tau_0$ given in (A2)(iv) that may be  estimated by:
$$\tau_{0,T}(u)=\frac{F_{x,T}(uh_T)}{F_{x,T}(h_T)} \quad \text{with}  \quad F_{x,T}(u)=\frac1T\int_0^T 
\1_{\left\{ d(x,X_t)\leq u \right\}} dt,$$
whereas $\overline{W}_{2,T}$  can be deduced from a  nonparametric estimator for the  conditional variance function $\sigma^2_T$ 
(see \cite{LL10}).

\subsection{Continuous time confidence intervals}\label{subsec2-4}
\subsubsection{Asymptotic confidence intervals}

 Using the non-decreasing property of the cumulative  standard Gaussian distribution function and  the estimator $\widetilde{V}_T(x, y)$  we can then,   with the help of  Proposition \ref{PropoV(x)} and Theorem \ref{AsymNormality}, 
	constructed  confidence bands for the regression function $m_\psi(x,y)$, which  are similar  to the those  obtained in the discrete time case. This is the subject of the following corollary. 

\begin{corollary}\label{colorlarryNor}
	Assume conditions of Theorem (\ref{AsymNormality}) are fulfilled and conditions in (\ref{ConTheoremNor}) are replaced by
		\begin{eqnarray}\label{Corollay-intr-1}
	\lim_{T\rightarrow \infty} TF_{x,T}(h_T)=+\infty \quad \text{and} \quad \lim_{T\rightarrow \infty} h_T^\beta\sqrt{TF_{x,T}(h_T)}=0.
\end{eqnarray}	
	 Then, for any $0<\alpha<1$, the $(1-\alpha)100\%$ confidence bands for  $m_\psi(x,y)$ are given by 
	\begin{eqnarray}\label{Corollay-intr}
		\widehat{m}_{\psi,T}(x,y) \pm c_{1-\alpha/2} \frac{M_{T,1}}{\sqrt{M_{T,2}}}\sqrt{\frac{\overline{W}_{T,2}(x,y)}{T F_{x,T}(h_T) p_T(x)}}, \quad \text{as} \ T\to \infty,
	\end{eqnarray}
	where $c_{1-\alpha/2}$ is the  quantile of standard  Gaussian distribution.
\end{corollary}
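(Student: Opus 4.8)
The plan is to derive the band directly from the central limit theorem of Theorem~\ref{AsymNormality} by studentizing $\widehat m_{\psi,T}(x,y)$ with the consistent variance estimate $\widetilde{V}_T(x,y)$ of Proposition~\ref{PropoV(x)}, and then inverting the limiting Gaussian law. The first step is to check that the modified hypotheses (\ref{Corollay-intr-1}) genuinely put us back in the setting of Theorem~\ref{AsymNormality}. By Assumption (A2)(i) we have $F_x(h_T)=\phi(h_T)f(x)+o(\phi(h_T))$, and since $F_{x,T}(h_T)$ is a consistent estimator of $F_x(h_T)$ (as already exploited in Proposition~\ref{PropoV(x)}), it follows that $T F_{x,T}(h_T)=T\phi(h_T)\,f(x)\,(1+o_{a.s.}(1))$. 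Because $f(x)>0$, the divergence $TF_{x,T}(h_T)\to\infty$ is equivalent to $T\phi(h_T)\to\infty$, and $h_T^\beta\sqrt{TF_{x,T}(h_T)}\to0$ is equivalent to $h_T^\beta\sqrt{T\phi(h_T)}\to0$; hence (\ref{Corollay-intr-1}) implies the conditions (\ref{ConTheoremNor}) and the CLT is in force, with the $\OH(h_T^\beta)$ bias asymptotically negligible after scaling.

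Next I would assemble the studentized statistic. Combining the bound $\sigma^2(x,y)\le \widetilde{V}(x,y)/f(x)$ from (\ref{sigma1}) with $T\phi(h_T)f(x)=TF_x(h_T)(1+o(1))$, the population standard deviation of $\widehat m_{\psi,T}(x,y)$ satisfies
\[
\frac{\sigma(x,y)}{\sqrt{T\phi(h_T)}}\ \le\ \sqrt{\frac{\widetilde{V}(x,y)}{T F_x(h_T)}}\ =\ \frac{\sqrt{M_2}}{M_1}\sqrt{\frac{\overline{W}_2(x,y)}{T F_x(h_T)\,p(x)}},
\]
whose empirical counterpart is precisely $\widetilde{V}_T(x,y)$. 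By Proposition~\ref{PropoV(x)} the five studentizing quantities $M_{T,1},M_{T,2},\overline{W}_{T,2},p_T(x),F_{x,T}(h_T)$ are jointly consistent (recall $M_{T,1},M_{T,2}$ are consistent through $\tau_{0,T}$, itself built from $F_{x,T}$), so $\widetilde{V}_T(x,y)$ estimates the right-hand side above consistently. Slutsky's theorem then gives
\[
\frac{\widehat m_{\psi,T}(x,y)-m_\psi(x,y)}{\widetilde{V}_T(x,y)}\ \xrightarrow{d}\ \mathcal{N}(0,\kappa^2),\qquad \kappa^2:=\frac{\sigma^2(x,y)\,f(x)}{\widetilde{V}(x,y)}\le 1.
\]

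Finally I would invert this convergence. By definition of the quantile $c_{1-\alpha/2}$ of the standard Gaussian law, the event $\{|\widehat m_{\psi,T}(x,y)-m_\psi(x,y)|\le c_{1-\alpha/2}\,\widetilde{V}_T(x,y)\}$ has asymptotic probability $\PR(|\mathcal{N}(0,\kappa^2)|\le c_{1-\alpha/2})\ge 1-\alpha$, and rewriting $\widetilde{V}_T(x,y)$ in the form of Proposition~\ref{PropoV(x)} yields exactly the band (\ref{Corollay-intr}). I expect the difficulty here to be organizational rather than deep: the two genuine points to control are that all the plug-in estimators converge simultaneously, so that Slutsky may be applied to the product, and that (\ref{sigma1}) supplies only an upper bound on $\sigma^2(x,y)$, with the consequence that the interval is asymptotically conservative (coverage at least $1-\alpha$); exact coverage would follow only if the inequality in (\ref{sigma1}) were an equality.
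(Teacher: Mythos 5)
Your proof is correct and takes essentially the same route as the paper's: the paper likewise factors the studentized statistic $\sqrt{TF_{x,T}(h_T)/\widetilde{V}^2_T(x,y)}\,(\widehat{m}_{\psi,T}(x,y)-m_\psi(x,y))$ into the ratio $F_{x,T}(h_T)/(\phi(h_T)f(x))\to 1$ (consistency of $F_{x,T}$ plus (A2)(i)), the ratio $\sigma^2(x,y)f(x)/\widetilde{V}^2_T(x,y)$ controlled by (\ref{sigma1}) and Proposition \ref{PropoV(x)}, and the CLT term from Theorem \ref{AsymNormality}, then inverts the Gaussian law. Your explicit observation that the band is only asymptotically conservative (coverage at least $1-\alpha$) because (\ref{sigma1}) is an upper bound is exactly what the paper's inequality (\ref{pseudo-intr}) encodes implicitly.
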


\subsubsection{Exchangeable bootstrap-based confidence intervals}
In a variety of statistical problems, the bootstrap provides a simple method  for circumventing technical difficulties due to the intractable distribution theory (this is the case in
Theorem \ref{AsymNormality}, where the limiting law depends crucially on the unknown variance). Resampling techniques become a
powerful tool, e.g., for setting confidence bands, that we will illustrate in the sequel. The key idea behind the bootstrap is that if a sample is representative of the underlying population, then one can make inferences about the population characteristics by resampling from the current sample. To apply this approach for constructing  confidence bands we have to define a bootstrap version  that has the same asymptotic distribution  as  $\sqrt{T\phi(h_T)}(\widehat{m}_{\psi,T}(x,y)-m_\psi(x, y))$.   
Inspection of the proof of Theorem \ref{AsymNormality} shows that  the principal term   leading to establish the asymptotic normality
is given by $\frac{\sqrt{T\phi(h_T)}Q_T(x,y)}{\widehat{m}_{\psi,T,1}(x,y)}$, where $Q_T$ is defined in  (\ref{Q}).  Therefore, a bootstrap version  for  this quantity will be sufficient for estimating   the true quantile $c_{1-\alpha/2}$ for a given $0<\alpha<1$. 
This may be done by combining Theorem \ref{AsymNormality}, 
 Theorem 2.1. of \cite{Pauly2011} for martingale difference arrays sequences and Theorem 3 of  \cite{B16}. In this respect, define 
\begin{eqnarray*}
S_T(x,y):=\sqrt{T\phi(h_T)}Q_T(x,y)&=&\sum_{i=1}^n \xi_{T,i}(x,y), 
\end{eqnarray*}
where
$$ 
\xi_{T,i}(x,y)=\eta_{T,i}(x,y)-\E\left[\eta_{T,i}(x,y) \ \Big| {\cal F}_{t-\delta} \right],$$
and
\begin{eqnarray}\label{NormalityProofLema1}
\eta_{T,i}(x,y) &=& \frac1{ \E Z_1}\sqrt{\frac{\phi_T(h)}{n}}
\int_{T_{i-1}}^{T_i}\zeta_t\Delta_t(x) \left[\psi_y(Y_t)-m_\psi(x,y)\right] dt.
\end{eqnarray}
\noindent For every fixed $n\geq 1$, the sequence $(\xi_{T,i})_{1\leq i \leq n}$ is a centred martingale difference array  with respect ${\cal F}_{i-1}$(see the proof of Lemma \ref{AsymNormalityQ}).

Define the  resampling version of the quantity $S_T(x,y)$ as 
\begin{eqnarray}\label{BosstrabConVer}
S^\star_T(x,y)&=&
\sqrt{n}\sum_{i=1}^n  W_{T, i} \xi_{T,i}(x,y),
\end{eqnarray}
where 
  $W_{T, i}$ are weight functions  satisfying  conditions $W.1-W.4$  in  \cite{B16}. 
The following proposition, which is a consequence of  Theorem \ref{AsymNormality}, Lemma \ref{mhat} and the Slutsky's lemma,  states 
the asymptotic normality of the statistic $S^\star_T$. Its proof may be obtained similarly to that of  Theorem 3 of  \cite{B16}. 
\begin{proposition}\label{poropBoostrab} Assume that the above conditions related  to  bootstrap weights hold
and  the sequences of random variables
$\left\{ W_{T, i}: \ 1\leq i \leq n \right\}$ and  
$\left\{ \xi_{T, i}: \ 1\leq i \leq n \right\}$ are independent. 
	Then under the assumptions of Theorem \ref{AsymNormality} and for any $x \in \cal E$  such that $f(x) > 0$, we have 
	\begin{eqnarray}\label{propositionBost}
		S^\star_T(x,y) \quad \xrightarrow{d}\quad {\cal N}(0, \sigma^2(x,y)) \  \text{as} \quad T\to \infty, \  
	where \  \sigma^2(x,y)\  \text{ is as in Theorem}.  \  \ref{AsymNormality}	
	\end{eqnarray}	
\end{proposition}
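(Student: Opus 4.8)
The plan is to recognise $S^\star_T(x,y)$ as a weighted (multiplier) version of the centred martingale difference array $(\xi_{T,i})_{1\le i\le n}$ that already drives Theorem \ref{AsymNormality}, and to transfer the central limit theorem from the array to its resampled counterpart by invoking the multiplier CLT for martingale difference arrays of \cite{Pauly2011}, exactly as in Theorem 3 of \cite{B16}. The guiding device is to condition on the data $\sigma$-field $\F_n=\sigma((X_s,Y_s,\zeta_s):0\le s<T_n)$: since the resampling weights $\{W_{T,i}\}$ are by hypothesis independent of $\{\xi_{T,i}\}$, given $\F_n$ the numbers $\xi_{T,i}(x,y)$ are frozen and $S^\star_T=\sqrt n\sum_{i=1}^n W_{T,i}\xi_{T,i}$ is a weighted sum in the weights alone. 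One then proves a conditional CLT in probability and, by independence of weights and data, upgrades it to the unconditional statement \eqref{propositionBost}.

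First I would extract from the proof of Theorem \ref{AsymNormality} (through Lemma \ref{AsymNormalityQ}) the two ingredients that make $(\xi_{T,i})$ satisfy the martingale CLT: it is a centred $\F_{i-1}$-martingale difference array whose conditional variances converge, $\sum_{i=1}^n \E[\xi_{T,i}^2\mid\F_{i-1}]\xrightarrow{\PR}\sigma^2(x,y)$, and which obeys the conditional Lindeberg condition $\sum_{i=1}^n \E[\xi_{T,i}^2\,\1_{\{|\xi_{T,i}|>\varepsilon\}}\mid\F_{i-1}]\xrightarrow{\PR}0$ for every $\varepsilon>0$. These are precisely the hypotheses under which \cite{Pauly2011} (Theorem 2.1) asserts that the multiplier array $(\sqrt n\,W_{T,i}\xi_{T,i})$, viewed with respect to the enlarged filtration $\G_i=\F_i\vee\sigma(W_{T,1},\dots,W_{T,i})$, is again a martingale difference array satisfying the analogous conditions, once the weights meet W.1--W.4 of \cite{B16}.

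The substance of the argument is then to verify these two conditions for the weighted array. For the conditional variance, the exchangeability and the normalisation built into W.2--W.3 (so that $n\,\mathrm{Var}(W_{T,i})\to 1$ and the pairwise-covariance contribution is asymptotically negligible) reduce the conditional variance of $S^\star_T$ given $\F_n$, up to a $o_{\PR}(1)$ term, to the quadratic variation $\sum_{i=1}^n \xi_{T,i}^2$. The key point is that $\sum_{i=1}^n\xi_{T,i}^2$ shares the limit $\sigma^2(x,y)$ with $\sum_{i=1}^n \E[\xi_{T,i}^2\mid\F_{i-1}]$; I would establish this by showing that their difference $\sum_{i=1}^n\big(\xi_{T,i}^2-\E[\xi_{T,i}^2\mid\F_{i-1}]\big)$ is itself a martingale difference sum tending to $0$ in probability, controlled via the continuous-time ergodic averaging (A2)(iii) and the moment bound $\sup_{x\in\C,y\in S}W_{\kappa_1}(x,y)<\infty$ of (A3)(iii). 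For the Lindeberg part, I would dominate the weighted truncated second moment by the product of a uniform moment bound on the weights (W.4) and the Lindeberg quantity already established for $(\xi_{T,i})$, so that it too vanishes.

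With both conditions in force, \cite{Pauly2011} yields that the law of $S^\star_T$ conditional on $\F_n$ converges weakly to $\N(0,\sigma^2(x,y))$ in probability, and independence of the weights from the data upgrades this to the unconditional convergence in \eqref{propositionBost}; when the statistic is later used in studentised form, convergence of the random denominator $\widehat m_{\psi,T,1}(x,y)$ furnished by Lemma \ref{mhat} together with Slutsky's lemma preserves the same Gaussian limit. The main obstacle I anticipate is the conditional-variance step: the martingale CLT for $S_T$ delivers $\sum_i\E[\xi_{T,i}^2\mid\F_{i-1}]\to\sigma^2$, whereas the bootstrap conditional variance instead requires the unconditioned quadratic variation $\sum_i\xi_{T,i}^2\to\sigma^2$, and bridging this gap is not automatic in the ergodic (non-mixing) continuous-time setting—it is where the auxiliary martingale-difference argument and the moment hypotheses (A3)(iii) must do the real work.
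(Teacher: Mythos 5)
Your proposal follows exactly the paper's route: the paper's own proof of Proposition \ref{poropBoostrab} is essentially a citation---it combines Theorem \ref{AsymNormality}, the multiplier CLT for martingale difference arrays (Theorem 2.1 of \cite{Pauly2011}), Theorem 3 of \cite{B16}, Lemma \ref{mhat} and Slutsky's lemma---and your plan of conditioning on the data, verifying the weighted conditional-variance and Lindeberg conditions, and de-conditioning via independence of the weights is precisely the content of those references. If anything, your write-up is more explicit than the paper's, notably in flagging the quadratic-variation bridge $\sum_{i=1}^n \xi_{T,i}^2(x,y) \to \sigma^2(x,y)$ that the multiplier CLT requires, a step the paper delegates entirely to the cited works.
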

\begin{remark}\label{remarkBoostrab}. 
{\rm Observe that $S^\star_T$ depends on the  unknown function $m_\psi(x,y)$.
Substitute  $m_\psi(x,y)$ by  $\widehat{m}_{\psi,T}(x,y)$ and  $S^\star_T$ by its  estimate version  $\widehat{S^\star_T}$, it follows from   Lemma  \ref{lem1} and Theorem \ref{thm1} that
$\widehat{S^\star_T}-S^\star_T=o(1)$ as $T$ goes to infinity. 
Therefore the  statement (\ref{propositionBost}) still holds if we replace $S^\star_T$ by  $\widehat{S^\star_T}$. }
\end{remark}
Proposition \ref{poropBoostrab} turns out to be useful for  obtaining confidence bands for the regression function  $m_\psi(x, y)$. 
 Indeed, let  $0 < \alpha < 1$  be given, then, under conditions of Theorem \ref{AsymNormality}, we have 
\begin{eqnarray}\label{boostrabInterval}
	&&\lim_{T \rightarrow \infty}\mathbb{ P}\left(\widehat{m}_{\psi,T}(x,y)-c_{1-\alpha/2}(T \phi(h_T))^{-1/2}\leq m_\psi(x,y)\leq \widehat{m}_{\psi,T}(x,y)+c_{1-\alpha/2}(T \phi(h_T))^{-1/2}\right)
	\nonumber
	\\&&~~~~~~= \mathbb{ P}\left(| {\cal N}(0,{\sigma}^2(x,y) )|\leq c_{1-\alpha/2}\right),
\end{eqnarray}
where $c_{1-\alpha/2}$ is the $\alpha$-quantile of ${\cal N}(0,\sigma^2(x, y))$. In practice the true quantile $c_{1-\alpha/2}$ cannot be computed since the variance $\sigma^2(x, y)$ is unknown.  Nevertheless, from
Proposition \ref{poropBoostrab}, we can approximate the true quantile $c_{1-\alpha/2}$
 by means of the quantiles of the bootstrapped distribution of $\widehat{S^\star_T}$ (see Subsection \ref{subsec2-5}  for details).

\subsection{Sampling schemes and computation of the  confidence intervals}\label{subsec2-5}

In the previous section, the process was supposed to be observable  over $[0, T]$. However, in  practice the data are  often collected  according to a sampling scheme since it is difficult to observe a path continuously at any time $t$ over the interval $[0, T]$. 

Hereafter, we briefly discuss the effect of a sampling scheme on the construction of  confidence intervals for the regression function $m_\psi(\cdot, \cdot)$. Data sampling from a continuous time process at instants $(t_k)_{k=1, \dots, n}$ can be made regularly, irregularly or even randomly. For a sake of simplicity, we consider here the case where the instants $(t_k)$ are irregularly spaced,  that is $\inf_{1\leq k\leq n}|t_{k+1}- t_k|=\delta >0.$
Now, for  $\ell\in \{0, 1\}$ and $k\in \{1, \ldots n\}$, we define the following increasing families of $\sigma$-algebra:
\begin{eqnarray*}
\F_{k}:=	\F_{t_k} = \sigma\left( (X_{t_1}, Y_{t_1}, \zeta_{t_1}), \dots, (X_{t_k}, Y_{t_k}, \zeta_{t_k})\right),
\end{eqnarray*}
and
\begin{eqnarray*}
{\cal G}^\ell_{k}:=	{\cal G}^\ell_{t_k}= \sigma\left(\left(X_{t_1}, Y_{t_1}, \zeta_{t_1}\right), \dots, \left(X_{t_k}, Y_{t_k}, \zeta_{t_k}\right); X_{t_{k+1}}, (1-\ell)\zeta_{t_{k+1}}\right).
\end{eqnarray*}
 The purpose then consists in estimating $m_\psi(\cdot,\cdot)$ given the discrete time ergodic stationary process $\left(X_{t_k}, Y_{t_k},\zeta_{t_k} \right)_{k=1, \dots, n}$ sampled from the underlying continuous time process $\left\{(X_t, Y_t, \zeta_t): 0\leq t \leq T \right\}$. In case of a regular sampling scheme, that is $T=n\delta$, the  estimator $\widehat{m}_{\psi,T}(x,y)$ defined in (\ref{estimator}) becomes
\begin{eqnarray}\label{estimator-sampl}
\widehat{m}_{\psi,n}(x,y) = \frac{\sum_{k=1}^n \zeta_{t_k} \psi_y(Y_{t_k}) K\left( \frac{d(x,X_{t_k})}{h_n}\right) }{\sum_{k=1}^n \zeta_{t_k}  K\left( \frac{d(x,X_{t_k})}{h_n}\right)}, \quad\quad t_k = k\delta.
\end{eqnarray}
%
%

%
%


\bigskip 
\noindent a)  {\bf Asymptotic confidence intervals}

Notice that Theorem \ref{ConTheoremNor} sill holds for the estimate  $\widehat{m}_{\psi,n}(x,y)$ when replacing $T$ by $n \delta$.  The limiting law is a Gaussian  random variable with mean zero and variance function $\sigma^2(x, y)= \frac1{f(x)}\frac{M_2}{M_1^2  p(x)}\overline{W}_2(x, y).$  
Making use of  Corollary \ref{colorlarryNor} and considering similar steps as in \cite{LL10}, it follows that, for any $0<\alpha<1$, the asymptotic confidence intervals of $m_\psi(x,y)$ is given as:  
\begin{eqnarray*}
	\widehat{m}_{\psi,n}(x,y) \pm c_{1-\alpha/2} \frac{M_{n,1}}{\sqrt{M_{n,2}}}\sqrt{\frac{\overline{W}_{n,2}(x,y)}{n F_{x,n}(x) p_{n}(x)}}, \quad \text{as} \ n\to \infty,
\end{eqnarray*}
where $c_{1-\alpha/2}$ is the  quantile of standard  Gaussian distribution. 

\bigskip
\noindent b) {\bf Exchangeable bootstrap-based confidence intervals}\label{WBE}

The discrete-time   resampling version of the quantity $\widehat{S^\star_T}(x,y)$  is defined  for any  fixed $x\in \cal{E}$ as 
$$
\widehat{S^\star_n}(x,y)=\sqrt{n}\left\{\sum_{k=1}^{n}\left(W_{n,t_k}-\overline{W}_{n}\right)  \widehat{\xi}_{n, t_k}^\star(x,y) \right\},
$$
where $\overline{W}_{n}:= n^{-1}\sum_{k=1}^n W_{n,t_k}$, 
\begin{eqnarray*}
\widehat{\xi}_{n, t_k}^\star(x,y) &:=& \sqrt{\frac{F_{x,n}(h)}{n}}\,\zeta_{t_k}\left[Y_{t_k}-\widehat{m}_{\psi,n}(x,y)  \right]\frac{\Delta_{t_k}(x)}{\mathbb{E}(\Delta_{t_1}(x))}\\
&& -
	\mathbb{E}\left\{
	\sqrt{\frac{F_{x,n}(h)}{n}}\,\zeta_{t_k}\left[Y_{t_k}-\widehat{m}_{\psi,n}(x,y)  \right]\frac{\Delta_{t_k}(x)}{\mathbb{E}(\Delta_{t_1}(x))} \mid {\cal F}_{k-1}
	\right\},
\end{eqnarray*}

and $\{W_{n,t_k}: k=1, \dots, n\}$ the bootstrap weights generated at the instants $(t_k)_{k=1, \dots, n}$, such that,   for $n\geq 1$, the sequences of random variables
$\{\widehat{\xi}_{n, t_k}(x,y): k=1, \dots, n\}$ and
$\{W_{n,t_k}: k=1, \dots, n\}$ are independent.

In practice the bootstrap procedure can be summarized as follows:

\begin{itemize}
	\item  Suppose
	$
	\left(W^{(1)}_{ n,t_1},\ldots, W^{(1)}_{n,t_n}\right),\ldots,\left(W^{(B)}_{ n,t_1},\ldots, W^{(B)}_{n,t_n}\right)$
	are $B$ independent vectors, sampled independently from the sample $\{ \widehat{\xi}_{n, t_k}^{*}(x,y):k=1, \dots, n\}$.
	\item Consider the random variables
	$$
	\widehat{S^\star_n}^{(\ell)}(x,y)= \left|\sqrt{n}\left\{\sum_{k=1}^{n}\left(W_{n,t_k}^{(\ell)}-\overline{W}_{n}^{(\ell)}\right)  \widehat{\xi}_{n, t_k}^{*}(x,y) \right\}\right|,~~\ell=1,\ldots,B.
	$$
	\item Using Proposition \ref{poropBoostrab} and Remark \ref{remarkBoostrab}, one can use the smallest $z > 0$ as an approximation of the unknown true quantile
	$c_{1-\alpha/2}$,  such that
	$$
	\frac{1}{B}\sum_{\ell=1}^{B}\1_{\left\{\widehat{S^\star_n}^{(\ell)}(x,y)\leq z\right\}}\geq 1-\alpha.
	$$	
\end{itemize}

\section{Simulation study}\label{sec4}
This section aims to discuss numerically some aspects related to continuous time processes that might affect the quality of estimation of the operator $m_\psi(\cdot,y)$.  In this section we consider $\psi_y(Y_t) = Y_t$ and therefore $m_\psi(x,y)$  being   the conditional expectation $m(x)$ of $Y_t$ given $X_t=x.$ The first simulation aims to compare the quality of estimation of $m(x)$ based on a continuous time and discrete time processes. In simulation 2 we discuss the choice of the ``optimal" sampling mesh $\delta$ in the case of continuous time processes and assess its sensitivity towards the missing at random mechanism.
\subsection{Simulation 1: continus-time versus discrete-time estimators}
In this first simulation we try to compare the estimation of the regression operator when discrete and continuous time processes are considered. We want whether or not considering a continuous time processes may improve the quality of the predictions. We suppose that the functional space ${\cal E} = L^2([-1,1])$ endowed with its natural norm. The generation of continuous time processes $(\{X_t(s): s \in [-1,1]\}, Y_t)_{t\in [0,T]}$ is obtained by considering the following steps:
\begin{enumerate}
\item First, we simulate an Ornstein-Uhlenbeck (OU) process $(Z_t)_{t\geq 0}$ solution of the following stochastic differential equation
\begin{eqnarray}\label{OUP}
dZ_t = 2(5-Z_t)dt + 7d\mbox{W}_t,
\end{eqnarray}
where $\mbox{W}_t$ denotes a Wiener process. Here, we take $dt=0.005.$ \item Let $\Gamma(\cdot)$ be the operator mapping $\mathbb{R}$ into $L^2([-1,1])$ defined, for any $z\in \mathbb{R}$, as follows:
$$
\Gamma(z):= (1+\floor{z} - z ) \mbox{P}_{\mbox{num}(\floor{z})} + (z-\floor{z}) \mbox{P}_{\mbox{num}(\floor{z+1})},
$$
where $P_j$ is the Legendre polynomials of degree $j$ and $\mbox{num}(z) := 1+2\,z\,\mbox{sign}(z) - \mbox{sign}(z) (1+\mbox{sign}(z))/2$ and $\floor{\cdot}$ denotes the floor function. 
\item We consider that curves are sampled at 400 equispaced values in [-1,1] and defined, for any $t\in [0,T]$, as
$$
X_t(s) = \Gamma(Z_t)(s), \quad\quad s\in [-1,1].
$$
\item To generate the real-valued process $(Y_t)_{t\in [0,T]}$, the following nonlinear functional regression model is considered:
\begin{eqnarray}
Y_t = m(X_t) + \varepsilon_t,
\end{eqnarray}
where $m(x):= \int_{-1}^1 x^2(t^\prime) dt^\prime$ and $\varepsilon_t = U_t - U_{t-1}$ where $U_t$ is a Wiener process independent of $X_t$. 
\end{enumerate}
Observe that the OU process $\{Z_t: t\in [0,T]\}$ is a real-valued continuous time process (since $dt$ tends to zero). The operator $\Gamma(\cdot)$ has a role to transform each observation in the process $Z_t$ into a curve through the Legendre polynomials. In such way the functional variable $X$ is generated continuously as is the process $(Z_t)$. Moreover, notice that steps 1, 2 and 3 are devoted to simulate the continuous time functional process $\{X_t(s): s\in [-1,1]\}_{t\in [0,T]}$, whereas in step 4 the real-valued continuous time process $(Y_t)_{t\in [0,T]}$ is generated. A sample of 20 simulated curves is displayed in Figure \ref{Xsample} and an example of the real-valued process $(Y_t)$ is given in Figure \ref{Y}. 

\begin{figure}[h]
\begin{center}
\includegraphics[width=9cm,height=5cm]{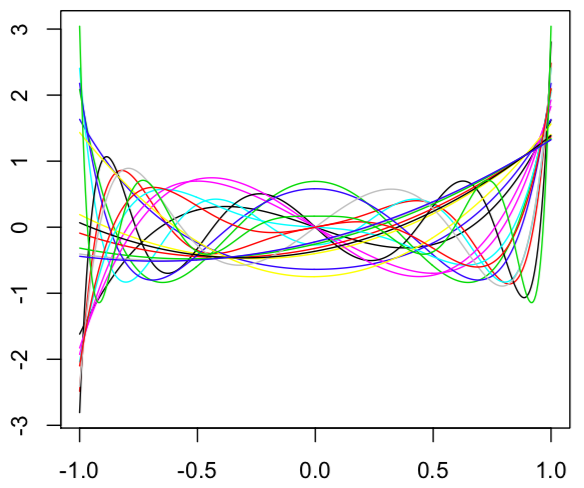}
\vspace{-0.3cm} \caption{\label{Xsample}
 A sample of 20 simulated curves $\{X_t(s) : s\in [-1,1]\}$.}
\end{center}
\end{figure}

\begin{figure}[h]
\begin{center}
\includegraphics[width=9cm,height=6cm]{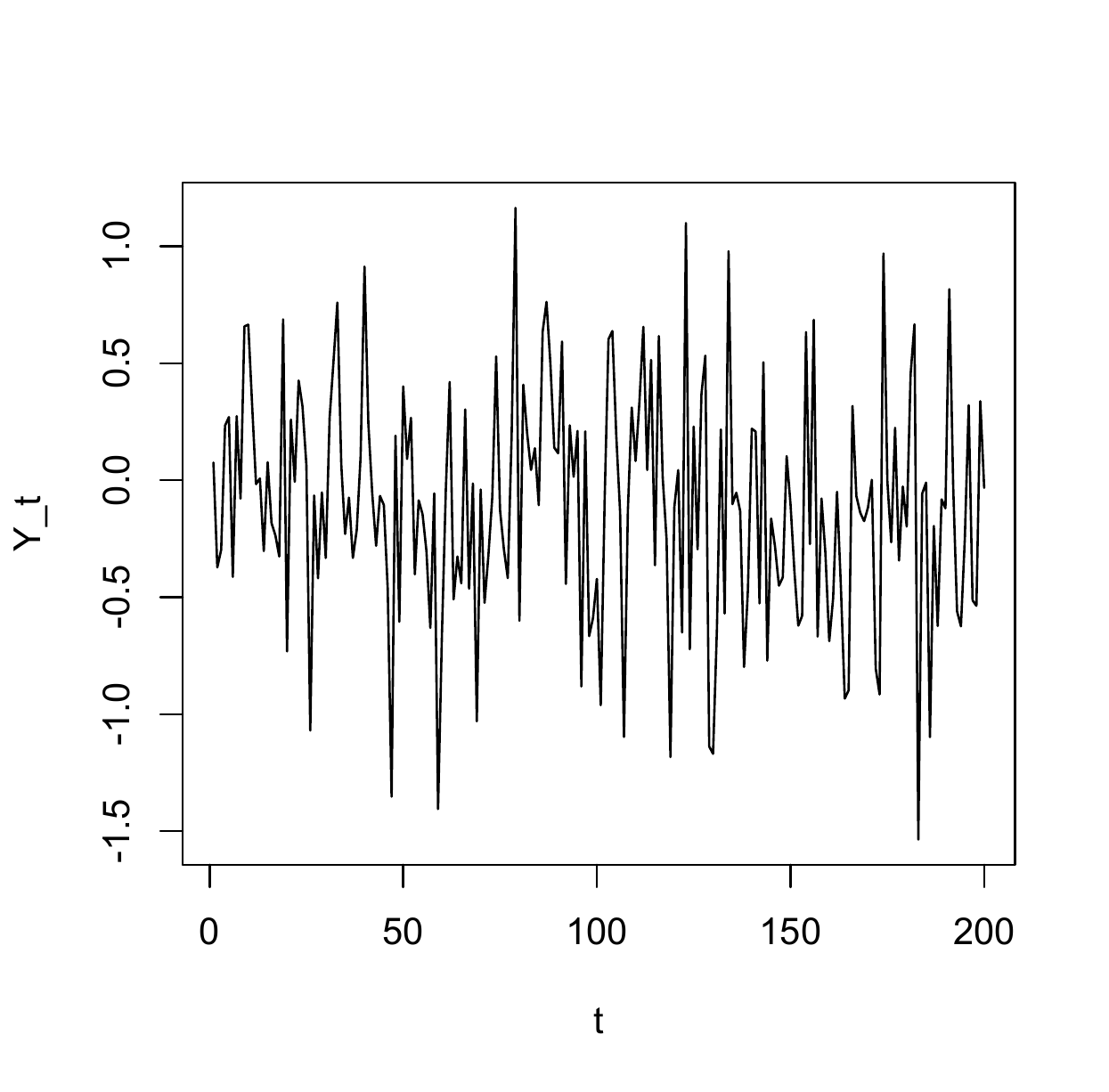}
\vspace{-0.3cm} \caption{\label{Y}
 A realisation of the process $(Y_t)_{t\in [0,200]}$.}
\end{center}
\end{figure}

Now, our purpose is to compare, in terms of estimation accuracy, the continuous-time estimator with the discrete-time one for different values of $T= 50, 200, 1000$ and several missing at random rates.  It is worth noting that the continuous time process $(X_t, Y_t)$ is observed at every instant $t=\delta, 2\delta, \dots, n\delta$, where $\delta=0.005$ and $n=T/\delta$. However, the discret-time process is observed only at the instants $t=1, 2, \dots, n.$

As in \cite{F13} and \cite{Ling15} we consider that the missing at random mechanism is led by the following probability distribution:
\begin{eqnarray}\label{marm}
p(x) = \mathbb{P}(\zeta_t = 1 | X_t =x) = \mbox{expit}\left(\int_{-1}^1 x^2(s) ds\right),
\end{eqnarray}
where $\mbox{expit}(u) = e^u/(1+ e^u)$, for $u \in \R.$ Now, we specify the tuning parameters on which depends our estimation given in \eqref{estimator}. We choose the quadratic kernel defined as $K(u) = \frac{3}{4}(1-u^2)\1_{[0,1]}(u)$ and because curves are smooth enough we choose as semi-metric the $L_2$-norm of the second derivatives of the curves, that is for $t_1 \neq t_2$, $d(X_{t_1}, X_{t_2}) = \left( \int_{-1}^1 [X_{t_1}^{(2)}(s) - X_{t_2}^{(2)}(s)]^2 ds \right)^{1/2}.$ We used the {\it local cross-validation} method on the $\kappa$-nearest neighbours introduced in \cite{F06} page 116 to select the optimal bandwidth for both discrete and continuous time regression estimators. The accuracy of the discrete and the continuous time regression estimators is evaluated on $M=500$ replications. The accuracy is measured, at each replication $j=1, \dots, M$, by using the squared errors $\mbox{SE}_{T}^j := \left(\widehat{m}_T^j(x) - m(x)  \right)^2$ and $\mbox{SE}_n^j := \left(\widehat{m}_n^j(x) - m(x)  \right)^2$ for the continuous-time and the discrete-time estimators, respectively. Observe that the discrete time estimator of the regression operator is defined as:
$$
\widehat{m}_n(x):= \dfrac{\sum_{t=1}^n  \zeta_t Y_t \Delta_t(x)}{\sum_{t=1}^n \zeta_t \Delta_t(x)}.
$$

   \begin{table}[h!]
  \caption{\label{tab1}{Summary statistics of $(\mbox{SE}^j)_{j=1, \dots, 500 }$ for discrete and continuous time estimators of the regression function}.
 }
\begin{tabular}{l|l|lcc|cccc|}

\hline
MAR rate&&&Continuous ($\times 10^{-2}$)&& &Discrete ($\times 10^{-2}$)\\
\cline{1-8}
   &$T$ & 50 &  200 & 1000 & 50&  200& 1000 \\
\cline{2-8}
& $Q_{25\%}$& 0.56 &  0.54&0.18 &1.6  & 1.3 &  0.7\\
$p=20\%$  &Median & 2.4 & 2.58 & 1.25 & 7.5 & 3.7 & 2.1 \\
&Mean & 5.39 & 4.62 & 4 & 15.11 & 11.8 & 9.1 \\
&$Q_{75\%}$ & 6.21 & 7.13 & 4.7 & 16.06 & 10.9 & 4.7 \\
\cline{1-8}
& $Q_{25\%}$ & 0.49 & 0.6 & 0.3& 1.7 & 1& 0.5 \\
$p=40\%$  &Median  & 4.77 & 2.7 & 2.2 & 4.9 & 6.1 & 2 \\
&Mean & 6.93 & 8.8 & 4.6 &12.4 & 9.9 & 8.3 \\
&$Q_{75\%}$ & 9.39 & 10.6 & 4.5 & 16.05 & 11.3 & 11.1 \\
\cline{1-8}
  \end{tabular}


  \end{table}

Table \ref{tab1} shows that continuous time regression estimator is more accurate than the discrete time one. Moreover, when $T$ increases the squared errors decrease much more quickly when working with the continuous time process.

\subsection{Simulation 2: optimal sampling mesh selection}
The purpose of this simulation is to investigate another aspect related to continuous time processes. The selection of the ``optimal'' sampling mesh is one of the most important topics in continuous time processes.  

First of all we generate a continuous-time functional data process according to the following equation:
$$
X_t(s) = Z_t \left(1-\sin(s-\pi/3) \right), \quad\quad s\in [0,\pi/3] \quad \mbox{and}\quad t\in[0,T],
$$ 
where $Z_t$ is an OU process solution of the stochastic differential equation \eqref{OUP} and practically observed at the instants $t=\delta, 2\delta, \dots, n\delta$ with $n=200$ fixed. Here, we take different values of sampling mesh $\delta$, and we study the accuracy of the estimator to identify the optimal mesh, say $\delta^\star$, that minimises the Mean Integrated Square Error (MISE).  Observe that each curve observed at the instant $t$ is discretized in 100 equidistant points over the interval $[0,\pi/3].$ The response variable is obtained following the hereafter nonlinear functional regression model:
$$
Y_t = m(X_t) +\varepsilon_t,
$$
where the operator $m(\cdot)$ is defined as $m(X_t) = \left(\int_{0}^{\pi/3} X_t^\prime(s) ds \right)^2$ and $\varepsilon_t\sim N(0, 0.075).$

Moreover, the missing at random mechanism in this simulation is also supposed to be the same as described in the first simulation as per equation \eqref{marm}. For the tuning parameters used to build the estimator, we considered the quadratic kernel and given the shape of the true regression operator, which depends on the first derivative of the functional covariate, the Euclidean distance between the first order derivatives of the curves is adopted as a semi-metric. Finally the bandwidth is selected according to the local cross-validation method based on the $\kappa$-nearest neighbours as detailed in \cite{F06} page 116. 

For each value of sampling mesh $\delta$, the regression operator $m(\cdot)$ is estimated over a grid of 50 different fixed curves $x$ and the whole procedure is repeated over $N=500$ replications. Finally the MISE is calculated, for each value of $\delta$, according to the following equation
$$
MISE(\delta) := \dfrac{1}{N}\sum_{k=1}^N \dfrac{1}{50}\sum_{j=1}^{50} \left(m(X_j^{(k)}) - \widehat{m}_\delta(X_j^{(k)}) \right)^2.
$$
Observe that $\widehat{m}_\delta(\cdot) $ the estimator of $m(\cdot)$ depends on the sampling mesh $\delta$, so is the MISE. 
\begin{figure}[h!]
\begin{center}
\includegraphics[width=12cm,height=9cm]{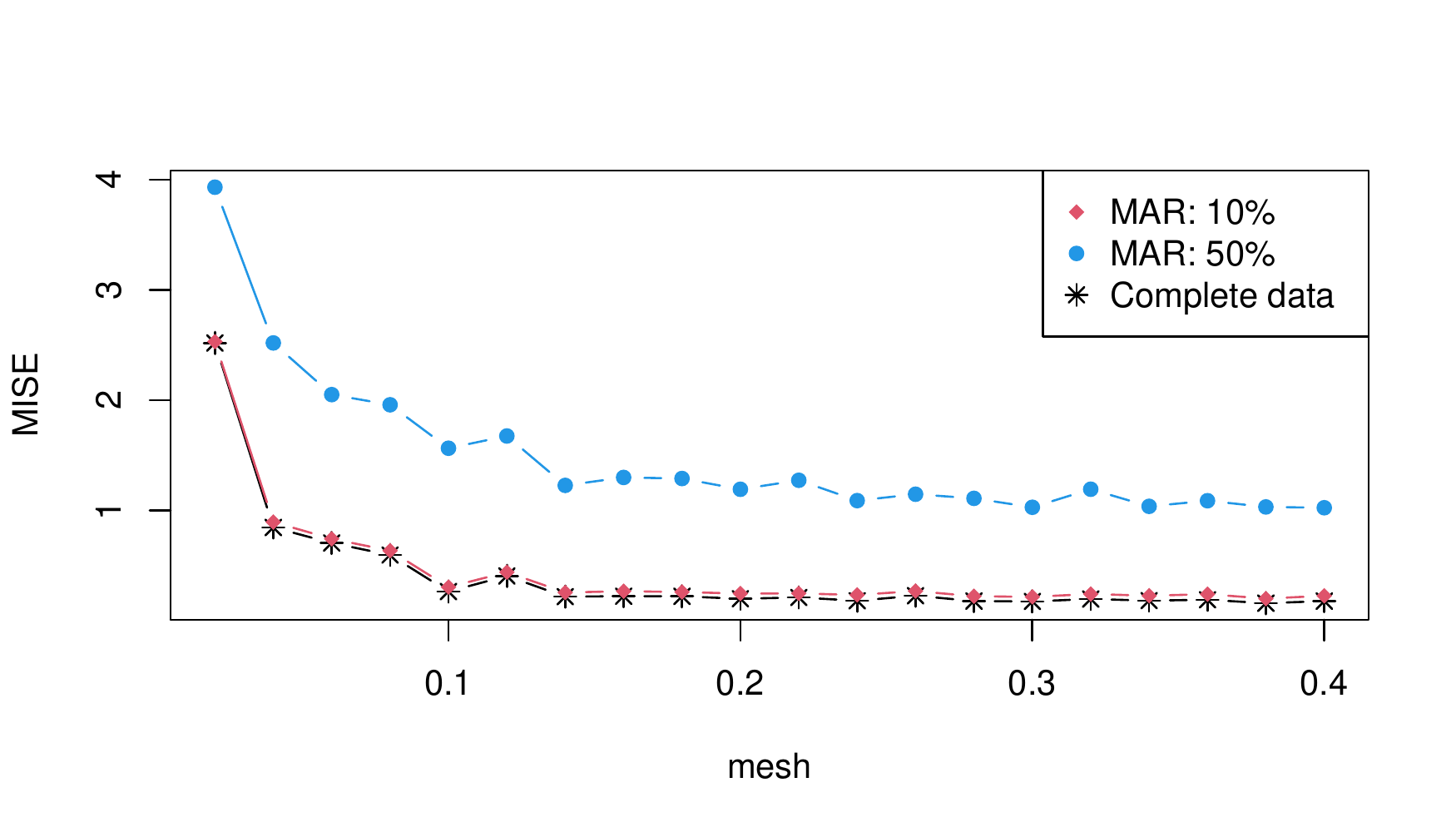}
\vspace{-0.3cm} \caption{\label{MISE}
 The $MISE(\delta)$ obtained for different values of sampling mesh $\delta$ and several missing at random rates.}
\end{center}
\end{figure}
Figure \ref{MISE} displays the values of MISE obtained for different values of sampling mesh $\delta$ and a missing at random rate of 10\%, 50\% and 0\% (complete data), respectively. One can observe that higher is the missing at random rate, higher will be errors in estimating the regression operator. 
\begin{table}[h]
\centering
\begin{tabular}{ |c|c|c|c| c|c|c|} 
\hline
& $\delta^\star$ & MISE($\delta^\star$) & $\overline{MISE(\delta)}$ & $Q_{25\%}$ & $Q_{50\%}$ & $Q_{75\%}$\\
\hline
Complete data & 0.30 & 0.0476 & 0.0562 & 0.0510 & 0.0524 & 0.0550\\
\hline
MAR=10\% & 0.36 & 0.0555 & 0.0643 & 0.0587 & 0.0612 & 0.0635\\
\hline
MAR=50\% & 0.38 & 0.0635 & 0.0767 & 0.0714 & 0.0750 & 0.0770\\
\hline
\end{tabular}
\caption{\label{tab2}The optimal sampling mesh, $\delta^\star$, obtained for different MAR rates and some summary statistics of the MISE($\delta$).}
\end{table}

Table \ref{tab2} shows that higher is the missing at random rate longer we need to observe the underlying process to collect the $n=200$ observations to be able to reasonably estimate the regression operator. Indeed, when the data is complete the optimal time interval $T^\star = n\delta^\star = 200\times 0.3 = 60$. However, when $MAR=10\%$ (resp. 50\%) the optimal time interval is equal to $T^\star = 200\times 0.36 = 72$ (resp. $T^\star = 200\times 0.38 = 76$). Consequently, it can be concluded that when the missing at random mechanism is heavily affecting the response variable, we need to collect data over a longer period of time. This allows to get sufficient information about the dynamic of the underlying continuous time process and therefore get a better estimate of the regression operator.

\section{Application to conditional quantiles}\label{sec5}

Let  $x\in {\cal E}$ be fixed and $y\in \mathbb{R}$,  then if $\psi_y(Y)=\1_{ \left\{ ]-\infty,  \ y] \right\}}(Y)$
 the operator $m_\psi(x,y)$ is no more but the  conditional cumulative distribution function (df) of $Y$ given $X = x$, namely  
 $\ F(y|x)= \mathbb{P}(Y \leq y | X=x)$
which may be estimated by   
$\widehat{F}_T(y |x):=\widehat{m}_{\psi,T}(x,y).$  
For a given $\alpha \in(0, 1),$  the $\alpha^{th}$-order conditional quantile of the distribution of $Y$ given $X = x$ is defined as 
$q_\alpha(x)=\inf\{y \in \mathbb{R}: \ F(y| x)  \geq \alpha \}.$

Notice that, whenever   $F (\cdot | x )$ is strictly
increasing and continuous in a neighbourhood of  $q_\alpha(x)$, the function 
$F (\cdot | x )$ has a unique quantile of order $\alpha$ at a point 
$q_\alpha(x)$, that is $F(q_\alpha(x) | x)=\alpha.$
In such case 
$$q_\alpha(x)=F^{-1} (\alpha | x)=\inf\{y \in \mathbb{R}: \ F(y| x)  \geq \alpha \},$$
which may be estimated uniquely  by 
$\widehat{q}_{T,\alpha}(x)=\widehat{F}_{T}^{-1}(\alpha| x)$.
Conditional quantiles has been widely studied in the literature when  the predictor $X$ is of finite dimension, see for instance,  \cite{G2003} and  \cite{F-al05} for  dependent functional data.

\begin{itemize}
\item[$(a)$] {\bf Almost sure pointwise and uniform convergence}
\end{itemize}

 Under the same conditions of Theorem \ref{thm1}, the statement (\ref{Th-PointwiseConv})  still holds for the estimator of the cumulative conditional distribution function $\widehat{F}_T(y|x)$. That is $\widehat{F}_T(\alpha|x)$ converges, almost surely, towards $F(y|x)$ with a rate $\OH(h_T^\beta) + \OH(\sqrt{\log T/(T\phi(h_T))}).$
 
 Consequently, since $F(q_\alpha(x) | x)=\alpha= \widehat{F}_T(\widehat{q}_{T,\alpha}(x) | x)$ and   $\widehat{F}_T (\cdot | x )$  is continuous and strictly increasing, then we have $\forall \epsilon>0, \ \exists \eta(\epsilon)>0, \  \forall y, \quad 
 \big| \widehat{F}_T(y | x)- \widehat{F}_T(q_\alpha(x) | x) \big| \leq 
  \eta(\epsilon) \quad \Rightarrow \quad | y-  q_\alpha(x) | \leq  \epsilon$
 which implies that, $ \forall \epsilon>0, \ \exists \eta(\epsilon)>0,$ 
 \begin{eqnarray}\label{convQuantile}
\mathbb{P}\left( \left|    \widehat{q}_{T,\alpha}(x) - q_\alpha(x)             \right|\geq \eta(\epsilon) \right)
  \leq 
 \mathbb{P}\left( \left|
 \widehat{F}_T(\widehat{q}_{T,\alpha}(x) | x)-\widehat{F}_T(q_\alpha(x) | x)\right|
 \geq \eta(\epsilon) \right) \nonumber\\
 =
 \mathbb{P}\left( |
 F(q_\alpha(x) | x)-\widehat{F}_T(q_\alpha(x) | x)
 \geq \eta(\epsilon) \right).
 \end{eqnarray}
 Therefore, the statement 
 (\ref{Th-PointwiseConv})  still holds for the  conditional quantile estimator  
 $\widehat{q}_{T,\alpha}(x)$ whenever conditions of Theorem 
 \ref{thm1} are satisfied.  \cite{F-al05} derived similar pointwise convergence rate by  inverting the estimator of  the conditional cumulative distribution function. Their result has been  obtained under  mixing condition and additional assumptions on the  joint distribution,  and the Lipschitz condition upon $F(y |x)$ and its derivatives with respect $y$. 
	
Regarding the almost sure uniform convergence, observe that under  conditions of Theorem \ref{uniform}, the statement   (\ref{V-unifome1}) still holds  true   for the 
$\sup_{y\in S}\sup_{x \in {\cal C}} |\widehat{F}_T(y(x) - F_T(y(x)|$,  
 when  $\psi_y(Y)$   is  replaced by $\mathbf{1}_{ \{ ]-\infty,  \ y] \}}(Y)$.  Moreover,  assume that, for fixed $x_0 \in {\cal C}$, $F( y | x_0)$ is differentiable at $q_\alpha(x_0)$ with
$\frac{\partial }{\partial y}F( y | x_0)|_{y=q_\alpha(x_0)}:=g(q_\alpha(x_0) | x_0)>\nu>0$, and $g(\cdot | x)$ is  uniformly continuous  for all $x \in {\cal C}$.
 Knowing that $\widehat{F}_T(\widehat{q}_{T, \alpha}( x )| x) = F(q_{\alpha}( x )| x )=\alpha$ and making use of a Taylor's expansion of the function $F(\widehat{q}_{T, \alpha}( x )| x )$ around $q_\alpha( x )$, we can write 

\vskip -6mm
\begin{eqnarray}\label{VitesseQuntile}
F(\widehat{q}_{T, \alpha}( x )| x )- F(q_{\alpha}( x )| x )=
\left(\widehat{q}_{T, \alpha}( x )-  q_{\alpha}( x ) \right)g\left(q^*_{T, \alpha}( x )| x \right)
\end{eqnarray}
where $q^*_{T, \alpha}( x )$ lies between $q_{\alpha}( x )$ and 
$\widehat{q}_{T, \alpha}( x )$. It follows  then from (\ref{VitesseQuntile}) that  the inequality  (\ref{convQuantile})   
still hods true uniformly in $x$ and $y$. Moreover, the fact that 
$\widehat{q}_{T, \alpha}( x )$ converges (a.s.) towards $q_{\alpha}( x )$ as $T$ goes to infinity, combined with the uniformly continuity of $g(\cdot | x)$, allow to write that
%
\begin{eqnarray}\label{VitesseQuntile2}
\sup_{x \in {\cal C}}\big|\widehat{q}_{T, \alpha}( x )- q_{\alpha}( x )\big|
\sup_{x \in {\cal C}}\big|g\left(q_{\alpha}( x )| x \right)\big|= 
O_{a.s.}\left(\sup_{y\in S}\sup_{x \in {\cal C}} |\widehat{F}_T(y|x) - F(y|x)|  \right).
\end{eqnarray}
Since  $g(q_{\alpha}( x ) |x)$  is uniformly bounded from below, we can then clame that the estimator $\widehat{q}_{T, \alpha}( x )$ converges uniformly  towards $q_\alpha(x)$  with the same convergence rate given in (\ref{V-unifome1}), as $T$ goes to infinity. 

\begin{itemize}
\item[$(b)$] {\bf Confidence intervals}
\end{itemize}

Confidence bounds for the conditional quantiles $q_\alpha(x)$  may be obtained  according to the following steps. First, consider a Taylor's expansion of $\widehat{F}_T(\cdot|x)$ around $q_\alpha(x)$ and making use of the fact that 
$\widehat{q}_{T, \alpha}( x )$ converges (a.s.) towards $q_{\alpha}( x )$ as $T$ goes to infinity, one gets
%
\begin{eqnarray}\label{VitesseQuntile2-0}
\widehat{q}_{T, \alpha}( x )- q_{\alpha}( x )
=- \frac1{\widehat{g}_T\left(q_{\alpha}( x )| x \right)}
 \left(\widehat{F}_T(q_\alpha(x)|x) - F(q_\alpha(x)|x)  \right),
\end{eqnarray}
where $\widehat{g}_T(\cdot|x)$ is a consistent estimator of $g(\cdot|x)$. Then, replacing 
$\psi_y(Y)$ by  the indicator function, we get under  
conditions of Corollary \ref{colorlarryNor},  the following 
  $(1-\alpha)100\%$ confidence bands for  $q_\alpha(x)$ 
\begin{eqnarray}\label{Corollay-intr}
\widehat{q}_{T, \alpha}( x )\  \pm c_{1-\alpha/2} \frac{M_{T,1} \widehat{g}_T\left(\widehat{q}_{T,\alpha}( x )| x \right)}{\sqrt{M_{T,2}}}\sqrt{\frac{\alpha(1-\alpha)}{T F_{x,T}(h_T) p_T(x)}}, \quad \text{as} \ T\to \infty.
\end{eqnarray}
%

%
%

\section{Proofs}\label{secProofs}
%

In this section and for sake of simplification the subscript $\psi$ will be omitted. Consider the following quantities 
\begin{eqnarray}
Q_T(x,y) &:=& (\widehat{m}_{T,2}(x,y) - \overline{m}_{T,2}(x,y)) - m(x,y) (\widehat{m}_{T,1}(x,y) - \overline{m}_{T,1}(x,y)) \label{Q},\\
R_T(x,y) &:=& - B_T(x,y) (\widehat{m}_{T,1}(x,y) - \overline{m}_{T,1}(x,y)) \label{R}.
\end{eqnarray}
We have then 
\begin{eqnarray}\label{decomp}
\widehat{m}_T(x,y) - m(x,y) = \widehat{m}_T(x,y) - C_T(x,y) + B_T(x,y) = B_T(x,y) + \frac{Q_T(x,y) + R_T(x,y)}{\widehat{m}_{T,1}(x,y)},
\end{eqnarray}
We start first by stating  some  technical lemmas that will be used later.
\begin{lemma}\label{lem1}
Assume that assumptions (A1)-(A2) are satisfied, then we have for any $j\geq 1$ and $\ell\geq 1$
\begin{itemize}
\item[$(i)$] $\frac{1}{\phi(h_T)} \E\left(\Delta_{t}^\ell (x) | \F_{j-2} \right) = M_{\ell} f_{t, T_{j-2}}(x) + \mathcal{O}_{a.s.}\left(\frac{g_{t, T_{j-2}, x}(h_T)}{\phi(h_T)} \right),$
\item[$(ii)$]  $\frac{1}{\phi(h_T)} \E\left(\Delta_{t}^\ell(x) \right) = M_\ell f(x) + o(1)$.
\end{itemize}
\end{lemma}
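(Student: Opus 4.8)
The plan is to reduce both identities to the asymptotics of the conditional and unconditional small-ball distribution functions $F_x^{\F_{j-2}}$ and $F_x$, by first turning the power $K^\ell$ of the kernel into a boundary term plus an integral of its derivative, and then substituting the ergodic expansions from (A2). Throughout I abbreviate $W:=d(x,X_t)$, so that $\Delta_t(x)=K(W/h_T)\,\mathbf{1}_{\{W\le h_T\}}$ since $K$ is supported on $[0,1]$.

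First I would use that $K$ is of class $\mathcal{C}^1$ on $[0,1]$ with $K(1)>0$ (assumption (A1)) to write the elementary identity $K^\ell(u)=K^\ell(1)-\int_u^1 (K^\ell)'(v)\,dv$ for $u\in[0,1]$. Applying it to $u=W/h_T$ and rewriting $\int_{W/h_T}^1(K^\ell)'(v)\,dv\,\mathbf{1}_{\{W\le h_T\}}=\int_0^1 (K^\ell)'(v)\,\mathbf{1}_{\{W\le vh_T\}}\,dv$ (both sides vanish when $W>h_T$), then taking conditional expectation and interchanging expectation and integral by Fubini — licit because $\int_0^1|(K^\ell)'|<\infty$ by (A1) and the integrand is bounded — gives the key representation
$$\E\!\left(\Delta_t^\ell(x)\mid \F_{j-2}\right)=K^\ell(1)\,F_x^{\F_{j-2}}(h_T)-\int_0^1 (K^\ell)'(v)\,F_x^{\F_{j-2}}(vh_T)\,dv.$$
The very same computation with the unconditional law yields the identical formula with $F_x^{\F_{j-2}}$ replaced by $F_x$, which I will use for part (ii).

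Next I would substitute the conditional expansion (A2)(ii), namely $F_x^{\F_{j-2}}(w)=\phi(w)\,f_{t,T_{j-2}}(x)+g_{t,T_{j-2},x}(w)$ (valid since $T_{j-2}\le t$), and divide by $\phi(h_T)$. The boundary term produces $K^\ell(1)f_{t,T_{j-2}}(x)$ plus $K^\ell(1)\,g_{t,T_{j-2},x}(h_T)/\phi(h_T)$, while the integral splits into a main piece $f_{t,T_{j-2}}(x)\int_0^1 (K^\ell)'(v)\,\phi(vh_T)/\phi(h_T)\,dv$ and a residual $\int_0^1 (K^\ell)'(v)\,g_{t,T_{j-2},x}(vh_T)/\phi(h_T)\,dv$. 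For the main piece I would invoke (A2)(iv), $\phi(vh_T)/\phi(h_T)=\tau_0(v)+o(1)$ uniformly in $v\in[0,1]$, to pass to the limit under the integral (legitimate by the uniformity of the $o(1)$ and the finiteness of $\int_0^1|(K^\ell)'|$), recovering $K^\ell(1)-\int_0^1 (K^\ell)'(v)\tau_0(v)\,dv=M_\ell$. Collecting the leading terms yields exactly $M_\ell f_{t,T_{j-2}}(x)$.

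The one delicate point, which I expect to be the main (if modest) obstacle, is absorbing every residual into the stated $\OH_{a.s.}\!\left(g_{t,T_{j-2},x}(h_T)/\phi(h_T)\right)$. The term $K^\ell(1)\,g_{t,T_{j-2},x}(h_T)/\phi(h_T)$ is already of that order; for the residual integral I would use the a.s. boundedness of $g_{t,s,x}(\cdot)/\phi(\cdot)$ from (A2)(ii) together with $\phi(vh_T)/\phi(h_T)=\tau_0(v)+o(1)$, so that $g_{t,T_{j-2},x}(vh_T)/\phi(h_T)$ is a.s. dominated by an integrable multiple of the bounded $\tau_0(v)$, keeping the whole integral within the announced order. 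This establishes (i). For (ii) I would run the identical argument with the unconditional expansion (A2)(i), whose remainder is the sharper $o(\phi(w))$; after dividing by $\phi(h_T)$ and applying (A2)(iv), every residual becomes $o(1)$, which delivers $M_\ell f(x)+o(1)$ and completes the proof.
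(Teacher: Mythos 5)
Your proof is correct and takes essentially the same route as the paper's, which simply defers to the corresponding lemma in La\"ib and Louani (2010): write $K^\ell(u)=K^\ell(1)-\int_u^1 (K^\ell)'(v)\,dv$, pass to the representation $\E(\Delta_t^\ell(x)\mid\F_{j-2})=K^\ell(1)F_x^{\F_{j-2}}(h_T)-\int_0^1 (K^\ell)'(v)F_x^{\F_{j-2}}(vh_T)\,dv$, and substitute the expansions of (A2)(i)--(ii) together with (A2)(iv) to isolate $M_\ell$ and absorb the residuals. Your handling of the remainder terms matches the level of precision of the cited argument, so there is nothing to flag.
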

\begin{proof}  The proof is similar to the proof of Lemma  of \cite{LL10}.
\end{proof}		
\begin{lemma}\label{lem2}
Let $(Z_n)_{n\geq 1}$ be a sequence of real martingale differences with respect to the sequence of $\sigma$-fields $\left( \F_n = \sigma(Z_1, \dots Z_n)\right)_{n\geq 1}$ where $\sigma(Z_1, \dots, Z_n)$ is the $sigma$-field generated by the random variable $Z_1, \dots, Z_n$. Set $S_n = \sum_{i=1}^n Z_i.$ For any $p\geq 2$ and any $n\geq 1$, assume that there exist some nonnegative constants $C$ and $d_n$ such that $\E\left(Z_n^p | \F_{n-1} \right) \leq C^{p-2} p! d_n^2$ almost surely. Then, for any $\epsilon >0,$ we have
$$
\mathbb{P}\left(|S_n| > \epsilon \right) \leq 2 \exp\left\{-\frac{\epsilon^2}{2(D_n + C\epsilon)} \right\},
$$
where $D_n = \sum_{i=1}^n d_i^2.$
\end{lemma}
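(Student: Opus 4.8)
The statement is a Bernstein-type exponential inequality for martingale differences, and I would establish it by the classical Chernoff (exponential Markov) method combined with an iterated-conditioning control of the moment generating function. The plan is first to prove the one-sided tail bound $\PR(S_n > \epsilon) \le \exp\{-\epsilon^2/(2(D_n + C\epsilon))\}$, and then to obtain the matching lower tail by applying the identical argument to the martingale difference sequence $(-Z_i)_{i\ge 1}$, whose conditional moments are governed by the same hypothesis (read for the absolute moments $\E(|Z_i|^p\mid\F_{i-1})$, so that $\lambda^p\,\E(Z_i^p\mid\F_{i-1})\le |\lambda|^p\,\E(|Z_i|^p\mid\F_{i-1})$ for every real $\lambda$). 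A union bound over the two tails then produces the factor $2$.

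For the upper tail, fix $\lambda \in (0, 1/C)$ and apply Markov's inequality to the nonnegative random variable $e^{\lambda S_n}$, giving $\PR(S_n > \epsilon) \le e^{-\lambda \epsilon}\,\E(e^{\lambda S_n})$. The heart of the argument is to bound $\E(e^{\lambda S_n})$ by peeling off one increment at a time. Since $S_{n-1}$ is $\F_{n-1}$-measurable, the tower property yields $\E(e^{\lambda S_n}) = \E\{e^{\lambda S_{n-1}}\,\E(e^{\lambda Z_n}\mid \F_{n-1})\}$, so it suffices to dominate the conditional moment generating function $\E(e^{\lambda Z_n}\mid\F_{n-1})$ uniformly. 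Expanding $e^{\lambda Z_n} = 1 + \lambda Z_n + \sum_{p\ge 2}\lambda^p Z_n^p/p!$ and taking the conditional expectation, the first-order term vanishes because $\E(Z_n\mid\F_{n-1}) = 0$, while the moment hypothesis $\E(Z_n^p\mid\F_{n-1})\le C^{p-2}p!\,d_n^2$ dominates the remaining terms by a convergent geometric series, convergence requiring precisely $\lambda C < 1$. Using $1 + u \le e^u$ then gives $\E(e^{\lambda Z_n}\mid\F_{n-1}) \le \exp\{d_n^2\lambda^2/(2(1-\lambda C))\}$ almost surely, and iterating the factorization over $i = n, n-1, \dots, 1$ (with $\F_0$ trivial) produces $\E(e^{\lambda S_n}) \le \exp\{D_n\lambda^2/(2(1-\lambda C))\}$ with $D_n = \sum_{i=1}^n d_i^2$.

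It then remains to optimize $\PR(S_n > \epsilon) \le \exp\{-\lambda\epsilon + D_n\lambda^2/(2(1-\lambda C))\}$ over $\lambda \in (0, 1/C)$. The choice $\lambda = \epsilon/(D_n + C\epsilon)$, which lies in $(0,1/C)$ as soon as $D_n>0$, makes $1 - \lambda C = D_n/(D_n + C\epsilon)$ and collapses the exponent to $-\epsilon^2/(2(D_n + C\epsilon))$, establishing the upper tail; the lower tail follows symmetrically.

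I expect the conditional moment generating function estimate to be the main obstacle: it is where the martingale-difference property is used to kill the linear term, and where the geometric summation forces the constraint $\lambda C < 1$, so one must check that the optimizing $\lambda$ respects this bound uniformly along the iteration. The remaining ingredients --- the tower property, the inequality $1+u\le e^u$, the optimization in $\lambda$, and the union bound over the two tails --- are routine.
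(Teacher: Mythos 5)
Your proposal is correct in its overall architecture, but it takes a genuinely different route from the paper: the paper gives no argument at all for this lemma, simply citing Theorem 8.2.2 of de la Pe\~na and Gin\'e (1999), whereas you supply the self-contained classical proof (Chernoff bounding, peeling off increments by the tower property, bounding the conditional moment generating function via the moment hypothesis, optimizing in $\lambda$, and symmetrizing for the two-sided bound). This is exactly the argument that underlies the cited theorem, so your proof ``buys'' a self-contained treatment at the cost of having to track constants explicitly --- and that is where the one real issue lies.

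The issue is a factor-of-two mismatch between the hypothesis as stated and your key intermediate bound. With the hypothesis exactly as written, $\E\left(Z_n^p \mid \F_{n-1}\right) \leq C^{p-2} p!\, d_n^2$, the series expansion gives
\begin{equation*}
\E\left(e^{\lambda Z_n}\mid \F_{n-1}\right) \;\leq\; 1 + \sum_{p\geq 2}\frac{\lambda^p}{p!}\,C^{p-2}p!\,d_n^2 \;=\; 1 + \frac{\lambda^2 d_n^2}{1-\lambda C} \;\leq\; \exp\left\{\frac{\lambda^2 d_n^2}{1-\lambda C}\right\},
\end{equation*}
\emph{without} the factor $2$ in the denominator that you claim. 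Your claimed bound $\exp\{\lambda^2 d_n^2/(2(1-\lambda C))\}$ requires the standard Bernstein normalization $\E\left(|Z_n|^p \mid \F_{n-1}\right) \leq \frac{p!}{2} C^{p-2} d_n^2$, which is how the condition appears in the cited reference. With the hypothesis as literally stated, your method (with the optimization $\lambda = \epsilon/(2D_n + C\epsilon)$) yields the slightly weaker conclusion $2\exp\{-\epsilon^2/(2(2D_n + C\epsilon))\}$; equivalently, the stated conclusion holds after replacing $d_n^2$ by $2d_n^2$. This discrepancy is inherited from the paper's own loose transcription of the reference and is immaterial for every application in the paper (the constant is absorbed into the $\OH(\cdot)$ rates), but a careful write-up should either adjust the hypothesis to the $p!/2$ normalization or accept the $2D_n$ in the conclusion. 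A second, smaller point you handle correctly in passing: to justify taking conditional expectation termwise in the series, and to run the symmetric argument for the lower tail, the moment hypothesis must be read for absolute moments $\E\left(|Z_n|^p \mid \F_{n-1}\right)$, since for odd $p$ a bound on $\E\left(Z_n^p \mid \F_{n-1}\right)$ alone controls nothing.
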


\begin{proof}
See Theorem 8.2.2  of  \cite{PG99}.
\end{proof}


%
%
%
%
%
%
\vskip 2mm
\noindent {\bf Proof of Theorem \ref{thm1}.}  The proof of Theorem \ref{thm1} is a consequence of decomposition (\ref{Q}) and  Lemmas \ref{lem3}-\ref{mhat}-\ref{BR} established below. $\hfil \Box$
%
\begin{lemma}\label{lem3} Assume  (A1)-(A3) tougher with   conditions (\ref{cond}) hold true.  
	Then, we have  for $T$ sufficiently large
	\begin{eqnarray*}
		\widehat{m}_{T,j}(x,y) - \overline{m}_{T,j}(x,y) = \OH_{a.s.}\left(\sqrt{\frac{\log T}{T\phi(h_T)}} \right) \quad \text{with} \quad j\in \{1, 2\}.
	\end{eqnarray*}
\end{lemma}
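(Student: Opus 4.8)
The plan is to treat the centred difference as a normalized sum of martingale differences over the $\delta$-blocks of the partition $(T_k=k\delta)_{1\le k\le n}$ and then to invoke the Bernstein-type inequality of Lemma \ref{lem2}. Writing $g_t:=\zeta_t(\psi_y(Y_t))^{j-1}\Delta_t(x)$, the definitions of $\widehat m_{T,j}$ and $\overline m_{T,j}$ give
$$\widehat m_{T,j}(x,y)-\overline m_{T,j}(x,y)=\frac{1}{n\E(Z_1(x))}\sum_{k=1}^{n} D_{T,k},\qquad D_{T,k}:=\int_{T_{k-1}}^{T_k}\big(g_t-\E(g_t\mid\F_{t-\delta})\big)\,dt.$$
First I would verify the martingale structure. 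Since $\F_{k-2}\subseteq\F_{t-\delta}$ for every $t\in[T_{k-1},T_k]$ (as recorded in Section \ref{sec2}), the tower property yields $\E(g_t-\E(g_t\mid\F_{t-\delta})\mid\F_{k-2})=0$ pointwise, hence $\E(D_{T,k}\mid\F_{k-2})=0$, while $D_{T,k}$ is $\F_{k}$-measurable. Because the conditioning lags by two blocks rather than one, I would split $\sum_k D_{T,k}$ into its odd- and even-indexed subsums; along the filtrations $\G_m:=\F_{2m-1}$ and $\F_{2m}$ respectively, each subsum becomes a genuine sequence of martingale differences to which Lemma \ref{lem2} applies.

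Next I would check the moment hypothesis of Lemma \ref{lem2}. Applying Jensen's inequality to the length-$\delta$ integral gives $\E(|D_{T,k}|^{p}\mid\F_{k-2})\le \delta^{p-1}\int_{T_{k-1}}^{T_k}\E(|g_t-\E(g_t\mid\F_{t-\delta})|^{p}\mid\F_{k-2})\,dt$, so the problem reduces to conditional moments of $g_t$. Using that $K$ is bounded (A1), that $\Delta_t(x)\le (\sup K)\,\1_{\{d(x,X_t)\le h_T\}}$ localises everything to the small ball, and the conditional moment controls (A3)(iii) on $\psi_y(Y_t)$ together with (A3)(iv$'$) on $\zeta_t$, I would bound $\E(|g_t|^{p}\mid\F_{k-2})$ by $C^{p-2}p!$ times $\E(\Delta_t^2(x)\mid\F_{k-2})$; Lemma \ref{lem1}(i) then gives $\E(\Delta_t^2(x)\mid\F_{k-2})=\OH_{a.s.}(\phi(h_T))$. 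Consequently the requirement $\E(|D_{T,k}|^{p}\mid\F_{k-2})\le C^{p-2}p!\,d_k^2$ holds with $d_k^2\asymp \delta^2\phi(h_T)$, so that $D_n:=\sum_{k}d_k^2\asymp T\delta\,\phi(h_T)$. For $j=1$ this step is immediate since $g_t=\zeta_t\Delta_t(x)$ is bounded; only the case $j=2$, where $\psi_y(Y_t)$ is merely integrable, uses the full strength of (A3)(iii).

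Finally I would calibrate the inequality. By Lemma \ref{lem1}(ii) one has $n\E(Z_1(x))\asymp T\phi(h_T)$, so the asserted bound is equivalent to $\sum_k D_{T,k}=\OH_{a.s.}(\sqrt{T\phi(h_T)\log T})$. Taking $\epsilon=\epsilon_0\sqrt{T\phi(h_T)\log T}$ in Lemma \ref{lem2}, the condition $\log T/(T\phi(h_T))\to 0$ from (\ref{cond}) forces $C\epsilon=o(D_n)$, so the exponent behaves like $\epsilon^2/(2D_n)\asymp \epsilon_0^2\log T$ and the probability bound is of order $T^{-c\,\epsilon_0^2}$. Choosing $\epsilon_0$ large enough to make this summable along a discrete grid of $T$-values, and exploiting the monotonicity in $T$ of the quantities involved to interpolate between grid points, the Borel--Cantelli lemma delivers the claimed almost sure rate for both $j=1$ and $j=2$.

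I expect the main obstacle to be the verification of the Bernstein moment hypothesis in the case $j=2$: because $\psi_y(Y_t)$ is controlled only in conditional moments, obtaining the sharp $C^{p-2}p!$ growth (rather than a crude bound) while correctly propagating the small-ball scaling $\phi(h_T)$ through the nested conditioning is the delicate point. The non-adaptedness produced by the two-block lag in $\F_{t-\delta}$ is a second, milder, technical nuisance, resolved by the odd/even decomposition described above.
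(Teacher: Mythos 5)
Your proposal is correct and follows essentially the same route as the paper: decomposition of the centred difference into $\delta$-block increments, a Bernstein-type exponential bound via Lemma \ref{lem2} with conditional moments controlled through double conditioning, (A3)(iii)--(A3)(iv$^\prime$) and Lemma \ref{lem1}, and finally Borel--Cantelli under (\ref{cond}). The one genuine divergence is your treatment of the martingale structure. The paper asserts that $\mathcal{L}_{T,j}$ is $\F_{j-1}$-measurable and hence that $(\mathcal{L}_{T,j})_{j\geq 1}$ is a martingale difference sequence with respect to $(\F_{j-1})_{j\geq 1}$; as stated this is not right, since $\mathcal{L}_{T,j}$ involves the data on $[T_{j-1},T_j]$, which are not $\F_{j-1}$-measurable, and the centring by $\E(\cdot\,|\,\F_{t-\delta})$ only guarantees the lag-two property $\E(\mathcal{L}_{T,j}\,|\,\F_{j-2})=0$, not $\E(\mathcal{L}_{T,j}\,|\,\F_{j-1})=0$. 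Your odd/even splitting along the shifted filtrations is exactly the standard repair: each subsum is a genuine martingale difference sequence, Lemma \ref{lem2} applies legitimately to each, and recombining costs only a factor $2$ in the bound. So at the single delicate point your route is, if anything, more careful than the paper's. Two small cautions: in Lemma \ref{lem2} the $d_k^2$ must be constants, so your bound $\E(\Delta_t^2(x)\,|\,\F_{k-2})=\OH_{a.s.}(\phi(h_T))$ has to be made deterministic by dominating the random function $f_{t,T_{k-2}}(x)$ by the deterministic envelope $b_{t,2\delta}(x)$ and invoking (A2)(v), which is how the paper defines its $d_{j-2}^2$; and your interpolation between grid points of $T$ via monotonicity is a step the paper itself glosses over, applying Borel--Cantelli along $T_n=[T]$ without further comment.
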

\noindent {\bf   Proof of Lemma \ref{lem3} }.   Consider the case where $j=2$ and define the process 
	\begin{eqnarray*}
	L_T:=	\widehat{m}_{T,2}(x,y) - \overline{m}_{T,2}(x,y) 
		&=& \frac{1}{n\E(Z_1(x))} \sum_{j=1}^n \int_{T_{j-1}}^{T_j} \left[ \zeta_t \psi_y(Y_t) \Delta_t(x) - \E\left(\zeta_t \psi_y(Y_t) \Delta_t(x) | \F_{t-\delta} \right) \right] dt\\
		&=:& \frac{1}{n\E(Z_1(x))} \sum_{j=1}^n \mathcal{L}_{T,j}(x,y),
	\end{eqnarray*}
Observe that, 
for any $j\geq 1$ and $t\in[ T_{j-1}, T_j]$, ${\cal F}_{j-2}\subset {\cal F}_{t-\delta} \subset {\cal F}_{j-1}$, therefore
$L_{T,j}(x,y)$ is ${\cal F}_{j-1}$-measurable,
$\mathbb{E}\left( |L_{T,j}(x,y)|\right)<\infty$ provided $\mathbb{E}(\zeta^2_t)<\infty$ and $\mathbb{E}(X^2_t)<\infty$
(in view of Cauchy-schwartz inequality). Moreover,   letting
$\eta_j:=\int_{T_{j-1}}^{T_j} \left[\zeta_t (\psi_y(Y_t) \Delta_t(x)\right]dt,$
then
$L_{T,j}(x,y)=\eta_j-\mathbb{E}[\eta_j | {\cal F}_{t-\delta}]$
and 

$\mathbb{E}\left\{L_{T,j}(x,y) | {\cal F}_{j-2}\right\}=
\mathbb{E}\left\{\mathbb{E}\left[ \eta_j| {\cal F}_{t-\delta}\right]|  {\cal F}_{j-2}\right\}- \mathbb{E}\left\{\mathbb{E}\left[ \eta_j| {\cal F}_{t-\delta}\right]|  {\cal F}_{j-2}\right\}=0.$

Hence  $(L_{T,j}(x,y))_{j\geq 1}$ is a sequence of martingale differences   with respect to the family  $({\cal F}_{j-1})_{j\geq 1}$. 
%
To be able to apply Lemma \ref{lem2} 
 we need first to check its  conditions. 	
	Applying Jensen and Minkowski inequalities, we get, for any $\kappa\geq 2$, that 
	\begin{eqnarray*}
		|\E\left[\mathcal{L}^\kappa_{T,j}(x,y) | \F_{j-2} \right]|
%
	&\leq& 	 2^\kappa \int_{T_{j-1}}^{T_j} \E\left[|\zeta_t \psi_y(Y_t) \Delta_t(x)|^\kappa | \F_{j-2} \right] dt.
	\end{eqnarray*}
	Using  successfully a double conditioning with respect to the $\sigma$-fields $\ST_{t,\delta}^0$ and $\ST_{t,\delta}^1$ combined with   (A3)(i), 
	 (A3)(iii), (A3)(iv), (A3)(i$v^\prime$)   and the fact that $\zeta$ and $Y$ anr conditionally independent given $X$,  one gets  for any $t\in [T_{j-1}, \   T_j]$    and any  $p\geq 2$, that 
	\begin{eqnarray*}
		\E\left[|\zeta_t \psi_y(Y_t) \Delta_t(x)|^p | \F_{j-2} \right]
	%
		%
		=\E\left\{ \Delta^p_t(x) U_p(X_t) W_p(X_t,y)\Big | \F_{j-2}\right\} 
		%
		%
		%
		%
		\leq C_{W,U} \; \E\left(\Delta_t^p(x) | \F_{j-2} \right),
	\end{eqnarray*}
	where $C_{W,P}=\max(\sup_x |U_p(x)|, \sup_{x,y}|W_p(x,y)|)$ which is a finite positive constant independent on $(x,y)$ in view of conditions  (A3)(iii)  and (A3)(i$v^\prime$) .   
	
		Since the kernel $K$ and the function $\tau_0$ are bounded from above by a positive constants $a_1$ and $c_0$ respectively,
		and the  function $f_{t, T_{j-2}}(x)$ is bounded by the deterministic function $b_{t,2\delta}(x)$; using then assumption (A2)(ii)	combined with Lemma \ref{lem1} to get 
	\begin{eqnarray}\label{Moemnts}
		\left|\E\left[\mathcal{L}^\kappa_{T,j}(x,y) | \F_{j-2} \right]\right| &= & 2^\kappa \int_{T_{j-1}}^{T_j} C_{W,U} \phi(h_T) \left[M_\kappa f_{t, T_{j-2}}(x) + \OH\left(\frac{g_{t, T_{j-2}, x}(h_T)}{\phi_y(h_T)} \right) \right] dt \nonumber\\
		%
	&\leq& \kappa! (2a_1)^{\kappa-2}  \phi(h_T)\left[(2a_1)^2 C_{U,W} \int_{T_{j-1}}^{T_j} b_{t,2\delta}(x)   dt + o(1) \right].	
	\end{eqnarray}
Letting  $d_{j-2}^2 := \phi_y(h_T)\left[ (2a_1)^2 C_{U,W} \int_{T_{j-1}}^{T_j} b_{t,2\delta}(x) dt + o(1)\right]$ and define

		$D_n = \frac{1}{n} \sum_{j=2}^n d_{j-2}^2 = \phi(h_T) \left\{(2a_1)^2 C_{W,U} \sum_{j=2}^n \int_{T_{j-1}}^{T_j} b_{t,2\delta}(x) dt +o(1)\right\}$. Since  $T=n\delta$, 
		it follows  from (A2)(v)  that
		$D_n = T\phi(h_T) \left[(2a_1)^2 C_{W,U} D_{2\delta}(x) + o(1) \right].$
	
	Moreover,  we have from the statement (ii) of Lemma \ref{lem1}    that $n\E\left(Z_1(x) \right) = \OH(T\phi(h_T))$.  Lemma \ref{lem2} combined with condition (\ref{cond}) allow  to write,  for  any $\epsilon_0>0$, that 
%
	\begin{eqnarray}\label{Pointwise}
		\PR\left(\Big|\widehat{m}_{T,2}(x,y) - \overline{m}_{T,2}(x,y)\Big| > \epsilon_0\sqrt{\frac{\log T}{T\phi(h_T)}}\right) &=& \PR\left(\Big|\sum_{j-1}^n {\cal L}_{T,j}(x,y)\Big| > n\E(Z_1(x)) \epsilon_0 \left(\frac{\log T}{T\phi(h_T)} \right)^{1/2}  \right) \nonumber\\
		%
		&\leq& 2\exp\left\{-c\epsilon_0^2 \log T  \right\} = \frac{2}{T^{c\epsilon_0^2}}
	\end{eqnarray}
	where $c$ is a positive constant. 	Let $T_n:=[T]$ be the integer part of $T$ and  choosing $\epsilon_0$ large  enough,
%
	and  using condition (\ref{cond}) with Borel-Cantelli Lemma to conclude that (\ref{Th-PointwiseConv}) is valid  since $\sum_{n\geq 1}\frac{1}{T_n^{c\epsilon_0}}<\infty$. $\hfill \Box$.

\begin{lemma}\label{mhat}
	Under assumption (A1)-(A3), we have
	$$
	\widehat{m}_{T,1}(x) \longrightarrow p(x), \quad\mbox{a.s.}\quad\quad \mbox{as}\quad T\rightarrow +\infty.
	$$
\end{lemma}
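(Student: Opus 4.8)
The claim is that $\widehat{m}_{T,1}(x) \to p(x)$ a.s. Here $\widehat{m}_{T,1}(x,y)$ is the estimator from (2.5) with $\psi_y \equiv 1$ (i.e. $i=1$ in the notation, the denominator-type quantity), so that $\widehat{m}_{T,1}(x) = \frac{\int_0^T \zeta_t \Delta_t(x)\,dt}{\int_0^T \Delta_t(x)\,dt}$ after suitable normalization. Let me sketch the proof.

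Let me think about what's being asserted and the cleanest route.

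**Key observation.** The quantity $\widehat m_{T,1}(x)$ is exactly the generalized-regression estimator applied with the "response" $\psi_y(Y_t)$ replaced by the constant function $1$ and with the indicator $\zeta_t$ playing the role of the response being averaged — or more precisely, it is the estimator of $\mathbb{E}(\zeta_t \mid X_t = x) = p(x)$ by (A3)(iv). So the statement is really a special case of the pointwise consistency machinery already developed. Let me use that.

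**Plan.** I would reduce this to the already-proved decomposition and lemmas. Specifically, apply the bias/fluctuation split exactly as in the proof of Theorem 2.1, but for the numerator-denominator pair $(\widehat m_{T,1}, \overline m_{T,1})$ rather than the full regression. First I would write $\widehat m_{T,1}(x) - \overline m_{T,1}(x) = \mathcal O_{a.s.}(\sqrt{\log T/(T\phi(h_T))}) = o_{a.s.}(1)$, which is precisely the $j=1$ case of Lemma 2.9 (lem3), already established; under condition (2.8) this fluctuation term vanishes a.s. Second I would identify the limit of the centering term $\overline m_{T,1}(x)$. This is the genuinely new content.

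**Identifying the limit.** By definition (2.7) with $i=1$, $\overline m_{T,1}(x) = \frac{1}{n\mathbb{E}(Z_1(x))}\int_0^T \mathbb{E}\{\zeta_t \Delta_t(x)\mid \mathcal F_{t-\delta}\}\,dt$. Using the nested $\sigma$-field structure $\mathcal F_{t-\delta}\subset \mathcal S^0_{t,\delta}$ and conditioning on $\mathcal S^0_{t,\delta}$ first, assumption (A3)(iv) gives $\mathbb{E}(\zeta_t\mid \mathcal S^0_{t,\delta}) = p(X_t)$ a.s. Hence $\mathbb{E}\{\zeta_t\Delta_t(x)\mid \mathcal F_{t-\delta}\} = \mathbb{E}\{p(X_t)\Delta_t(x)\mid \mathcal F_{t-\delta}\}$. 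The continuity condition in (A3)(iv), $\sup_{d(x,x')\le u}|p(x')-p(x)| = o(1)$, lets me replace $p(X_t)$ by $p(x)$ on the event $\{d(x,X_t)\le h_T\} = \{\Delta_t(x)\ne 0\}$ at the cost of an $o_{a.s.}(1)$ factor, since $\Delta_t(x)$ is supported where $d(x,X_t)\le h_T \to 0$. Thus $\mathbb{E}\{\zeta_t\Delta_t(x)\mid \mathcal F_{t-\delta}\} = (p(x)+o_{a.s.}(1))\,\mathbb{E}\{\Delta_t(x)\mid \mathcal F_{t-\delta}\}$. Applying Lemma 2.7 (lem1)(i) with $\ell=1$ to the latter conditional expectation, then averaging over $t$ and invoking (A2)(ii)–(iii) (so that $T^{-1}\int_0^T f_{t,t-\delta}(x)\,dt \to f(x)$ and the $g$-remainder is $o_{a.s.}(\phi(h_T))$), together with Lemma 2.7(ii) for the normalizing factor $\mathbb{E}(Z_1(x)) = \mathbb{E}\int_0^\delta \Delta_t(x)\,dt$, yields $\overline m_{T,1}(x) \to p(x)$ a.s. Combining with the vanishing fluctuation term gives $\widehat m_{T,1}(x) \to p(x)$ a.s.

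**Main obstacle.** The only genuinely delicate point is the replacement of $p(X_t)$ by $p(x)$ uniformly on the kernel support inside the conditional expectation — one must argue that the $o(1)$ from the continuity of $p$ survives the conditioning and the time-averaging, rather than treating $p(X_t)$ as a deterministic quantity. This is handled by factoring $p(X_t) = p(x) + (p(X_t)-p(x))$, bounding $|p(X_t)-p(x)|\mathbf 1_{\{d(x,X_t)\le h_T\}} \le \sup_{d(x,x')\le h_T}|p(x')-p(x)| = o_{a.s.}(1)$ pathwise, and carrying this uniform bound through the conditional expectation and the Riemann-sum/ergodic averaging exactly as in the bias computation of Lemma 2.9. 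Everything else is a direct transcription of the already-established pointwise-consistency argument with $\psi_y(Y_t)$ specialized to $\zeta_t$, so the proof is short and I would keep the calculations to the level of citing (lem1), (lem3), and (A3)(iv).
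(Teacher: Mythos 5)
Your proposal is correct and follows essentially the same route as the paper's own proof: the same decomposition $\widehat{m}_{T,1}(x)-p(x)=\bigl(\widehat{m}_{T,1}(x)-\overline{m}_{T,1}(x)\bigr)+\bigl(\overline{m}_{T,1}(x)-p(x)\bigr)$, with the fluctuation term killed by the $j=1$ case of Lemma \ref{lem3}, and the centering term evaluated by conditioning on the $\sigma$-field of (A3)(iv) to pass to $\E\{p(X_t)\Delta_t(x)\mid\F_{t-\delta}\}$, replacing $p(X_t)$ by $p(x)+o(1)$ on the kernel support via the continuity of $p$, and then invoking Lemma \ref{lem1} and (A2)(ii)--(iii) for the ergodic averaging. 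The only blemish is the throwaway formula $\widehat{m}_{T,1}(x)=\int_0^T\zeta_t\Delta_t(x)\,dt\big/\int_0^T\Delta_t(x)\,dt$ in your opening paragraph (the correct normalizer is the deterministic $n\E(Z_1(x))$, as you in fact use in the body), which does not affect the argument.
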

\begin{proof} Let us introduce the	following decomposition
	\begin{eqnarray}\label{Decpmaht}
\widehat{m}_{T,1}(x)-p(x)=\left(\widehat{m}_{T,1}(x) - \overline{m}_{T,1}(x)\right) + \left(\overline{m}_{T,1}(x) - p(x)\right) =: {\cal M}_{T,1}(x) + {\cal M}_{T,2}(x) 	
	\end{eqnarray}
 We have  from  Lemma \ref{lem3} that
	\begin{eqnarray}\label{M1}
	{\cal M}_{T,1} = \OH_{a.s.}\left(\left(\frac{\log T}{T\phi(h_T)} \right)^{1/2} \right)=o_{a.s}(1).
	\end{eqnarray}
	Let us now focus on the second term ${\cal M}_{T,2}$.  Using   a double conditioning with respect to  the $\sigma$-field ${\cal S}_{t-\delta, \delta}^1$, assumptions (A2)(ii)-(iii),  (A3)(iv)-(i$v^\prime$) and Lemma \ref{lem1} one gets
	\begin{eqnarray}\label{m1-bar}
		\overline{m}_{T,1}(x) 
		&=& \frac{1}{n\E(Z_1(x))} \int_0^T \E\left[\Delta_t(x) p(X_t) | \F_{t-\delta} \right] dt \nonumber\\
		&=& \frac{1}{n\E(Z_1(x))} \left(p(x) + o(1) \right) T \phi(h_T) \left\{M_1 \frac{1}{T}\int_0^T f_{t, t-\delta}(x) dt + \frac{1}{T} \int_0^T \OH\left(\frac{g_{t, t-\delta,x}(h_T)}{\phi(h_T)} \right) dt \right\} \nonumber \\
		&=& \left(p(x) + o(1) \right) \frac{1}{n\E(Z_1(x))}\; \OH(T\phi(h_T))= p(x)+o(1).
	\end{eqnarray}
	Thus  $\overline{m}_{T,1}(x)$ converges almost surely towards  $p(x)$ as $T\rightarrow +\infty$, which  achieve the proof of this lemma.
\end{proof}
\begin{lemma}\label{BR}
	Under hypotheses (A3)(i)-(ii), for a fixed $x\in {\cal E}$, we have
	\begin{eqnarray}\label{TermesBT-RT}
		B_T(x,y) = \OH_{a.s.}(h_T^\beta), \quad 
	%
	\end{eqnarray}
	\begin{eqnarray}\label{QT}
		R_T(x,y) = \OH_{a.s.}\left(h_T^\beta \left(\frac{\log T}{T\phi(h_T)}\right)^{1/2} \right) \quad \text{and} \quad 	
Q_T(x,y) = \OH_{a.s.}\left(\left( \frac{\log T}{T\phi(h_T)} \right)^{1/2} \right).
\end{eqnarray}	
\end{lemma}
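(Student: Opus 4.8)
The plan is to dispatch the three estimates in increasing order of difficulty, noting that the bounds for $R_T$ and $Q_T$ in (\ref{QT}) follow almost mechanically from Lemma \ref{lem3} once the bias is controlled, so that the only genuine work lies in proving $B_T(x,y)=\OH_{a.s.}(h_T^\beta)$. For the bias I would start from (\ref{Bais})--(\ref{mbarTi-1}) and write
$$
B_T(x,y)=\frac{\overline{m}_{T,2}(x,y)-m(x,y)\,\overline{m}_{T,1}(x,y)}{\overline{m}_{T,1}(x,y)},
$$
whose numerator, since $m(x,y)$ is a constant and $\overline{m}_{T,1}$ carries the factor $(\psi_y)^0=1$, equals
$$
\frac{1}{n\E(Z_1(x))}\int_0^T \E\big\{\zeta_t\big(\psi_y(Y_t)-m(x,y)\big)\Delta_t(x)\,\big|\,\F_{t-\delta}\big\}\,dt .
$$
The first step is to process the inner conditional expectation by the same double-conditioning device used in the proof of Lemma \ref{lem3}: conditioning successively on $\ST^1_{t,\delta}$ and $\ST^0_{t,\delta}$ and invoking the conditional independence of $\zeta$ and $Y$ given $X$ together with (A3)(i) and (A3)(iv), and using that $\Delta_t(x)$ is $\sigma(X_t)$-measurable, the factor $\zeta_t(\psi_y(Y_t)-m(x,y))$ collapses, given $X_t$, to $p(X_t)\big(m(X_t,y)-m(x,y)\big)$. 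The inner term thus reduces to $\E\{\Delta_t(x)\,p(X_t)\,(m(X_t,y)-m(x,y))\,|\,\F_{t-\delta}\}$.

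The decisive localization step is then to exploit the compact support of the kernel: because $K$ is supported on $[0,1]$, $\Delta_t(x)=K(d(x,X_t)/h_T)$ vanishes unless $d(x,X_t)\le h_T$, so on the event $\{\Delta_t(x)\neq 0\}$ the H\"older condition (A3)(ii) yields $|m(X_t,y)-m(x,y)|\le c\,d^\beta(x,X_t)\le c\,h_T^\beta$. Combining this with $0\le p(\cdot)\le 1$ gives
$$
\big|\E\{\zeta_t(\psi_y(Y_t)-m(x,y))\Delta_t(x)\,|\,\F_{t-\delta}\}\big|\le c\,h_T^\beta\,\E\{\Delta_t(x)\,|\,\F_{t-\delta}\}.
$$
Integrating over $[0,T]$ and dividing by $n\E(Z_1(x))$, the numerator is bounded by $c\,h_T^\beta$ times $\tfrac{1}{n\E(Z_1(x))}\int_0^T\E\{\Delta_t(x)\,|\,\F_{t-\delta}\}\,dt$, and by Lemma \ref{lem1} together with (A2)(iii)---exactly as in the computation (\ref{m1-bar}) but with the factor $p(X_t)$ replaced by $1$---this average converges a.s.\ to $1$. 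Since the denominator $\overline{m}_{T,1}(x,y)$ converges a.s.\ to $p(x)>0$ by Lemma \ref{mhat}, I conclude $B_T(x,y)=\OH_{a.s.}(h_T^\beta)$.

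The remaining two bounds are then immediate. For $R_T$, definition (\ref{R}) gives $R_T(x,y)=-B_T(x,y)\,(\widehat{m}_{T,1}(x,y)-\overline{m}_{T,1}(x,y))$, so multiplying $B_T=\OH_{a.s.}(h_T^\beta)$ by the rate $\OH_{a.s.}(\sqrt{\log T/(T\phi(h_T))})$ furnished by Lemma \ref{lem3} produces exactly the claimed order in (\ref{QT}). For $Q_T$, definition (\ref{Q}) expresses it as a linear combination, with the fixed coefficient $m(x,y)$, of the two centred differences $\widehat{m}_{T,2}-\overline{m}_{T,2}$ and $\widehat{m}_{T,1}-\overline{m}_{T,1}$, each of which is $\OH_{a.s.}(\sqrt{\log T/(T\phi(h_T))})$ by Lemma \ref{lem3}; hence $Q_T$ inherits the same rate. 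The hard part is entirely the double-conditioning reduction of the bias numerator: one must keep careful track of which of $\ST^0_{t,\delta}$ and $\ST^1_{t,\delta}$ carries the information on $X_t$, $Y_t$ and $\zeta_t$, and of the passage from conditioning on $\F_{t-\delta}$ to the blockwise filtration $\F_{j-2}$ on which Lemma \ref{lem1} is stated (for $t\in[T_{j-1},T_j]$). Once that reduction is secured, the kernel-support localization and Lemma \ref{lem1} make the bias bound routine, and $R_T$ and $Q_T$ require no further probabilistic input.
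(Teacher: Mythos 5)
Your proposal is correct and follows essentially the same route as the paper: the same ratio decomposition $B_T=\widetilde{B}_T(x,y)/\overline{m}_{T,1}(x)$, the same double-conditioning on $\ST^0_{t,\delta}$, $\ST^1_{t,\delta}$ to reduce the numerator to $p(X_t)\left(m(X_t,y)-m(x,y)\right)$, the same kernel-support localization combined with (A3)(ii) to extract the $h_T^\beta$ factor, and the bounds for $R_T$ and $Q_T$ read off from the definitions (\ref{R}), (\ref{Q}) together with Lemma \ref{lem3}. The only (minor) difference is that the paper keeps $\zeta_t$ inside the conditional expectation, so its numerator bound is exactly $\OH(h_T^\beta)\,|\overline{m}_{T,1}(x)|$ and the division cancels identically, whereas your bound $p(X_t)\le 1$ makes the final step rest on Lemma \ref{mhat} and the tacit positivity of $p(x)$ (and $f(x)$), which is harmless under the paper's standing assumptions but slightly less economical.
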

\begin{proof} 
	Observe that
	\begin{eqnarray}\label{btilde}
	B_T(x,y) = \frac{\overline{m}_{T,2}(x,y) - m(x,y) \overline{m}_{T,1}(x)}{\overline{m}_{T,1}(x)} =: \frac{\widetilde{B}_T(x,y)}{\overline{m}_{T,1}(x)}.
	\end{eqnarray}

	Using  (A3)(i)-(ii) and a double conditioning with respect to the $\sigma$-field ${\cal S}_{t-\delta, \delta}^0$, and Lemma \ref{mhat}, we get 
	\begin{eqnarray*}\label{biastilde}
		\left| \widetilde{B}_T(x,y) \right|
		&=& 
		\frac{1}{n\E(Z_1(x))} \int_0^T \E\left(\zeta_t \Delta_t(x) 
		\left| 
		m(X_t,y) - m(x,y)  \right|\;   
		| \F_{t-\delta} \right) dt \\
		&\leq& 
		\sup_{u\in B(x,h_T)}|m(u,y) - m(x,y)| \times \frac{1}{n\E(Z_1(x))} \int_0^T \E\left(\zeta_t \Delta_t(x) | \F_{t-\delta}\right) dt \\
		&=& \OH(h_T^\beta) \times |\overline{m}_{T,1}(x)|
	\end{eqnarray*}
which implies that $B_T(x,y)= \OH(h_T^\beta)$. 
The statements  of (\ref{QT}) follow from (\ref{TermesBT-RT}) and Lemma \ref{lem3}.
\end{proof}


\bigskip
\noindent{\bf Proof of Theorem \ref{MSE}}.
	We have from (\ref{decomp}) and Lemma (\ref{mhat}) that
	\begin{eqnarray}\label{MSE1}
	\textsc{MSE}(x,y) &=& \mathbb{E}\left(\widehat{m}_T(x,y) - m(x)\right)^2  \simeq \mathbb{E}\left(B_T(x,y) + \frac{Q_T(x,y) + R_T(x,y)}{p(x)}\right)^2\nonumber\\
	&\simeq& \mathbb{E}\left(B^2_T(x,y)\right) + \frac{1}{p^2(x)}\left[\mathbb{E}\left(Q^2_T(x,y)\right)+ \mathbb{E}\left(R^2_T(x,y)\right)\right],
	\end{eqnarray}
	where the products $2\mathbb{E}\left[B_T(x,y) (Q_T(x,y)+R_T(x,y))\right]$ and  $2\mathbb{E}\left[Q_T(x,y)\times R_T(x,y)\right]$ have been ignored because by the
	Cauchz-schwarz inequality
	\begin{eqnarray*}
		\mathbb{E}\left[B_T(x,y) (Q_T(x)+R_T(x,y))\right]
		&\leq &\mathbb{E}\left(B^2_T(x,y)\right)^{1/2}\times
		\mathbb{E}\left((Q_T(x,y)+R_T(x,y)\right)^{1/2}
		\nonumber\\
		&\leq &\max \left\{ \mathbb{E}\left(B^2_T(x,y)\right), \
		\mathbb{E}\left[(Q_T(x,y)+R_T(x,y))^2\right]
		\right\}.
	\end{eqnarray*}
	We have the same inequality for the second product. The proof of  Theorem \ref{MSE} results from Theorem \ref{bias} and
	Lemma \ref{esp.Q} below, which 
gives an upper bound of the expectation of $Q_T^2(x,y)$ and  $R^2_T(x,y)$, respectively.
\begin{lemma}\label{esp.Q} Assume that assumptions (A1)-(A3) hold true, then
	\begin{eqnarray}
	\E(Q^2_T(x,y)) \simeq  \frac{4 p(x) (W_2(x,y) + (m(x,y))^2) M_2}{T\phi(h_T) M_1^2 f(x)},
	\end{eqnarray}
	
\end{lemma}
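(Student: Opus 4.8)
The plan is to reuse the martingale-difference structure already introduced for $Q_T$ in the asymptotic-normality section and turn $\E(Q_T^2)$ into a single sum of conditional second moments. Combining the two centred pieces $\widehat m_{T,2}-\overline m_{T,2}$ and $\widehat m_{T,1}-\overline m_{T,1}$ that define $Q_T$ in (\ref{Q}), I would write $Q_T(x,y)=\dfrac{1}{n\E(Z_1(x))}\sum_{i=1}^n\mathcal{Q}_{T,i}(x,y)$ with
$$\mathcal{Q}_{T,i}(x,y)=\int_{T_{i-1}}^{T_i}\Big[\zeta_t(\psi_y(Y_t)-m(x,y))\Delta_t(x)-\E\big(\zeta_t(\psi_y(Y_t)-m(x,y))\Delta_t(x)\,\big|\,\F_{t-\delta}\big)\Big]dt.$$
Exactly as in the proof of Lemma \ref{lem3}, each $\mathcal{Q}_{T,i}$ is $\F_{i-1}$-measurable with $\E(\mathcal{Q}_{T,i}\mid\F_{i-2})=0$, so the array is a martingale difference and the cross terms vanish: for $i<j$ one has $\F_{i-1}\subseteq\F_{j-2}$, whence $\E(\mathcal{Q}_{T,i}\mathcal{Q}_{T,j})=\E[\mathcal{Q}_{T,i}\,\E(\mathcal{Q}_{T,j}\mid\F_{j-2})]=0$. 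This yields the orthogonality reduction $\E(Q_T^2)=\dfrac{1}{(n\E(Z_1(x)))^2}\sum_{i=1}^n\E(\mathcal{Q}_{T,i}^2)$.

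Next I would control the two factors separately. For the denominator, Lemma \ref{lem1}(ii) gives $\E(Z_1(x))=\int_0^\delta\E(\Delta_t(x))\,dt=\delta\phi(h_T)(M_1 f(x)+o(1))$, hence $n\E(Z_1(x))\sim T\phi(h_T)M_1 f(x)$. For the numerator, I would drop the lower-order conditional-centring term and expand
$$\E(\mathcal{Q}_{T,i}^2)\approx\int_{T_{i-1}}^{T_i}\!\!\int_{T_{i-1}}^{T_i}\E\big[\zeta_t\zeta_s(\psi_y(Y_t)-m(x,y))(\psi_y(Y_s)-m(x,y))\Delta_t(x)\Delta_s(x)\big]\,dt\,ds,$$
conditioning successively on $\ST^0_{t,\delta}$ and $\ST^1_{t,\delta}$. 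Using the conditional independence of $\zeta$ and $Y$ given $X$, the Markov-type identities (A3)(i),(iii),(iv), the continuity of $p(\cdot)$ and $W_2(\cdot,y)$ near $x$, and crucially the Bernoulli identity $\zeta_t^2=\zeta_t$ (so $\E(\zeta_t\mid X_t)=p(X_t)$ contributes a single power of $p(x)$, not $p^2(x)$), the diagonal part of the integrand reduces to a term proportional to $\E[\Delta_t^2(x)\,p(X_t)\,(\cdots)]$, whose leading order is extracted from Lemma \ref{lem1} with $\ell=2$; this is exactly where the kernel moment $M_2$ and the factor $f(x)$ enter, through (A2)(iii) and (A2)(v).

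The step I expect to be the genuine obstacle is this continuous-time second-moment computation: justifying that the double integral over $[T_{i-1},T_i]^2$ is dominated by its near-diagonal part and pinning down the exact constant $4\,p(x)(W_2(x,y)+(m(x,y))^2)M_2$. Off the diagonal the joint small-ball factor makes the integrand of order $\phi^2(h_T)$, whereas the near-diagonal behaviour, governed by the conditional small-ball density $f_{t,s}(x)$ bounded by $b_{t,\alpha_0}(x)$ in (A2)$(i_0)$ together with the roughness of the $(\zeta,Y)$ paths, produces the dominant order-$\phi(h_T)$ contribution; this is the continuous-time mechanism behind the superoptimal rate of \cite{CM14} and \cite{ChaouchLaib2019}. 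The delicate points are controlling the remainder uniformly through the Riemann-sum approximation $T^{-1}\int_0^T b_{t,\alpha_0}(x)\,dt\to D_{\alpha_0}(x)$ of (A2)(v), and accounting precisely for how the contribution is retained between the $\widehat m_{T,2}$ and $\widehat m_{T,1}$ pieces so that the variance factor appears as $W_2(x,y)+(m(x,y))^2$. Once $\E(\mathcal{Q}_{T,i}^2)\sim 4\,p(x)(W_2(x,y)+(m(x,y))^2)M_2\,\delta\phi(h_T)f(x)$ is established, summing the $n$ identical-order contributions and dividing by $(n\E(Z_1(x)))^2\sim(T\phi(h_T)M_1 f(x))^2$ collapses to the announced $\dfrac{4\,p(x)(W_2(x,y)+(m(x,y))^2)M_2}{T\phi(h_T)M_1^2 f(x)}$; the companion term $R_T$, already $\OH_{a.s.}(h_T^\beta\sqrt{\log T/(T\phi(h_T))})$ by Lemma \ref{BR}, is of strictly smaller order and feeds directly into the MSE assembly.
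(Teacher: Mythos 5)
Your plan diverges from the paper's proof in a way that creates two genuine problems. The paper does \emph{not} combine the two centred pieces into the single array $\zeta_t(\psi_y(Y_t)-m(x,y))\Delta_t(x)$; it keeps them separate, writing $\E(Q_T^2)\simeq I_{T1}+m^2(x,y)I_{T2}$ with $I_{T1}=\E[\widehat m_{T,2}-\overline m_{T,2}]^2$ and $I_{T2}=\E[\widehat m_{T,1}-\overline m_{T,1}]^2$ (cross-products discarded by a Cauchy--Schwarz order argument), and then bounds each one via the martingale orthogonality you describe. This separation is precisely where the stated constant comes from: $I_{T1}$ contributes the $W_2(x,y)$ and $m^2(x,y)I_{T2}$ contributes the $(m(x,y))^2$, so the factor $W_2(x,y)+(m(x,y))^2$ is the \emph{sum of two separate bounds}. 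In your combined decomposition, conditioning on $\ST^0_{t,\delta}$, $\ST^1_{t,\delta}$ and using (A3) turns the integrand into $\E[\Delta_t^2(x)\,p(X_t)\,\E((\psi_y(Y_t)-m(x,y))^2\mid X_t)]$, and the inner factor converges to $\overline W_2(x,y)=W_2(x,y)-(m(x,y))^2$, not $W_2(x,y)+(m(x,y))^2$ — exactly the centred variance that appears in Lemma \ref{AsymNormalityQ}. The "delicate point" you defer (recovering $W_2+m^2$ from the combined array) is therefore not a technicality you can fix later: within your decomposition that factor structurally cannot arise. What your route proves, if completed, is a sharper bound proportional to $p(x)\overline W_2(x,y)M_2/(T\phi(h_T)M_1^2f(x))$; this is compatible with the lemma only if one reads the "$\simeq$" as an upper-bound order statement, but it is not a derivation of the displayed formula.

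The second problem is that you have made the hardest step of your own plan — exact evaluation of the double integral over $[T_{i-1},T_i]^2$ with negligibility of the off-diagonal via joint small-ball behaviour and the "superoptimal rate" mechanism of \cite{CM14} — and then left it unproven. Assumptions (A1)--(A3) control only the conditional small-ball probability $\PR(d(x,X_t)\leq u\mid\F_s)$ given the past, not the joint two-time distribution of $(X_s,X_t)$, so the claim that the off-diagonal part is of order $\phi^2(h_T)$ requires an argument (or an assumption) you do not supply. The paper avoids this entirely: it applies Jensen's inequality in time, $\bigl(\int_{T_{j-1}}^{T_j}g_t\,dt\bigr)^2\leq\delta\int_{T_{j-1}}^{T_j}g_t^2\,dt$, which collapses each block to its diagonal at the cost of a bounded constant, and then invokes Lemma \ref{lem1} with $\ell=2$ to extract $M_2f(x)\phi(h_T)$. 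Since only the order $1/(T\phi(h_T))$ of the bound matters for Theorem \ref{MSE}, the diagonal bound suffices and no two-time analysis is needed; the obstacle you identify as "genuine" is self-inflicted and should be replaced by this one-line Jensen step.
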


\begin{proof} Ignoring the product term as above, one may write
	$$\mathbb{E}\left[Q^2_T(x,y)\right] \simeq \mathbb{E}\left[\widehat{m}_{T,2}(x,y)- \overline{m}_{T,2}(x,y)\right]^2+
	m^2(x,y)\ \mathbb{E}\left[\widehat{m}_{T,1}(x)- \overline{m}_{T,1}(x)\right]^2:= I_{T1}+m^2(x,y)I_{T2}.$$
	The terms $I_{T1}$ and $I_{T2}$ can be handled  similarly. We will juste evaluate the first one. Since $(T_j=j\delta)_{0\geq j\geq n}$
	is a $\delta$-partition of $[0, T]$, we have
	\begin{eqnarray}\label{trem.L2}
	\widehat{m}_{T,2}(x,y)- \overline{m}_{T,2}(x,y) 
	&=&\frac{1}{n\E(Z_1(x))} \sum_{j=1}^n \int_{T_{j-1}}^{T_j} \left[\zeta_t (\psi_y(Y_t) \Delta_t(x) -
	\mathbb{E}\left\{\zeta_t \psi_y(Y_t) \Delta_t(x)| {\cal F}_{t-\delta}\right|\}\right]dt\nonumber\\
	&:=&\frac{1}{n\E(Z_1(x))} \sum_{j=1}^n  L_{T,j}(x,y).
	\end{eqnarray}
	  
	Since   $(L_{T,j}(x,y))_{j\geq 1}$ is a sequence of martingale differences   with respect to the family  $({\cal F}_{j-1})_{j\geq 1}$, then 
	%
	%
	$\mathbb{E}(L_{T,j}(xy,)L_{T,k}(x,y))=0 \quad \mbox{for every} \quad  j, k\in\{1, \ldots n \} \quad \mbox{ such that} \quad  j\neq k.$
	Therefore (by ignoring the product term), we have
	\begin{eqnarray}
	I_{T1}&=&\mathbb{E}\left[\widehat{m}_{T,2}(x,y)- \overline{m}_{T,2}(x,y)\right]^2
	\simeq  \frac{1}{n^2(\E(Z_1(x)))^2}\ \sum_{j=1}^n \mathbb{E}(L_{T,j}(x,y))^2.
	\end{eqnarray}
	Using  Jensen inequality    and conditioning  successively two times  with respect to  ${\cal S}_{t-\delta, \delta}^0$ and ${\cal S}_{t-\delta, \delta}^1$ combined wit (A3)(iii)-(iv), $I_{T1}$  may bounded  as
	\begin{eqnarray*}
		I_{T1} &\leq&\frac{4}{n^2(\E(Z_1(x)))^2}\ \sum_{j=1}^n \int_{T_{j-1}}^{T_j} \E\left[ \Delta_t^2(x) p(X_t) W_2(X_t,y) \right] dt.\\
		&=& \frac{4(p(x) + o(1))(W_2(x,y) + o(1))[M_2 f(x) + o(1)]}{T\phi(h_T) [M_1 f(x) + o(1)]^2}.
	\end{eqnarray*}
	On the other side, we can easily show that
	\begin{eqnarray*}
		I_{T2} \leq \frac{4(p(x)+o(1)) [M_2 f(x) + o(1)]}{T\phi(h_T) [M_1 f(x) + o(1)]^2}.
	\end{eqnarray*}
	
	Therefore,
	\begin{eqnarray*}
		\E(Q^2_T(x,y)) &\simeq& I_{T1} + m^2(x,y) I_{T2}
		= \frac{4 p(x) (W_2(x,y) + (m(x,y))^2) M_2}{T\phi(h_T) M_1^2 f^2(x)}
	\end{eqnarray*}
	Moreover, using the decomposition (\ref{R}) and Theorem \ref{bias}  and Lemma \ref{lem3}  one can see that
$\E(Q^2_T(x,y))$ is  negligible with respect to $\E(Q_T^2(x,y)).$ This completes the proof.
\end{proof}


\bigskip 
\noindent{\bf Proof of Theorem \ref{AsymNormality}}.  The proof of Theorem \ref{AsymNormality} is based essentially on Lemma \ref{AsymNormalityQ} established below,  which  gives the normality asymptotic of the principal term in the decomposition (\ref{decomp}). Indeed,  using decomposition (\ref{decomp}), one may write 
\begin{eqnarray}\label{decompbis}
\sqrt{T\phi(h_T)}\left( \widehat{m}_T(x,y) - m(x,y)\right) =  \sqrt{T\phi(h_T)}B_T(x,y) + \frac{\sqrt{T\phi(h_T)}Q_T(x,y) + \sqrt{T\phi(h_T)}R_T(x,y)}{\widehat{m}_{T,1}(x)}.
\end{eqnarray}
Lemma (\ref{mhat}) implies,  	under assumption (A1)-(A3), 
that 
$\widehat{m}_{T,1}(x) \longrightarrow p(x) \ a.s.$ as  $T\rightarrow\infty.$
Moreover, using Lemma (\ref{BR}), we get  
under (A3)(i)-(ii) combined  with conditions (\ref{ConTheoremNor}) that
$
\sqrt{T\phi(h_T)}	B_T(x,y) = \OH_{a.s.}(h_T^\beta\sqrt{T\phi(h_T)})=o_{a.s}(1),
$ and 
$$
\sqrt{T\phi(h_T)}	R_T(x,y) = \OH_{a.s.}\left(\sqrt{T\phi(h_T)}h_T^\beta \left(\frac{\log T}{T\phi(h_T)}\right)^{1/2} \right)=0
\left(h_T^\beta \log T^{1/2}\right)=
o_{a.s.}(1)
$$
The proof may be  then  achieved by  Lemma \ref{AsymNormalityQ}  and
 Slutsky's  Theorem. 
$\hfil \Box$



%
\begin{lemma}\label{AsymNormalityQ}
	Under conditions (A1)-(A3), we have 
	$$\sqrt{T\phi(h_T)}(\widehat{m}_T(x,y)-m(x,y)) \quad \xrightarrow{d}\quad {\cal N}(0, \tilde{\sigma}^2(x,y)) \quad \text{where} \quad $$
$$\tilde{\sigma}^2(x,y) \leq \frac{M_2}{M_1^2 f(x)}p(x)\overline{W}_2(x,y) \quad 
\text{as} \quad T\longrightarrow \infty.$$
\end{lemma}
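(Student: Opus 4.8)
The plan is to read off the result from the decomposition (\ref{decompbis}) by isolating $\sqrt{T\phi(h_T)}\,Q_T(x,y)$ as the single term that survives in the limit. As already recorded in the proof of Theorem \ref{AsymNormality}, conditions (\ref{ConTheoremNor}) together with Lemma \ref{BR} force $\sqrt{T\phi(h_T)}\,B_T(x,y)=o_{a.s.}(1)$ and $\sqrt{T\phi(h_T)}\,R_T(x,y)=o_{a.s.}(1)$, while Lemma \ref{mhat} gives $\widehat{m}_{T,1}(x)\to p(x)$ a.s. Hence, by Slutsky's theorem, it is enough to establish a central limit theorem for the principal term $S_T(x,y):=\sqrt{T\phi(h_T)}\,Q_T(x,y)$ and then divide by the a.s.\ limit $p(x)$. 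Collecting the two martingale expansions of $\widehat{m}_{T,2}-\overline{m}_{T,2}$ and $\widehat{m}_{T,1}-\overline{m}_{T,1}$ produced in Lemma \ref{lem3}, I would write $S_T(x,y)=\sum_{i=1}^n \xi_{T,i}(x,y)$ with centred summands $\xi_{T,i}=\eta_{T,i}-\E[\eta_{T,i}\mid{\cal F}_{t-\delta}]$ and $\eta_{T,i}$ as in (\ref{NormalityProofLema1}). Exactly as in the proof of Lemma \ref{lem3}, the nesting ${\cal F}_{i-2}\subset{\cal F}_{t-\delta}\subset{\cal F}_{i-1}$ shows that, for each fixed $n$, $(\xi_{T,i},{\cal F}_{i-1})_{1\le i\le n}$ is a centred martingale difference array.

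I would then invoke a central limit theorem for martingale difference arrays of Hall--Heyde type, which requires two conditions. First, the conditional variance must converge, $\sum_{i=1}^n\E[\xi_{T,i}^2\mid{\cal F}_{i-1}]\xrightarrow{\PR} v(x,y)$ for a deterministic limit $v(x,y)$. To identify $v$ I would expand $\eta_{T,i}^2$ and condition successively on ${\cal S}^0_{t,\delta}$ and ${\cal S}^1_{t,\delta}$: assumption (A3)(iv) turns the $\zeta$-factor into $p(X_t)$, (A3)(i) and (A3)(iii) turn $(\psi_y(Y_t)-m(x,y))^2$ into $\overline{W}_2(X_t,y)+o(1)$, and Lemma \ref{lem1}$(i)$ supplies $\E[\Delta_t^2(x)\mid{\cal F}_{i-2}]=\phi(h_T)\big(M_2 f_{t,T_{i-2}}(x)+o_{a.s.}(1)\big)$. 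Normalising by $(n\E(Z_1(x)))^2=\big(T\phi(h_T)M_1 f(x)\big)^2(1+o(1))$, letting the ergodic average (A2)(iii) send $n^{-1}\sum_j f_{j\delta,(j-1)\delta}(x)$ to $f(x)$, and using the continuity of $p$, $\overline{W}_2(\cdot,y)$ and $f$ near $x$, the sum collapses to a deterministic limit which, after the Slutsky division by $\widehat{m}_{T,1}(x)\to p(x)$, yields the asymptotic variance bounded above by $\tilde\sigma^2(x,y)$ as stated in the lemma.

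Second, I would verify the conditional Lindeberg condition through a Lyapunov bound. Since $\xi_{T,i}$ is, up to the fixed factor $1/(\sqrt{T\phi(h_T)}\,M_1 f(x)(1+o(1)))$, the martingale increment $\mathcal{L}_{T,i}$ of Lemma \ref{lem3}, the moment estimate (\ref{Moemnts}) gives, for some $\kappa>2$, $\sum_{i=1}^n\E[|\xi_{T,i}|^{\kappa}\mid{\cal F}_{i-1}]=\OH_{a.s.}\big((T\phi(h_T))^{1-\kappa/2}\big)\to0$, because $T\phi(h_T)\to\infty$ by (\ref{ConTheoremNor}); this dominates the Lindeberg sum and forces the negligibility of the large increments. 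Granting both conditions, the martingale CLT gives $S_T(x,y)\xrightarrow{d}{\cal N}(0,p^2(x)\tilde\sigma^2(x,y))$, and dividing by $\widehat{m}_{T,1}(x)\to p(x)$ while reinserting the negligible bias and $R_T$ contributions of (\ref{decompbis}) yields the announced convergence.

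The main obstacle is the conditional variance convergence. Two delicate points arise: squaring the cell integral $\int_{T_{i-1}}^{T_i}\zeta_t\Delta_t(x)[\psi_y(Y_t)-m(x,y)]\,dt$ produces off-diagonal cross terms over $[T_{i-1},T_i]^2$ whose contribution must be controlled against the dominant $\Delta_t^2$ part (this is precisely where the smallness of $\delta$ and the local smoothness conditions (A3) enter), and the replacement of the random Riemann average $n^{-1}\sum_j f_{j\delta,(j-1)\delta}(x)$ by its ergodic limit $f(x)$ must be justified for the triangular array, invoking the stationarity and ergodicity of $(f_{j\delta,(j-1)\delta}(x))_{j\ge1}$ noted after (A2)(iii) together with the domination (A2)(v). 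By contrast, the Lindeberg step is routine once (\ref{Moemnts}) is in hand.
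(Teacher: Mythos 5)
Your outline follows essentially the same route as the paper: the same martingale-difference decomposition $\sqrt{T\phi(h_T)}Q_T(x,y)=\sum_{i=1}^n\xi_{T,i}(x,y)$ built from $\eta_{T,i}$ in (\ref{NormalityProofLema1}), the same Hall--Heyde Corollary~3.1 with its two conditions, and the same Lyapunov-type use of the moment bound (\ref{Moemnts}) for the Lindeberg part (the paper does this unconditionally via H\"older and Markov, you do it via a conditional Lyapunov sum; both are equivalent in substance). Two points, however, need fixing. First, an indexing slip: since $\xi_{T,i}$ is ${\cal F}_{i-1}$-measurable, the conditional variance condition must be $\sum_{i=1}^n\E[\xi_{T,i}^2\mid{\cal F}_{i-2}]\xrightarrow{\PR}\tilde\sigma^2(x,y)$, as the paper writes it; conditioning on ${\cal F}_{i-1}$ as you state would make the sum equal to $\sum_i\xi_{T,i}^2$ and the condition meaningless. (You also need the small preliminary step $\sum_i\bigl(\E[\eta_{T,i}\mid{\cal F}_{i-2}]\bigr)^2=O(h^{2\beta}\phi(h))=o(1)$ to pass from $\xi$ to $\eta$, which the paper carries out using (A3)(ii) and (A2).)

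Second, the issue you call ``the main obstacle'' --- the off-diagonal cross terms produced by squaring the cell integral --- is not handled in the paper by showing those terms are negligible against a dominant diagonal part, and your suggested mechanism (smallness of $\delta$ plus (A3)) would not deliver that. The paper simply applies the Cauchy--Schwarz inequality on each cell, $\bigl(\int_{T_{i-1}}^{T_i}g_t\,dt\bigr)^2\le\delta\int_{T_{i-1}}^{T_i}g_t^2\,dt$, which converts the square of the integral into an integral of squares at the price of yielding only an \emph{upper bound} on the conditional variance sum ($A_n\le\tilde\sigma^2(x,y)+o(1)$, plus $C_n=o_{a.s.}(1)$). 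This one-line device is exactly why the lemma asserts the inequality $\tilde\sigma^2(x,y)\le\frac{M_2}{M_1^2f(x)}p(x)\overline{W}_2(x,y)$ rather than identifying the variance; your plan, which promises convergence of the conditional variances to a deterministic limit that is then bounded by $\tilde\sigma^2$, leaves open precisely the step that would require joint small-ball information on $(X_s,X_t)$ for nearby $s,t$, which neither the paper's assumptions nor yours provide. With the Cauchy--Schwarz substitution in place of your unresolved cross-term control, your argument coincides with the paper's proof.
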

\begin{proof} We have 
\begin{eqnarray}\label{NormalityProofLema1}
\sqrt{T\phi(h_T)}Q_T(x,y)&=&\sum_{i=1}^n \xi_{T,i}(x,y),  \quad \text{with}\quad 
\xi_{T,i}(x,y)=\eta_{T,i}(x,y)-\E\left[\eta_{T,i}(x,y) \ \Big| {\cal F}_{t-\delta} \right] \text{and}  \nonumber\\
\eta_{T,i}(x,y) &=& \frac1{ \E Z_1}\sqrt{\frac{\phi_T(h)}{n}}
\int_{T_{i-1}}^{T_i}\zeta_t\Delta_t(x) \left[\psi_y(Y_t)-m(x,y)\right] dt
\end{eqnarray}

Now observe that for any $i\geq 1$   and $t\in [T_{i-1}, T_i]$
${\cal F}_{i-2}\subset {\cal F}_{t-\delta}\subset {\cal F}_{i-1}$. Therefore $(\xi_{T,i}(x,y))_{i\geq 1}$ is ${\cal F}_{i-1}$-measurable,  and 
$\E(|\xi_{T,i}|)<\infty$ provided $\E(\zeta_t^2)<\infty$ and $\E(X_t^2)<\infty$. Moreover, we have for any $1\leq i\leq n$ that 
$\E\left( \xi_{T,i} \ \Big |  {\cal F}_{i-2}\right)=
\E\left\{\E\left[\eta_i \ | {\cal F}_{t-\delta}  \right]  
\Big |  {\cal F}_{i-2} \right\}-
\E\left\{\E\left[\eta_i \ | {\cal F}_{t-\delta}  \right]  
\Big |  {\cal F}_{i-2} \right\}=0.	$

Hence $(\xi_{T,i}(x,y))_{i \geq 1}$  is a sequence of martingale differences with respect to the $\sigma$-fields 
$( {\cal F}_{i-1})_{i\geq 1}$. 
To prove the asymptotic normality, it suffices to check the
two following  conditions (see, Corollary 3.1, p. 56, \cite{HaHe(1980)}):
 
(a) $\sum_{i=1}^n \mathbb{E}[\xi^2_{T,i}( x,y) | \mathcal{F}_{
		i-2}] \overset{\mathbb{P}}{\rightarrow} \tilde{\sigma}^2( x,y)$ 
	
	and (b)
	 $n \mathbb{E}[\xi_{T,i}^2( x,y) \mathbf{I}_{\{|\xi_{T,i}(x,y)| > \epsilon \}}] =
	o(1)$ holds, for any $\epsilon >0$,
	
%

\vskip 1mm
\noindent{\bf Proof of (a)}. Observe now that
$$\Big | \sum_{i=1}^n \E \left[\eta^2_{T,i}(x,y) \ \Big | {\cal F}_{i-2} \right]- 
\sum_{i=1}^n \E \left[\xi^2_{T,i}(x,y) \ \Big | {\cal F}_{i-2} \right]\Big| \leq 
\Big| \sum_{i=1}^n \left(\E \left[\eta_{T,i}(x,y) \ \Big | {\cal F}_{i-2} \right]\right)^2\Big |
$$
Using (A1),  (A3)(i)-(iv) with  Lemma \ref{lem1}, and conditioning two times with respect  to the $\sigma$-field ${\cal S}^{1}_{t-\delta, \delta}$  and 
the fact that $n\mathbb{E} Z_1(t)=O(T\phi(h))$, we have
\begin{eqnarray*}\label{NormalityProofLema1}
&&	\Big| \E\left(\eta_{T,i} \Big| {\cal F}_{i-2}\right) \Big| \nonumber\\
	%
	%
	&=& \frac1{ \E Z_1}\sqrt{\frac{\phi_T(h)}{n}} \Big |
	\int_{T_{i-1}}^{T_i} \E\left(p(X_t)\Delta_t(x) \left[m(X_t,y)-m(x,y)\right] dt \Big | {\cal F}_{i-2}\right)  \Big |  \nonumber\\
	&\leq & \sqrt{n\phi_T(h)}\frac{p(x)|}{ n\E Z_1}
	\sup_{u\in {\cal B}(x, h)}|m(u,y)-m(x,y)| 
	\sup_{u\in {\cal B}(x, h)}|p(u)-p(x)|
	\Big | 
	\int_{T_{i-1}}^{T_i} \E\left(\Delta_t(x)  dt \Big | {\cal F}_{i-2}\right)  \Big |  \nonumber\\
	&=& O\left(\sqrt{n\phi_T(h)} \ h^\beta\right)O\left(\phi(h_T)\int_{T_{i-1}}^{T_i} f_{t, T_{i-2}}(x)dt +o(1)\right)\frac1{ n\E Z_1} \nonumber\\
		&=& O\left(\sqrt{n\phi_T(h)} \ h^\beta\right)O\left(\frac1T \int_{T_{i-1}}^{T_i} b_{t, \alpha_0}(x)dt)\right). \nonumber\\
\end{eqnarray*}
It follows by (A2)-(iii)  and the Cauchez-Schwarz inequality  that 
%

$$ \sum_{i=1}^n \left(\E \left[\eta_{T,i}(x,y) \ \Big | {\cal F}_{i-2} \right]\right)^2=O(h^{2\beta}\phi(h))=o(1).$$
Thus we have only to show that: 
$\sum_{i=1}^n \mathbb{E}[\eta^2_{T,i}( x,y) | \mathcal{F}_{
	i-2}] \overset{\mathbb{P}}{\rightarrow} \sigma^2( x,y)$.
Using again the Cauchez-Schwarz inequality, one may write
\begin{eqnarray}\label{Eval.eta2}
\sum_{i=1}^n \mathbb{E}[\eta^2_{T,i}( x,y) | \mathcal{F}_{i-2}]  &=& \frac1{ (\E Z_1)^2}\frac{\phi_T(h)}{n}%
\sum_{i=1}^n\E\left[  \left( \int_{T_{i-1}}^{T_i}   \zeta_t \Delta_t(x)[\psi_y(Y_t)-m(x,y)]       dt   \right)^2 \Big|  {\cal F}_{i-2}          \right] \nonumber\\
&\leq& \frac{\delta}{ (\E Z_1)^2}\frac{\phi_T(h)}{n}%
\sum_{i=1}^n\E\left[  \int_{T_{i-1}}^{T_i}   \zeta_t^2 \Delta^2_t(x)[\psi_y(Y_t)-m(x,y)]^2       dt   \Big|  {\cal F}_{i-2}          \right] \nonumber\\
&=& 
\frac{\delta}{ (\E Z_1)^2}\frac{\phi_T(h)}{n}%
\sum_{i=1}^n\E\left[  \int_{T_{i-1}}^{T_i}   \zeta_t^2 \Delta^2_t(x)[\psi_y(Y_t)-m(X_t,y)]^2       dt   \Big|  {\cal F}_{i-2}          \right] \nonumber\\
&+&
\frac{\delta}{ (\E Z_1)^2}\frac{\phi_T(h)}{n}%
\sum_{i=1}^n\E\left[  \int_{T_{i-1}}^{T_i}   \zeta_t \Delta^2_t(x)[m(X_t,y)-m(x,y)]^2       dt   \Big|  {\cal F}_{i-2}          \right] \nonumber\\
&=:&A_n+C_n
\end{eqnarray}

Conditioning three  times   with respects to ${\cal F}_{t-\delta}$ and ${\cal S}^\ell_{t-\delta}$, and using Conditions (A3)(ii-(iv)-(i$v^\prime$) and  the fact that $T=n\delta$, to get from   Lemma \ref{lem1}  that 
\begin{eqnarray}\label{An}
A_n&=& 
\frac{\delta}{ (\E Z_1)^2}\frac{\phi_T(h)}{n}%
\sum_{i=1}^n\E\left[  \int_{T_{i-1}}^{T_i}   p(X_t) \Delta^2_t(x) \overline{W}_2(X_t,y)       dt   \Big|  {\cal F}_{i-2}          \right] \nonumber\\
&\leq & 
\frac{\delta}{ (\E Z_1)^2}\frac{\phi_T(h)}{n}%
(p(x)+o(1))| (\overline{W}_2(x,y)+o(1))
\sum_{i=1}^n
\E\left[  \int_{T_{i-1}}^{T_i}    \Delta^2_t(x)       dt   \Big|  {\cal F}_{i-2}          \right] \nonumber\\
&\leq & 
(\delta+o(1)) p(x)\overline{W}_2(x,y)\frac{\phi^2_T(h)}{(\E Z_1)^2}%
\ \frac1n \sum_{i=1}^n
\int_{T_{i-1}}^{T_i}  \E\left[\frac1{\phi_T(h)}\Delta^2_t(x)          \Big|  {\cal F}_{i-2}          \right]dt \nonumber\\
&\leq & 
\delta(\delta+o(1)) p(x)\overline{W}_2(x)\frac{\phi^2_T(h)}{(\E(Z_1)^2}M_2
\left\{\frac1T \sum_{i=1}^n\int_{T_{i-1}}^{T_i}
f_{t, T_{i-2}}(x) dt+ 
O_{a.s.}\left[ \frac1T  \sum_{i=1}^n  \int_{T_{i-1}}^{T_i}   
\frac{ g_{t, T_{i-2},x}(h_T)}{\phi_T(h)} dt
\right] \right\}\nonumber\\ 
%
\end{eqnarray}

\vskip -6mm 
Using the Riemann's sum combined with condition (A2)(iii), it follows that 
$$\frac1T \sum_{i=1}^n\int_{T_{i-1}}^{T_i}
f_{t, T_{i-2}}(x) dt \leq \frac1T\int_0^T f_{t, T_{t-\delta}}(x)dt  \longrightarrow f(x) \quad a.s. \quad as \  T\longrightarrow \infty.
$$
and by (A2)(ii), which states that 
$\frac{ g_{t, T_{i-2},x}(h_T)}{\phi_T(h)}=o(1) \quad \text{as} \quad  T\longrightarrow \infty.$
Thus, 
\begin{eqnarray}\label{An.1}
A_n &\leq&  \delta(\delta+o(1)) p(x)\overline{W}_2(x,y)\frac{\phi^2_T(h)}{(\delta\phi_T(h_T) M_1 f(x)+o(1))^2}M_2 
\left[  f(x)+o(1) \right] \nonumber\\
&=& \frac{M_2}{M_1^2 f(x)}p(x)\overline{W}_2(x,y):=\tilde{\sigma}^2(x,y) \quad 
\text{as} \quad T\longrightarrow \infty.
\end{eqnarray}
Making use of the same arguments as above combined wi the fact that 
$\sup_{u\in {\cal B}(x, h)}|m(x)-m(u)|=h^{2\beta}$, we get $C_n=o_{a.s.}(1).$
%

\vskip 1mm

 \noindent {\bf Proof of part (b)}. Using successively H\"older, Markov,  Jensen and Minkowski inequalities combined with conditions  (A3)(iii), (A3)(i$v^\prime$) and Lemma  (\ref{lem1}), we get 
 , for any $\epsilon > 0$, any $p$ and $q$ such
 that $1 / p + 1 / q = 1$, that 
%
%
\begin{eqnarray*}
n \mathbb{E}[\xi_{T,i}^2( x,y) \mathbf{I}_{\{|\xi_{T,i}(x,y)| > \epsilon \}}] 
&\leq& 4n (\epsilon/2)^{-2q/p}\mathbb{E}[|\eta_{T,i}|^{2q}]=O\left(T\phi(h_T)^{-\gamma/2}\right)=o_{a.s}(1)
\end{eqnarray*}
by taking $2q=2+\gamma$ ($0<\gamma<1$), 
since $T\phi(h_T)$ towards to infinity as $T$ goes to infinity.

\end{proof}
  
 \bigskip  
\noindent  {\bf Proof of Corollary} \ref{colorlarryNor}.
We have 
\begin{eqnarray}\label{corllary-intr}
\sqrt{\frac{T F_{x,T}(h_T)}{\widetilde{V}^2_T(x,y)}}\left(\widehat{m}_T(x,y)-m(x,y) \right)= \sqrt{\frac{F_{x,T}(h_T)}{\phi(h_T) f(x)}} \sqrt{\frac{\sigma^2(x,y)f(x)}{\widetilde{V}^2_T(x,y)}} 
\sqrt{\frac{T\phi(h_T)}{ \sigma^2(x,y) }}\left(\widehat{m}_T(x,y)-m(x,y) \right)
\end{eqnarray}
We have from the consistency of  $F_{x,T}(h_T)$ and A2(i) that
$\frac{F_{x,T}(h_T)}{\phi(h_T) f(x)}$ goes to $1$ a.s.  as $T$ goes $+\infty$. By 
 Theorem \ref{AsymNormality} that quantity 
$\sqrt{\frac{T\phi(h_T)}{\sigma(x,y)}}
\left(\widehat{m}_T(x,y)-m(x,y) \right)
$ converges to ${\cal N}(0, 1)$ as $T\rightarrow \infty$. 
Using then the non-decreasing property of the cumulative  standard Gaussian distribution function $\Psi$, we get, 
for a given risk $0<\alpha<1$, the $(1-\alpha)-$ pseudo-confidence bands: 
\begin{eqnarray}\label{pseudo-intr0}
\sqrt{\frac{T\phi(h_T)}{\sigma^2(x,y)}}
\left|\widehat{m}_T(x,y)-m(x,y) \right| \leq \Psi^{-1}(1-\frac\alpha{2}).
\end{eqnarray} 
Considering now the statement (\ref{sigma1}) combined with 
Proposition \ref{PropoV(x)}, it  holds that 
\begin{eqnarray}\label{pseudo-intr}
\lim_{T\rightarrow \infty}\frac{\sigma^2(x,y)f(x)}{\widetilde{V}^2_n(x,y)}
\leq \lim_{T\rightarrow \infty} \frac{f(x)V^2(x,y)}{\widetilde{V}^2_n(x,y)}=
 \lim_{T\rightarrow \infty} \frac{\widetilde{V}^2(x,y)}{\widetilde{V}^2_n(x,y)}=
1 \quad a.s.,
\end{eqnarray}
because  $\widetilde{V}_n^2(x,y)$ is a consistent estimator of $\widetilde{V}^2(x,y)$. The proofs follows then from the statements
(\ref{corllary-intr}), (\ref{pseudo-intr0}) and (\ref{pseudo-intr})
 $\hfill \Box$



\bigskip 
\noindent {\bf  Proof of Theorem \ref{uniform} }

Letting 
$\lambda_T=\sqrt{\frac{\varphi_{\cal C}(\epsilon_T)}{T\phi(h_T)} }$
with $\epsilon_T= \frac{\log T}{T}$ and consider 
  $n\delta=T$, $n=[T] \  (T\geq 1)$, consequently $1\leq \delta<2$. Observe that 
\begin{eqnarray}\label{uniform.eq1}
&\sup_{y\in S} \sup_{x\in \C}|\widehat{m}_T(x,y) - m(x,y)| \leq \nonumber\\ 
& \sup_y\sup_{x\in \C}|B_T(x,y)| +
\frac{\sup_{y\in S}\sup_{x\in \C}|R_T(x,y)| + \sup_{y\in S}\sup_{x\in \C}|Q_T(x,y)|}{\inf_{x\in \C}|\widehat{m}_{T,1}(x)|}.
\end{eqnarray}
Since
$\inf_{x\in \C}\big| \widehat{m}_{T,1}(x) \big| > \inf_{x\in \C}\big| p(x) \big| - \sup_{x\in \C}\big| \widehat{m}_{T,1}(x) - p(x) \big|,$
using the same steps of the proof of Lemma \ref{mhat}, one can show,
under conditions (A1), (U0), (A3)(iv)-(i$v^\prime$), (A3)(i)-(ii), the second term  in the above inequality  equals zero. Thus, making use (U0)(v), we get for sufficiently large $T$ that      $\inf_{x\in \C}\Big| \widehat{m}_{T,1}(x) \Big|>b_1$ a.s. 

Concerning  the conditional bias, inspection of  the proof of the statement (\ref{biastilde}) 
shows that the term  $\tilde{B}_T(x,y)$ is bounded above by a constant which  is independent of $x$ and $y$, therefore,  we have under  (A3)(i)-(ii) that
\begin{eqnarray}\label{BiasCond} 
\sup_y	\sup_{x\in \C}\big|B_T(x,y)\big|  
%
\leq \frac{\sup_y \sup_{x\in \C}\big|\widetilde{B}_{T}(x,y)\big|}{\inf_{x\in \C}\big|\overline{m}_{T,1}(x)\big|} \leq \frac1{b_1}\sup_y \sup_{x\in \C}\big|\widetilde{B}_{T}(x,y)\big|=\OH_{a.s.}(h_T^\beta).
\end{eqnarray}
Moreover,  under the assumption of Lemma \ref{U1}, making use of 
the decomposition (\ref{Q}), it follows from  
(U0)(iv) that $\sup_{y\in S}\sup_{x\in \C}|Q_T(x,y)| = \OH_{a.s.}(\lambda_T)$. In the other hand   conditions (\ref{R}) and 
(\ref{BiasCond}) allow to conclude   $\sup_{y\in S}\sup_{x\in \C}|R_T(x,y)| = \OH_{a.s.}\left(\lambda_T h_T^\beta\right)$.
The mean task is  to prove the following Lemma that allows with the statement  (\ref{decomp}) to achieve the proof of the Theorem \ref{uniform}  $\hfill$ 

\begin{lemma}\label{U1} Assume that  (A1), (U0)(i)-(iii), (A3)(i), (A3)(iii-iv), (A3)(i$v^\prime$), (U1)-(U4)  together with conditions  (\ref{cond})  and    (\ref{CEntropy}) are satisfied. Then  we have
	$$\sup_{y\in S}\sup_{x\in \C}|\widehat{m}_{T,2}(x,y) - \overline{m}_{T,2}(x,y)| = \OH_{a.s.}(\lambda_T).$$
\end{lemma}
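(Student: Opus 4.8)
The plan is to establish this as the entropy-based analogue of the pointwise Lemma \ref{lem3}, by combining a covering argument over the compact sets $\C$ and $S$ with the exponential inequality for martingale differences (Lemma \ref{lem2}) and the Borel--Cantelli lemma. First I would reduce the supremum to a finite maximum plus oscillation terms. For $\epsilon_T=\log T/T$, cover $\C$ by $N_{\epsilon_T}=\exp\{\varphi_{\cal C}(\epsilon_T)\}$ balls of radius $\epsilon_T$ centred at $c_1,\dots,c_{N_{\epsilon_T}}$, and cover $S$ by a net $y_1,\dots,y_{\ell_n}$ whose cardinality $\ell_n$ grows polynomially in $n$. Writing $c_{k(x)}$ and $y_{\ell(y)}$ for the net points closest to $x$ and $y$, the decomposition
\begin{eqnarray*}
\sup_{y\in S}\sup_{x\in\C}\big|\widehat{m}_{T,2}(x,y)-\overline{m}_{T,2}(x,y)\big|
&\leq& \sup_{y,x}\big|\widehat{m}_{T,2}(x,y)-\widehat{m}_{T,2}(c_{k(x)},y_{\ell(y)})\big|\\
&&+\ \max_{k,\ell}\big|\widehat{m}_{T,2}(c_k,y_\ell)-\overline{m}_{T,2}(c_k,y_\ell)\big|\\
&&+\ \sup_{y,x}\big|\overline{m}_{T,2}(c_{k(x)},y_{\ell(y)})-\overline{m}_{T,2}(x,y)\big|
\end{eqnarray*}
splits the problem into a finite-net deviation term and two deterministic oscillation terms, each of which further factors into an $x$-part (fixing $y$) and a $y$-part (fixing $x$).

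For the finite-net term, recall from the proof of Lemma \ref{lem3} that, for each fixed centre $c_k$ and net point $y_\ell$, $\widehat{m}_{T,2}(c_k,y_\ell)-\overline{m}_{T,2}(c_k,y_\ell)=(n\E(Z_1(c_k)))^{-1}\sum_{j=1}^n \mathcal{L}_{T,j}(c_k,y_\ell)$ is a normalised sum of martingale differences obeying the conditional-moment bound (\ref{Moemnts}), so the associated variance proxy is $D_n=\OH(T\phi(h_T))$, uniformly in $c_k$ by (U0). I would then apply Lemma \ref{lem2} with threshold $\epsilon=\epsilon_0\,\lambda_T\,n\E(Z_1(c_k))$; since $\lambda_T\to 0$ forces $C\epsilon=o(D_n)$, the inequality yields
$$\PR\Big(\big|\widehat{m}_{T,2}(c_k,y_\ell)-\overline{m}_{T,2}(c_k,y_\ell)\big|>\epsilon_0\lambda_T\Big)\leq 2\exp\{-c\,\epsilon_0^2\,\varphi_{\cal C}(\epsilon_T)\}.$$
A union bound over the $N_{\epsilon_T}\,\ell_n=\exp\{\varphi_{\cal C}(\epsilon_T)\}\,\ell_n$ net points produces a tail of order $\ell_n\exp\{-(c\epsilon_0^2-1)\varphi_{\cal C}(\epsilon_T)\}$; choosing $\epsilon_0$ large enough and taking $\ell_n=\OH(n^\gamma)$ as in the entropy condition (\ref{CEntropy}) makes these tails summable along $T_n=[T]$, so Borel--Cantelli gives $\max_{k,\ell}|\cdots|=\OH_{a.s.}(\lambda_T)$.

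The two oscillation terms are handled deterministically. For the oscillation in $x$, the H\"older continuity (U1)(i) together with $|d(x,X_t)-d(c_{k(x)},X_t)|\leq d(x,c_{k(x)})\leq\epsilon_T$ bounds the kernel increment by $a_K\epsilon_T/h_T$ on the enlarged support $\{d(x,X_t)\leq h_T+\epsilon_T\}$; combined with (U2), the uniform normalisation $n\E(Z_1(x))\asymp T\phi(h_T)$ coming from Lemma \ref{lem1}(ii) and (U0)(iv), and $\E\int_0^T\1_{\{d(x,X_t)\leq h_T+\epsilon_T\}}dt=\OH(T\phi(h_T))$, this term is $\OH_{a.s.}(\epsilon_T/h_T)$, which is $o(\lambda_T)$ because $\phi(h_T)=o(h_T)$ and (U4) give $(\epsilon_T/h_T)/\lambda_T\to 0$. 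For the oscillation in $y$, condition (U3) controls the modulus by $c_\psi u_n^\gamma$ over each $S$-interval of radius $u_n$, so a polynomially small $u_n$ renders this term $O(\lambda_T)$ while keeping $\ell_n=\OH(n^\gamma)$. The same estimates transfer verbatim to $\overline{m}_{T,2}$ through its conditional-expectation representation (\ref{mbarTi-1}).

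The main obstacle is the oscillation in the functional argument $x$: unlike the finite-dimensional case, the kernel increment $a_K\epsilon_T/h_T$ must be weighed against the small-ball probability $\phi(h_T)$, and showing $\epsilon_T/h_T=o(\lambda_T)$ is exactly where the calibration in (U4) (forcing $\varphi_{\cal C}(\epsilon_T)>(\log T)^2/(T\phi(h_T))$) together with $\phi(h_T)=o(h_T)$ becomes indispensable. A secondary care point is to verify that both the variance proxy $D_n$ and the normalisation $n\E(Z_1(x))$ admit bounds that are uniform over $\C$, which is guaranteed precisely by the uniform ergodic hypotheses (U0)(i)--(iv).
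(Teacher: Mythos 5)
Your proposal is correct and follows essentially the same route as the paper's proof: a covering of $\C$ by an $\epsilon_T$-net and of $S$ by a polynomial net, deterministic bounds of order $\OH_{a.s.}(\epsilon_T/h_T)$ and $\OH_{a.s.}(u_n^\gamma)$ for the $x$- and $y$-oscillation terms via (U1)--(U3) and ergodicity, and the martingale exponential inequality of Lemma \ref{lem2} applied at threshold $\epsilon_0\lambda_T$ on the finite net, followed by a union bound, the entropy condition (\ref{CEntropy}) and Borel--Cantelli. The only differences are cosmetic (you do a single joint decomposition where the paper nests the $y$-net inside the term $\mathcal{H}_{T,2}$), and your explicit restriction of the kernel increment to the enlarged support $\{d(x,X_t)\leq h_T+\epsilon_T\}$ actually makes the cancellation of the $\phi(h_T)$ factors, which the paper leaves implicit, cleaner.
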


\noindent	{\bf Proof of Lemma \ref{U1}  }.

Let  $\epsilon>0$ be given and  	consider a covering of the class  of functions ${\cal C}$ by closed  balls  

$B(c_k, \epsilon) = \{x\in \C: d(x, c_k) < \epsilon \},  \ 1\leq k\leq =\N(\epsilon, \C, d):=N_\epsilon,$
that is 	${\cal C} \subset  \cup_{k=1}^{N_\epsilon} B(c_k, \epsilon)$. Then we have 
\begin{eqnarray}\label{DD}
\sup_{y\in S}\sup_{x\in \C}|\widehat{m}_{T,2}(x,y) - \overline{m}_{T,2}(x,y)| &\leq &
%
%
\sup_{y\in S}	\max_{1\leq k \leq N_\epsilon }\sup_{x\in B(c_k, \epsilon)}|\widehat{m}_{T,2}(x,y) - \widehat{m}_{T,2}(c_k, y)| \nonumber \\
& +&	\sup_{y\in S} \max_{1\leq k \leq N_\epsilon}|\widehat{m}_{T,2}(c_k,y) - \overline{m}_{T,2}(c_k,y)| \nonumber\\
&& + 	\sup_{y\in S} \max_{1\leq k \leq N_\epsilon}\sup_{x\in B(c_k, \epsilon)}|\overline{m}_{T,2}(x,y) - \overline{m}_{T,2}(c_k,y)| \nonumber\\
&=:& \mathcal{H}_{T,1} + \mathcal{H}_{T,2} + \mathcal{H}_{T,3}.
\end{eqnarray}
Let us now focus on  the  first term $\mathcal{H}_{T,1}$.  We have  for any $x \in B(c_k, \epsilon)$ and $y\in S$ 
\begin{eqnarray}\label{HT1}
\widehat{m}_{T,2}(x,y) - \widehat{m}_{T,2}(c_k,y) &=& \frac{1}{n\E(Z_1(x))} \int_0^T \zeta_t \psi_y(Y_t) \left[\Delta_t(x) - \Delta_t(c_k)\right] dt  \nonumber \\
&+&\frac{1}{n\E(Z_1(x)) \E(Z_1(c_k))} \int_0^T \zeta_t \psi_y(Y_t) \Delta_t(c_k) \left[ \E(Z_1(c_k)) - \E(Z_1(x)) \right] dt \nonumber\\
&:=& \mathcal{I}_{T,1}(c_k,y) + \mathcal{I}_{T,2}(c_k,y).
\end{eqnarray}

\bigskip 
Making use of   the property of ergodicity, conditions $(U_1)$  and  $(U_2)$ and  the boundedness of $\zeta$, we get for $T$ sufficiently large and   any $(x, y)\in B(c_k, \epsilon)\times S$ that
\begin{eqnarray}\label{I1}
|\mathcal{I}_{T,1}(c_k,y)| 
&\leq& 
a_3 c_\zeta \frac{\epsilon}{h_T}\frac1{\E(\Delta_0(x)) } \frac1T \int_0^{T}  |\psi_y(Y_t)|dt  \nonumber\\
%
%
%
&\leq& \frac{a_3c_\zeta}{a_2}\frac{\epsilon}{h_T}
\OH_{a.s.} \sup_{y\in S} \E\left( |\psi_y(Y_0)|\right)
=\OH_{a.s.} \left(\frac{\epsilon}{h_T}\right).
\end{eqnarray}	

On the other hand, we have under the above conditions 
\begin{eqnarray}\label{I2}
&&|\mathcal{I}_{T,2}(c_k,y)| \leq  	
c_\zeta\frac{\E\left(|\Delta_0(c_k) - \Delta_0(x)|\right) }{\E(\Delta_0(x)) } \frac1{n\E(Z_1(c_k))}
\int_0^T | \psi_y(Y_t)| \Delta_t(c_k)  dt
\nonumber	\\
&\leq& \frac{c_\zeta a^2_3}{a^2_2} \frac{\epsilon}{h_T}   
\frac1T
\int_0^T | \psi_y(Y_t)|  dt=
\frac{c_\zeta a^2_3}{a^2_2} \frac{\epsilon}{h_T}\sup_{y\in S} \E\left( |\psi_y(Y_0)|\right)  
=\OH_{a.s.} \left(\frac{\epsilon}{h_T)}\right).
\end{eqnarray}	
\noindent 	The constants in the right hand side of (\ref{I1}) and (\ref{I2})
are independent of 	$x$ and $y$, thus 
\begin{eqnarray}\label{I3some}
{\cal H}_{T,1}=\sup_{y\in S}	\max_{1\leq k \leq N_\epsilon }\sup_{x\in B(c_k, \epsilon)}	|\mathcal{I}_{T,1}(c_k,y) + \mathcal{I}_{T,2}(c_k,y)|=
O_{a.s.}\left(\frac{\epsilon_T}{h_T }\right)
\end{eqnarray}
Similarly we get, under the above conditions,    the same  bound of the 
term $\mathcal{H}_{T,3}$,  . Using  condition (U4) we conclude, for $T$ sufficiently  large enough,    that 	
\begin{eqnarray}\label{I3max}
\mathcal{H}_{T,1}=  	\mathcal{H}_{T,3}=O_{a.s.}\left(\lambda_T \right)  \quad \text{with} \quad  \epsilon_T=\frac{ \log T}{T}
\end{eqnarray}
Consider now, 	 the intermediate  term $\mathcal{H}_{T,2}$. For this purpose,   cover $S$ with $\nu_T= [T^{\gamma}]+1$
intervals $I_{k\prime}=[\ell_n-y_{k^\prime}, \ \ell_n +y_{k^\prime}]$ of centre $y_{k^\prime} \in S$
and length $2\ell_T \leq c/\nu_T$, for some $\gamma>0$, such that  
$S\subset \cup_{k'=1}^{\nu_T}I_{k^\prime}$.  Then we have
\begin{eqnarray}
{\cal H}_{T,2}&=& \sup_{y\in S}
\max_{1\leq k \leq N_\epsilon}|\widehat{m}_{T,2}(c_k,y) - \overline{m}_{T,2}(c_k,y)| \nonumber\\
&\leq& 
\max_{1\leq k^\prime\leq \nu_n}\ \max_{1\leq k \leq N_\epsilon}\   \sup_{y_k \in I_{k^\prime}} \ 
|\widehat{m}_{T,2}(c_k,y) - \widehat{m}_{T,2}(c_k,y_{k^\prime})| \nonumber\\
&+& 
\max_{1\leq k^\prime\leq \nu_T}\ \max_{1\leq k \leq N_\epsilon}\   
|\widehat{m}_{T,2}(c_k,y_{k^\prime}) - \overline{m}_{T,2}(c_k,y_{k^\prime})| \nonumber\\
&+& 
\max_{1\leq k^\prime \leq \nu_T}\ \max_{1\leq k \leq N_\epsilon}\   \sup_{y_k \in I_{k^\prime}} \ 
|\overline{m}_{T,2}(c_k,y_{k^\prime}) - \overline{m}_{T,2}(c_k,y)| 
\nonumber\\
&:=& {\cal R}_{1,T}+{\cal R}_{2,T}+{\cal R}_{3,T}
\end{eqnarray}

Using  (U1)(ii), (U2)  and (U3) combined with the ergodic property  and because $\zeta$ is bounded, one may write 
\begin{eqnarray}\label{sup-y1}
|\widehat{m}_{T,2}(c_k,y) - \widehat{m}_{T,2}(c_k,y_{k^\prime})|
&=&[T\mathbb{E}(\Delta_0(c_k))]^{-1}\int_0^T |\zeta_t| \Delta_t(c_k)\left| \psi_y(Y_t)-
\psi_{y_{k^\prime}}(Y_t)\right|dt \nonumber\\
&\leq& c_\psi \frac{a_3}{a_2}\frac{1}{\nu_T^{\gamma}}\mathbb{E}(|\zeta_0|)
=\OH_{a.s.}(T^{-\gamma}).
%
%
\end{eqnarray}
The same bound may be obtained for the quantity 	$\overline{m}_{T,2}(c_k,y_{k^\prime}) - \overline{m}_{T,2}(c_k,y)$. 
Since the constants in the above terms are  independent of $c_k$ and $y_{k^\prime}$, we conclude  that
\begin{eqnarray}\label{R1-R3max}
{\cal R}_{1,T}={\cal R}_{3,T}=
O_{a.s}(\lambda_T)
\end{eqnarray}
because $\lim_{T \to +\infty}\frac1{\lambda_T \nu_T^{\gamma}}=0$ in view of condition (U4). 

We turn now to the  intermediate term ${\cal R}_{2,T}$. Using the fact that $\varphi_{\cal C}(\epsilon_n)=\log N_\epsilon$, $\nu_T=[T^\gamma]+1$, 
the statement  (\ref{Pointwise}) which still  true under conditions  
(A1), (U0)(i)-(iii), (A3)(i), (A3)(iii-iv) and (A3)(i$v^\prime$), we get for any  $\epsilon_0>0$, whenever  condition (\ref{cond}) and the assumption  (U4) are satisfied,  that 
\begin{eqnarray}\label{unifom.eq3}
\mathbb{P}({\cal R}_{2,T}>\lambda_T)=
& & \mathbb{P}\left( 	\max_{1\leq k^\prime\leq \nu_T}\ \max_{1\leq k \leq N_\epsilon}\   
|\widehat{m}_{T,2}(c_k,y_{k^\prime}) - \overline{m}_{T,2}(c_k,y_{k^\prime})|>\lambda_T\right)\nonumber\\
&\leq & \sum_{k^\prime=1}^{\nu_T}\sum_{k=1}^{N_\epsilon} \mathbb{P}\left( |\widehat{m}_{T,2}(c_k) - \overline{m}_{T,2}(c_k)| > \lambda_T \right) \nonumber\\
&\leq& 2 \nu_T N_{\epsilon_n} \exp\left\{ -c\epsilon_0^2\varphi_C(\epsilon_n)  \right\}
= 2 \nu_T N_{\epsilon_n}^{1-c\epsilon_0^2} \nonumber\\
&\leq & 2n^{\gamma} N_{\epsilon_n}^{1-c\epsilon_0^2}
%
%
%
\end{eqnarray}
because $T=n\delta$ and   $1\leq \delta<2$. 
Choosing $c\epsilon_0^2=\eta$ and considering condition (\ref{CEntropy}) which is equivalent to 
$$\sum_{n\geq 1} n^{\gamma} N_{\epsilon_n}^{ 1-\eta}<\infty \quad \text{for some} \  \eta>0$$
to conclude by Borel-Cantelli  Lemma the end of the proof of Lemma 
\ref{U1} $\hfil \Box$. 


\bigskip

{}


\end{document}